\documentclass[10pt]{amsart}
\usepackage{amsmath,amssymb}
\usepackage{bbm}
\usepackage{graphicx,tikz,mathtools}
\usepackage[left=2.65cm,right=2.65cm,top=2.7cm,bottom=2.7cm]{geometry}
\usepackage[pdftex]{hyperref}
\usepackage{cite}
\usepackage{mathrsfs} 
\usepackage{caption}
\hypersetup{
	colorlinks=true,
	linkcolor=blue, 
	citecolor=blue,
	filecolor=blue,
	urlcolor=blue,
}

\makeatletter
\@namedef{subjclassname@2020}{\textup{2020} Mathematics Subject Classification}
\makeatother

\newcommand{\bs}[1]{\boldsymbol{#1}}

\newtheorem{theorem}{Theorem}[section]
\newtheorem{proposition}[theorem]{Proposition}

\newtheorem{lemma}[theorem]{Lemma}
\newtheorem*{conjecture*}{Conjecture}


\newcommand{\triple}[1]{{\left\vert\kern-0.25ex\left\vert\kern-0.25ex\left\vert #1
        \right\vert\kern-0.25ex\right\vert\kern-0.25ex\right\vert}}

\theoremstyle{definition}

\newtheorem{remark}[theorem]{Remark}


\newcommand{\al}{\alpha}
\newcommand{\be}{\beta}
\newcommand{\de}{\delta}
\newcommand{\ep}{\varepsilon}

\newcommand{\ga}{\gamma}

\newcommand{\te}{\theta}

%

\newcommand{\Om}{\Omega}

%


%
%

%

%

%

\def\NN{\mathbb{N}}
\def\RR{\mathbb{R}}

\def\ZZ{\mathbb{Z}}

\def\bL{\bs{\mathcal{L}}}
\def\bE{\bs{\mathcal{E}}}
\def\bN{\bs{\mathcal{N}}}
\def\cc{\mathbf{c}}


%


\newcommand{\cE}{{\mathcal E}}

\newcommand{\cI}{{\mathcal I}}
\newcommand{\cK}{{\mathcal K}}
\newcommand{\cJ}{{\mathcal J}}
\newcommand{\cL}{{\mathcal L}}

\newcommand{\cN}{{\mathcal N}}

%

%

%

\newcommand{\pd}{\partial}
\newcommand\minus\backslash

\newcommand\lan\langle
\newcommand\ran\rangle

%


\newcommand{\Span}{\operatorname{span}}

\newcommand{\e}{{\mathrm e}}

\renewcommand\leq\leqslant
\renewcommand\geq\geqslant

%
\addtolength{\parskip}{3pt}

\newcommand\loc{_{\mathrm{loc}}}

\numberwithin{equation}{section}


\begin{document}
 
\title[Blowing-up solutions to competitive critical systems in dimension 3]{Blowing-up solutions to \\ competitive critical systems in dimension 3}

\author[A. J. Fern\'andez]{Antonio J.\ Fern\'andez}
\address{ \vspace{-0.4cm}
\newline 
\textbf{{\small Antonio J. Fern\'andez}} 
\vspace{0.15cm}
\newline \indent Departamento de Matem\'aticas, Universidad Aut\'onoma de Madrid, 28049 Madrid, Spain}
 \email{antonioj.fernandez@uam.es}

\author[M. Medina]{Maria Medina}
\address{ \vspace{-0.4cm}
\newline 
\textbf{{\small Maria Medina}} 
\vspace{0.15cm}
\newline \indent  
Departamento de Matem\'aticas, Universidad Aut\'onoma de Madrid, 28049 Madrid, Spain}
 \email{maria.medina@uam.es}
 
\author[A. Pistoia]{Angela Pistoia}
\address{ \vspace{-0.4cm}
\newline 
\textbf{{\small Angela Pistoia}} 
\vspace{0.15cm}
\newline \indent  
Dipartimento di Scienze di Base e Applicate per l'Ingegneria, Sapienza Universit\`a di Roma, 00161 Roma, Italy}
\email{angela.pistoia@uniroma1.it}

\keywords{elliptic systems, critical growth, non-segregated solutions, blow-up solutions.}

\subjclass[2020]{35J47, 35B33.}

%
%
\begin{abstract}  
We study the critical system of $m\geq 2$ equations
\begin{equation*} 
-\Delta u_i =  u_i^5+\sum_{j = 1,\,j\neq i}^m \beta_{ij}  u_i^2  u_j^3\,, \quad u_i \gneqq 0 \quad \textup{in } \RR^3\,, \quad  i \in \{1, \ldots, m\}\,,
\end{equation*}
where $\beta_{\kappa\ell} =\alpha\in\RR$ if $\kappa\neq\ell$, and $\beta_{\ell m}=\beta_{m \kappa} =\beta<0$, for $ \kappa, \ell \in \{1,\ldots, m-1\}$. We construct solutions to this system in the case where $\beta\to-\infty$ by means of a Ljapunov-Schmidt reduction argument. This allows us to identify the explicit form of the solution at main order:  $u_1$ will look like a perturbation of the standard radial positive solution to the Yamabe equation, while $u_2$ will blow-up at the $k$ vertices of a regular planar polygon. The solutions to the other equations will replicate the blowing-up structure under an appropriate rotation that ensures $u_i\neq u_j$ for $i\neq j$. The result provides the first almost-explicit example of non-synchronized solutions to competitive critical systems in dimension 3. \vspace{-0.5cm}
\end{abstract}

\maketitle
 
\section{Introduction} \label{S.Introduction}
Let us consider the \textit{critical} system
\begin{equation} \label{E.systemGeneralIntro}
-\Delta u_i =  \sum_{j = 1}^m \beta_{ij}  u_i^2  u_j^3\,, \quad u_i \gneqq 0 \quad \textup{in } \RR^3\,, \quad  i \in \{1, \ldots, m\}\,,
\end{equation}
where $m \geq 2$ is an integer and $\be_{ij} \in \RR$ for all $i,j \in \{1, \ldots,m\}$. This kind of systems naturally arise when looking for standing waves in critical systems of Schr\"odinger equations, see
 e.g. \cite{AA1999,T1998}. From the physical point of view, $\be_{ii}$ discriminates between focusing and defocusing behavior of a single component $u_i$, and $\be_{ij}$, $i \neq j$, describes the interspecies forces between particles of different components $u_i$ and $u_j$. If $\be_{ij} < 0$ the particles are in competition and repel each other, and if $\be_{ij} > 0$ they cooperate and attract each other. 

In this paper we are concerned with the focusing case, namely $\be_{ii} > 0$. 
In some cases, it is straightforward to find solutions to this system. Indeed, let us introduce the function
\begin{equation} \label{E.bubbleIntro}
U(x) := \frac{3^\frac14}{(1+|x|^2)^\frac12}\,,
\end{equation}
and recall that all the \textit{positive} solutions to the\textit{ Yamabe} equation
\begin{equation} \label{E.YamabeIntro}
-\Delta u = u^5 \quad \textup{in } \RR^3\,,
\end{equation}
are given by 
\begin{equation} \label{bub}
U_{\de,\xi}(x) := \frac{1}{\sqrt{\de}}\, U \Big( \frac{x-\xi}{\de} \Big) = \frac{3^\frac14 \sqrt{\de}}{(\de^2 + |x-\xi|^2)^\frac12}\,, \quad \textup{for } \de > 0\,,\ \xi \in \RR^3\,.
\end{equation}
Then, setting $u_i := s_i\, U_{\de,\xi}$ for some $\de > 0$ and $\xi \in \RR^3$, system \eqref{E.systemGeneralIntro} reduces to the algebraic system
$$
s_i = \sum_{j=1}^m \be_{ij} s_i^2 s_j^3\,, \quad s_i > 0\,, \quad  i \in \{1, \ldots,m\}\,.
$$
Whenever one is able to solve this algebraic system, one can then obtain solutions to \eqref{E.systemGeneralIntro}. We refer to this kind of solutions as \textit{synchronized} solutions. We are interested in the existence (or not) of finite energy \textit{non--synchronized} solutions to \eqref{E.systemGeneralIntro}. 

Let us turn first to the case where $m = 2$. Assuming $\be_{11} = \be_{22} = 1$ and $\be_{12} = \be_{21} = \be \in \RR$, system \eqref{E.systemGeneralIntro} reduces to
\begin{equation} \label{2m}
\left\{
\begin{aligned}
-\Delta u  & =  u^5 + \beta u^2 v^3 \quad &&\textup{in } \RR^3\,, \\
-\Delta v & =   v^5 + \beta u^3 v^2 \quad &&\textup{in } \RR^3\,, 
\end{aligned}
\right. \qquad u, v \gneqq 0 \quad \textup{in } \RR^3\,.
\end{equation}
In the attractive regime, i.e. when $\beta>0$, Guo and Liu   proved in \cite[Theorem 3.2]{guoliu2008} that any solution to \eqref{2m} is a synchronized solution. In the repulsive regime, i.e. when $\beta<0$, the situation is however different and non-synchronized solutions do exist. In this case, there is a strong connection between solutions to \eqref{2m} and sign-changing solutions to \eqref{E.YamabeIntro}. 
This was first pointed out in a series of papers by Terracini and her collaborators \cite{CTV2002,NTTV2010,STTZ2016,TT2012,TTVW2011}.   In particular, in  \cite[Theorem 1.5]{STTZ2016}, the authors proved that, if $(u_\beta,v_\beta)_\be$ is a family of solutions to \eqref{2m} uniformly bounded in $L^\infty(\mathbb R^3)$, then a segregation phenomenon happens. More precisely, for all $\al \in (0,1)$, it follows that, up to a subsequence,
$$
u_\beta\to w_+\quad  \hbox{and}\quad v_\beta\to w_-\quad \hbox{in } H^1_{loc}(\mathbb R^3)\cap C^{0,\alpha}_{loc}(\mathbb R^3)\,, \quad \textup{as } \be \to - \infty\,.
$$
Here, $w$ is a  sign-changing solution to \eqref{E.YamabeIntro} and, for all $s \in \RR$, $s_+ := \max\{s,0\}$ and $s_- := \max\{-s,0\}$. Having in mind this connection, it is natural to wonder if, whenever there exists a sign-changing solution $w$ to \eqref{E.YamabeIntro}, it is possible to find a solution to \eqref{2m} whose components resemble the positive and negative part of $w$, as $\beta\to-\infty.$
The first result in this direction was obtained by Clapp and Pistoia \cite{CP2018} using variational methods. More precisely, the authors adapted an argument by Ding \cite{D1986}, where he established the existence of infinitely many sign-changing solutions to \eqref{E.YamabeIntro}, and proved the existence of solutions to \eqref{2m} exhibiting a segregation phenomenon as $\be \to - \infty$. Let us point out that, aside from Ding's sign-changing solutions to \eqref{E.YamabeIntro}, there are many others built using perturbative methods \cite{medinamusso2021, delpimupapi2011}. At main order, the sign-changing solutions to \eqref{E.YamabeIntro} constructed by del Pino et al. \cite{delpimupapi2011} are given by
$$
w_k = U - \sum_{j=1}^k U_{\de_k,\, \xi_{j,k}}\,,
$$
where $k$ is a sufficiently large integer, the points $\xi_{j,k}$ are the vertices of a regular polygon placed on a great circle on the $(x_1,x_2)$-plane, and $\de_k \to 0$ as $k \to \infty$. It seems then natural to attempt a similar construction in order to build solutions to \eqref{2m}. More precisely, one may try to construct solutions to \eqref{2m} whose components resemble respectively $U$ and $\sum_{j} U_{\de_k,\,\xi_{j,k}}$. A first attempt in this direction is due to Guo et al. \cite{GuoLiWei2014}. Unfortunately, the proof contains a gap, and their construction does not seem to be feasible. In \cite[Proposition 2.3]{GuoLiWei2014}, the authors claim to be able to invert the corresponding linearized operator (the analogue to Proposition \ref{P.linear} in this work) as a consequence of the symmetries of the construction, without the need of projecting onto any space. However, this is not the case. The chain of identities in \cite[Equation (2.25)]{GuoLiWei2014} does not hold, and they cannot conclude the proof of \cite[Proposition 2.3]{GuoLiWei2014}. 

Concerning the case where $m \geq 3$, there are very few results in the literature. The first in this direction was obtained by Clapp et al. \cite{CSS2021}. There, the authors were able to generalize the results obtained by Clapp and Pistoia \cite{CP2018} to systems with $m \geq 2$ components. Let us also mention the very recent result by Clapp et al. \cite{CPT2021} for systems of Yamabe type equations on closed Riemannian manifolds.  In both papers \cite{CSS2021, CPT2021} the solutions do exhibit a segregation phenomenon as the competition parameters go to $- \infty$. 

In this paper we construct a new type of solutions to \eqref{E.systemGeneralIntro} which are non-synchronized and  whose components do not exhibit a segregation phenomenon in a large repulsive regime. Indeed, let us denote by
\begin{equation}\label{E.rotmatrix}
 {\mathscr R}_\theta:=\left(\begin{matrix}\cos\theta&-\sin\theta&0\\
\sin\theta&\cos\theta&0\\
0&0&1 \\
 \end{matrix}\right),\quad \theta\in \RR\,,
\end{equation}
the rotation matrices on the $(x_1,x_2)$-plane.  Also note that, as usual, the space  $\dot{H}^1 := \dot{H}^1(\RR^3)$ denotes the completion of $C_c^{\infty}(\RR^3)$ with respect to the norm  
$$
\|\phi\| := \left(\int_{\RR^3} |\nabla\phi|^2 dx\right)^{\frac12}\,, \quad \textup{for all } \phi \in \dot{H}^1\,.
$$
We can prove the following:

\begin{theorem}\label{T.mainResultIntro} 
Assume that
\begin{equation}\label{E.beta} \beta_{ii}= \be_{mm} = 1,\quad
\beta_{ij} =\alpha\quad \hbox{and}\quad
\beta_{\ell m}=\beta_{m \kappa} =\beta\quad  \hbox{for }\ i,j,\kappa, \ell \in \{1,\ldots, m-1\}\,, \ i \neq j\,,
\end{equation}
for certain $\alpha,\beta\in \mathbb R$ with $\beta<0$. Then, for any fixed integer $k \geq 2$, there exists $\beta_\star < -\sqrt{2}$ such that, for each $\beta \in (-\infty,\beta_\star]$,  \eqref{E.systemGeneralIntro} has a finite energy solution $(u_{1,\beta},u_{2,\beta},\ldots,u_{m,\beta})$ of the form
$$
u_{1,\beta} = U + \phi_\beta \,, \quad u_{2,\beta} = \sum_{j=1}^k \frac{1}{\tilde{\de}_\be^\frac12}\, U \Big( \frac{\,\cdot\,-\xi_{\be,j}}{\tilde{\de}_\beta} \Big) + \psi_\beta\,,\quad u_{i,\beta}=u_{2,\beta}\Big({\mathscr R}_{\frac{(i-2)}{m-1}\frac{2\pi}{k}} \cdot \Big) \quad \textup{for } i \in \{3,\ldots,m\}\,.
$$
Here, $U$ is defined in \eqref{E.bubbleIntro}, ${\mathscr R}_{\frac{(i-2)}{m-1}\frac{2\pi}{k}}$ is given by \eqref{E.rotmatrix}, $\phi_\beta, \psi_\beta \in \dot{H}^1$ and $(\xi_{\beta,j})_j \subset \RR^3$ satisfy
$$
\|\phi_\beta\|_{\dot{H}^1} + \|\psi_\beta\|_{\dot{H}^1} \to 0 \quad \textup{ and } \quad  \xi_{\beta,j} \to  \Big( \e^{\frac{2\pi(j-1)}{k}i}, 0 \Big)\,,   \quad \textup{as } \beta \to - \infty\,,
$$
and $\tilde{\de}_\beta \in (0,1/\e^2)$ satisfies 
$$
\tilde{\de}_\be^{\frac12} |\log \tilde{\de}_\be| \sim -\frac{1}{\beta}\,.
$$
\end{theorem}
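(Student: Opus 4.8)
The plan is to prove Theorem \ref{T.mainResultIntro} by a finite-dimensional Ljapunov--Schmidt reduction, treating $\beta$ (more precisely $-1/\beta$) as the small parameter. First I would fix the symmetry framework: work in the subspace of $\dot H^1(\RR^3)^m$ consisting of tuples invariant under the dihedral action of the regular $k$-gon on the $(x_1,x_2)$-plane (rotations by $2\pi/k$ and the reflection fixing the great circle), together with the constraint $u_i = u_2(\mathscr R_{\frac{(i-2)}{m-1}\frac{2\pi}{k}}\,\cdot)$ that ties the $i$-th component to the second; this reduces the unknowns to the pair $(u_1,u_2)$ and ensures automatically that $u_i \neq u_j$ for $i\neq j$. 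Working equivariantly is what makes the reduction finite-dimensional: the concentration parameters left free will be only the common scaling $\tilde\delta_\beta$ of the $k$ bubbles forming $u_2$ and the radius of the polygon (the position $|\xi_{\beta,j}|$), while the center of $U$ is pinned at the origin and $U$'s scaling is pinned at $1$ by the symmetry and the leading-order balance.

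Next I would set up the ansatz $u_{1}=U+\phi$, $u_{2}=\sum_{j=1}^k U_{\tilde\delta,\xi_j}+\psi$ with $\phi,\psi$ in the orthogonal complement (inside the symmetric space) of the approximate kernel spanned by the relevant $\partial_\delta$ and $\partial_{|\xi|}$ derivatives of the bubbles, and plug into \eqref{E.systemGeneralIntro}. The competitive cross terms $\beta u_1^2 u_2^3$ and $\beta u_1^3 u_2^2$ are where the parameter $\beta\to-\infty$ enters: since the $k$ bubbles in $u_2$ concentrate ($\tilde\delta_\beta\to 0$) while $U$ stays of order one, the product of the two profiles is small, of size controlled by a power of $\tilde\delta_\beta$ times $|\log\tilde\delta_\beta|$, and it is precisely the balance of this interaction energy against the bubble-interaction energy of the del Pino--Musso--Pacard configuration that forces the scaling law $\tilde\delta_\beta^{1/2}|\log\tilde\delta_\beta|\sim -1/\beta$. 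I would first estimate the error of the ansatz in the dual norm, then invoke the linear theory. The key technical input is the invertibility, uniformly in $\beta$, of the linearized operator around the ansatz on the symmetric orthogonal complement — this is exactly Proposition \ref{P.linear}; here one must be careful \emph{not} to repeat the error of \cite{GuoLiWei2014}, i.e.\ one genuinely does need to project out the approximate kernel, and the invertibility estimate will carry a loss of a power of $|\log\tilde\delta_\beta|$ that has to be tracked. With the uniform linear estimate in hand, a contraction mapping argument produces $\phi=\phi_\beta(\tilde\delta,|\xi|)$, $\psi=\psi_\beta(\tilde\delta,|\xi|)$ solving the system up to a Lagrange-multiplier combination of the kernel elements, with the claimed smallness $\|\phi_\beta\|+\|\psi_\beta\|\to 0$.

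Then I would carry out the reduced (bifurcation) equation. Expanding the energy functional restricted to the two-parameter family, the reduced energy $\mathcal J_\beta(\tilde\delta,|\xi|)$ splits, at main order, into: (a) the del Pino--Musso--Pacard-type expansion coming from $u_2$ alone — a sum of self-energies of the $k$ bubbles, minus the positive polygon interaction term of order $\tilde\delta/\,$(something) balanced against the $U$-bubble interaction term of order $\tilde\delta^{1/2}$, which as $k\to\infty$ selects the geometry; plus (b) the competitive coupling contribution of order $|\beta|\,\tilde\delta^{1/2}|\log\tilde\delta|$ coming from $\int U^3 (\sum_j U_{\tilde\delta,\xi_j})^2$ and its symmetric counterpart; plus the negligible $\alpha$-terms, which only shift lower-order constants since the components $u_3,\dots,u_m$ are rigid rotations of $u_2$ and their mutual overlaps are of the same small order. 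Setting $\partial_{\tilde\delta}\mathcal J_\beta = \partial_{|\xi|}\mathcal J_\beta = 0$ and using the implicit function theorem around the del Pino--Musso--Pacard critical configuration — which is nondegenerate in the relevant symmetric class — gives a critical point $(\tilde\delta_\beta,|\xi_\beta|)$ with $|\xi_\beta|\to 1$ and with $\tilde\delta_\beta$ determined, to leading order, by equating the $\tilde\delta^{1/2}$-type terms with the $|\beta|^{-1}$ scaling, yielding $\tilde\delta_\beta^{1/2}|\log\tilde\delta_\beta|\sim -1/\beta$ and in particular $\tilde\delta_\beta\in(0,e^{-2})$ for $|\beta|$ large, i.e.\ $\beta\le\beta_\star$ for some $\beta_\star<-\sqrt2$. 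Finiteness of the energy and $u_{i,\beta}\gneqq 0$ follow from the explicit form of the ansatz and the smallness of the remainders.

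The main obstacle, as in all such constructions and as the critique of \cite{GuoLiWei2014} in the introduction emphasizes, is the \emph{uniform invertibility of the linearized operator} (Proposition \ref{P.linear}): the approximate kernel is genuinely $\tilde\delta$-dependent and must be projected out, the cross-coupling terms mix the two components in a way that is not positive, and one must obtain the $\dot H^1$-estimate with a controlled (logarithmic) loss so that the subsequent contraction and the reduced equation still close. A secondary difficulty is bookkeeping the numerous interaction integrals — self-interaction of each bubble, bubble-bubble interactions within the polygon, $U$-bubble interactions, and the cross-species overlaps among $u_2,\dots,u_m$ — with enough precision to see that only the three competing orders above survive and that the $\alpha$-dependent terms are subcritical, so that the geometry and the scaling law are determined independently of $\alpha$.
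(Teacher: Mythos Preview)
Your overall strategy—Ljapunov--Schmidt reduction in a symmetry class, linear invertibility on the orthogonal complement of an approximate kernel, contraction for the remainder, then a finite-dimensional reduced problem—is the right one and matches the paper's architecture. However, there is a genuine gap in the linear theory and a conceptual confusion about the role of $k$.

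\textbf{The kernel of $\cL_1$.} You project out only the $\partial_\delta$ and $\partial_{|\xi|}$ derivatives of the \emph{bubbles}, claiming that ``$U$'s scaling is pinned at $1$ by the symmetry and the leading-order balance.'' But the dihedral symmetries you impose (rotations by $2\pi/k$ and reflections) do \emph{not} kill the scaling direction $Z^{(0)}(x)=3^{1/4}(|x|^2-1)/(2(1+|x|^2)^{3/2})$: this function is radial, hence invariant under every rotation and reflection. So $\cL_1^*\phi=\phi-(-\Delta)^{-1}(5U^4\phi)$ has a nontrivial kernel in your symmetric space, and the invertibility you need for the first component fails. The paper resolves this by adding \emph{Kelvin invariance} $\phi(x)=|x|^{-1}\phi(x/|x|^2)$ to the symmetry class $X_k$; since $Z^{(0)}$ is \emph{anti}-invariant under the Kelvin transform, this removes it from the space and makes $\cL_1^*$ genuinely invertible on $X_k$ with no projection needed in the first component. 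The same device simultaneously ties the polygon radius to the scaling via $|\xi_{t,j}|=\sqrt{1-t^2\delta_\beta^2}$, so the reduction becomes \emph{one}-dimensional (a single parameter $t$), not two-dimensional, and in the second component one projects only onto $Z_{k,t}$. In Step~3 of the proof of Proposition~\ref{P.linear}, Kelvin invariance is used once more to eliminate the $Z^{(1)}$ direction for the rescaled limit $\widetilde\psi_n$. Without Kelvin your plan either needs a third free parameter (the scale of $U$) and a corresponding third Lagrange multiplier, or an additional argument (e.g.\ via the global scale invariance of the system) showing that this extra multiplier vanishes; neither is in your outline.

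\textbf{$k$ is fixed, not large.} You write that the del Pino--Musso--Pacard balance ``as $k\to\infty$ selects the geometry.'' Here $k\ge 2$ is a \emph{fixed} integer and the asymptotic parameter is $\beta\to-\infty$; there is no $k\to\infty$ limit anywhere in the argument. The reduced function, at order $\delta_\beta$, is $g(t)=-\mathtt c_1 t+\mathtt c_2 t^{3/2}$ (Proposition~\ref{P.expansion2}): the first term comes from the bubble--bubble interactions inside $V$, and the second from the $\beta$-coupling $\int U^3V^3\sim |\beta|\,\delta_\beta^{3/2}|\log\delta_\beta|=\delta_\beta\, t^{3/2}$ after imposing $\delta_\beta^{1/2}|\log\delta_\beta|=-1/\beta$. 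There is no separate ``$U$--bubble interaction of order $\tilde\delta^{1/2}$'' coming from $u_2$ alone, because $U$ and the bubbles live in different components and interact only through the coupling. The critical point $t_\star=(2\mathtt c_1/3\mathtt c_2)^2$ is nondegenerate for every fixed $k$, and this—not any dPMP nondegeneracy statement—is what closes the reduced problem.

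A smaller remark: for $m\ge 3$ the paper first reduces to the two-component \emph{nonlocal} system~\eqref{2222}, which is not itself variational, and therefore solves the reduced equation by the direct method (testing against $Z_{k,t}$, Proposition~\ref{P.reduction3}) rather than via an energy expansion. Your choice to work with the variational structure of the full $m$-component system is legitimate and would also work; that is a harmless difference in route, not a gap.
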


It is worth pointing out that the solutions constructed in Theorem \ref{T.mainResultIntro} are not uniformly bounded as $\be \to -\infty$. Indeed, the $i-$th components for any $i \in \{2,\dots,m\}$ blow-up at the $k$ vertices of a regular planar polygon. Even more, these solutions do not exhibit a segregation phenomenon as $\be \to -\infty$. In fact, the limiting profile of the first component is $U$ (see \eqref{bub}), which is positive everywhere, and the limiting profile of all the other components is a sum of Dirac delta masses at the $k$ vertices of a regular planar polygon. This shows that the segregation result \cite[Theorem 1.5]{STTZ2016} is somewhat sharp. 

Let us also emphasize the fact that the presence of the parameter $\beta$ is what allows us to keep $k$ fixed. As it will be clear, the strategy used in the manuscript relies on a reduction procedure, where an equlibrium must be found. This is done by adjusting a free parameter, which happens to be $\beta$. This is in big contrast with the case of the single equation, treated in \cite{delpimupapi2011, delpimupapi2013}. There, the absence of a natural parameter in the problem forces the authors to {\it artificially} introduce one, which is the number of peaks $k$.  It is worth noting  that the use of the number of peaks as a parameter, first introduced in \cite{weiyan2010}, has in fact been a winning choice in several problems. 


\subsection{Strategy of the proof} Theorem \ref{T.mainResultIntro} relies on a classical Ljapunov-Schmidt reduction argument. For simplicity, we first discuss the proof in the case where $m = 2$ and then explain how to reduce the general case $m \geq 3$ to a different \textit{nonlocal} system of two components, which can be treated in a similar fashion.

We start of with an approximated solution of the form
\begin{equation}\label{E.V}
(U,V) = \bigg( U,\ \sum_{j=1}^k U_{\tilde{\de}_\be, \xi_{\be,j}} \bigg)\,,
\end{equation}
where $k \geq 2$ is a fixed integer and the coupling parameter $\be \to - \infty$ is a free term in the reduction process. The gist of the proof is to show that, for some $\tilde{\de}_\be$, there exists a true solution to \eqref{2m} of the form
$$
(u,v) = (U + \phi_\be,V+\psi_\be)
$$
with $\phi_\be, \psi_\be \to 0$ in $\dot{H}^1$, as $\be \to - \infty$. We will address the problem by linearizing around $(U,V)$ and solving the corresponding system for $(\phi_\be, \psi_\be)$. It turns out that this is possible if the concentration parameter $\tilde{\de}_\be$ is suitably chosen. 

The choice of $\tilde{\de}_\be$ becomes apparent in the study of the reduced energy \eqref{E.Fbeta}. At main order, one needs to balance the interaction among $U$ and $V$, which is of order
$
-\be \tilde{\de}_\be^{\frac32} |\log \tilde{\de}_\be|\,,
$
and the interaction among the different bubbles in $V$, which is of order
$-\tilde{\de}_\be\,.$ 
This balance provides the rate
$$
\tilde{\de}_\be^{\frac12} |\log \tilde{\de}_\be| \sim -\frac{1}{\beta}\,,
$$
which is rather slow, but sufficient for our purposes. This is something special of the three-dimensional case we are dealing with. 

It is folklore that the existence of blowing-up solutions in critical problems is strongly affected by the dimension. For instance, given a smooth bounded domain $\Om \subset \RR^n$, $n \geq 4$, and a parameter $\ep > 0$, the celebrated Brezis-Nirenberg problem \cite{BN}
\begin{equation}\label{bn}
-\Delta u=|u|^{4\over n-2}u+\varepsilon u\quad \hbox{in } \Omega\,,\quad u=0\quad \hbox{on } \partial\Omega\,,
\end{equation}
possesses positive and sign-changing solutions which blow-up as $\varepsilon\to0^+$ (see e.g. \cite{Rey,MP2002,MiP2004}).
On the other hand, if $n=3$ and $\Om$ is star-shaped, the problem does not have any positive solutions when $\varepsilon>0$ is small enough \cite{BN}. Moreover, the existence (or not) of sign-changing solutions to \eqref{bn} with $\ep > 0$ small is a challenging open problem\footnote{\cite{WeiBN} was finished after the completion of this manuscript. In \cite{WeiBN} the authors address precisely the Brezis-Nirenberg problem in dimension $n = 3$.} in this case.  

To elucidate more the role of the dimension in our construction, let us compare our result with the one by Chen et al. \cite{ChMePi2023}. To that end, let us consider the critical system in general dimension, which reads as:
\begin{equation} \label{4D}
-\Delta u_i =  \sum_{j = 1}^m \beta_{ij}  u_i^{\frac{2}{n-2}}  u_j^{\frac{n}{n-2}}\,, \quad u_i \gneqq 0 \quad \textup{in } \RR^n\,, \quad  i \in \{1, \ldots, m\}.
\end{equation}
First, let us stress that, when all the coupling parameters are negative, solutions exhibiting segregation were found in \cite{CSS2021,CP2018} using variational methods. On the other hand, for $n = 4$, Druet and Hebey  proved in \cite[Proposition 3.1]{DH2009} that any non-trivial solution to \eqref{4D} is a synchronized solution if all the $\beta_{ij}$ are positive and equal. Here, we want to address the attention to the construction in \cite{ChMePi2023}, that considers again the case $n=4$. There, the authors built solutions to \eqref{4D} in a small competitive regime using a Ljapunov-Schmidt reduction argument. More precisely, assuming \eqref{E.beta}, the authors built solutions to \eqref{4D} for $\be < 0$ with $|\beta|$ sufficiently small. In the case where $m=2$, the first component of their solutions looks like $U$ and the second one like $V$ (see \eqref{E.V}), so our ansatz is very similar to theirs. However, in our case $\be \to - \infty$, while in theirs $\be \to 0^{-}$. If one tries to mimic our computations in the four-dimensional case when $\be \to - \infty$, they would obtain a rate $|\log \tilde{\de}_\be| \sim -\frac{1}{\be}$, which is too slow for the reduction method to succeed. This striking difference is a purely dimensional phenomenon, which highlights one more time the role of the dimension in the existence of blowing-up solutions in critical problems. Actually, the same is expected to hold in dimensions $n\geq 5$. Formal computations show that in high dimensions the reduction process also forces $\beta \to 0^-$. However, this case is tricky because of the nature of the interaction term. The first power becomes sublinear, and hence no longer contractive. This prevents us to apply fixed point-type arguments in a direct way as in the cases $n=3,4$. The problem is being treated in an ongoing work, where we expect to overcome this technical difficulty. 

We now explain how to reduce the case $m \geq 3$ to a \textit{nonlocal} system with only two components. Let $ {\mathscr R}_\theta$ with $\theta\in\RR$ be given in \eqref{E.rotmatrix} and $r,k\in\NN$.  Denote $q=m-1$. We set
\begin{equation}\label{E.rotationIntro}
{\mathscr R}_{r,k}:={\mathscr R}_{\frac{(r-1)}{q}\frac{2\pi}{k}}\,,
\end{equation}
and consider the \textit{nonlocal} system 
\begin{equation}\label{2222Intro}
\left\{
\begin{aligned}
&-\Delta u=u^5+\beta u^2 v^3 + \be u^2  \sum\limits_{r=2}^qv_r^3\quad && \hbox{in}\ \mathbb R^3\,,\\
&-\Delta v=v^5+\beta u^3 v^2+\alpha v^2 \sum\limits_{r=2}^qv_r^3\quad && \hbox{in}\ \mathbb R^3\,,
\end{aligned}
\right. \qquad \qquad u, v \gneqq 0 \quad \textup{in } \RR^3\,,
\end{equation}
where 
\begin{align*} 
v_r(x):=v({\mathscr R}_{r,k} x) \quad \hbox{for } r \in \{1,\dots,q\}\,.
\end{align*}
Then, using \eqref{E.beta}, one can check that, if $(u,v)$ is a solution to \eqref{2222Intro} and satisfies 
\begin{equation*} 
u(x)=u({\mathscr R}_{r,k}x)\quad \hbox{for all } r \in \{2,\dots,q\}\,, \quad  \hbox{and}\quad  v(x)=v(\mathscr{R}_{\frac{2\pi}{k}}x)\,,
\end{equation*}
the vector $(u_1,\dots, u_{q+1})$, with 
\begin{align*}
u_i(x)=v({\mathscr R}_{i,k} x)\quad \hbox{for all}\ i\in \{1,\dots,q\}\,, \quad \hbox{and}\quad u_{q+1}(x)=u(x)\,, 
\end{align*}
is a solution to \eqref{E.systemGeneralIntro}. We refer to Sect. \ref{S.generalcase} for more details. Let us stress that the assumption \eqref{E.beta} is crucial in the reduction. If we remove it, the reduction cannot be done. The key fact here is that the nonlocal terms appearing in \eqref{2222Intro} are of lower order with respect to the ones already present in \eqref{2m}. Using the symmetries above, we can then deal with \eqref{2222Intro} in a similar fashion as we did with \eqref{E.systemGeneralIntro}. In particular, this allows the parameter $\al \in \RR$ to be freely chosen. 
 
This reduction is inspired by the one in \cite[Sect. 3]{ChMePi2023}. However, the symmetries we are considering are different from the ones there. In particular, we do not need to impose that $k$ is even as they did. Actually, we strongly believe that our approach can lead to the construction of two new different families of solutions in the four-dimensional case when $\be \to 0^-$. Moreover, this would allow to treat the case where $k$ is odd and would substantially simplify the proofs in \cite{ChMePi2023}.

\subsection{Organization of the paper} In Section \ref{S.m=2} we consider the case of two equations; i.e., we prove Theorem \ref{T.mainResultIntro} when $m=2$, following a Ljapunov-Schmidt reduction structure. In Section \ref{S.generalcase} we prove the theorem in the general case $m\geq 3$. The first part of the section is devoted to reduce the system of $m$ equations to a new one of only two (see \eqref{2222Intro}), and the second part to the solvability of this new system.  

\section{The case $m = 2$} \label{S.m=2}
In this section we will prove Theorem \ref{T.mainResultIntro} in the case of two equations. Taking into account the assumptions there, \eqref{E.systemGeneralIntro} with $m = 2$ reduces to
\begin{equation} \label{E.system2eq}
\left\{
\begin{aligned}
-\Delta u & = u^5 + \beta u^2 v^3 \quad &&\textup{in } \RR^3\,, \\
-\Delta v & = v^5 + \beta u^3 v^2 \quad &&\textup{in } \RR^3\,, 
\end{aligned}
\right. \qquad u, v \gneqq 0 \quad \textup{in } \RR^3\,.
\end{equation}
Hence we can reformulate Theorem \ref{T.mainResultIntro} as follows:

\begin{theorem}[The case $m = 2$]  \label{T.mainResult2eq}
For any fixed integer $k \geq 2$, there exists $\beta_\star < -\sqrt{2}$ such that, for each $\beta \in (-\infty,\beta_\star]$,  \eqref{E.system2eq} has a finite energy solution $(u_\beta,v_\beta)$ of the form
$$
u_\beta = U + \phi_\beta \,, \quad v_\beta = \sum_{j=1}^k \frac{1}{\tilde{\de}_\be^\frac12}\, U \Big( \frac{\,\cdot\,-\xi_{\be,j}}{\tilde{\de}_\beta} \Big) + \psi_\beta\,.
$$
Here, $\phi_\beta, \psi_\beta \in \dot{H}^1$ and $(\xi_{\beta,j})_j \subset \RR^3$ satisfy
$$
\|\phi_\beta\|_{\dot{H}^1} + \|\psi_\beta\|_{\dot{H}^1} \to 0 \quad \textup{ and } \quad  \xi_{\beta,j} \to  \Big( \e^{\frac{2\pi(j-1)}{k}i}, 0 \Big)\,,   \quad \textup{as } \beta \to - \infty\,,
$$
and $\tilde{\de}_\beta \in (0,1/\e^2)$ satisfies 
$$
\tilde{\de}_\be^{\frac12} |\log \tilde{\de}_\be| \sim -\frac{1}{\beta}\,.
$$
\end{theorem}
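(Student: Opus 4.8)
The plan is to carry out a Ljapunov--Schmidt reduction around the approximate solution $(U,V)$ of \eqref{E.V}, where $V=\sum_{j=1}^k U_{\tilde\de_\be,\xi_{\be,j}}$ and the $\xi_{\be,j}$ are placed at the vertices of a regular $k$-gon on the unit circle in the $(x_1,x_2)$-plane. We look for a genuine solution of the form $(u,v)=(U+\phi_\be,V+\psi_\be)$ with $(\phi_\be,\psi_\be)$ in a suitable subspace of $\dot H^1\times\dot H^1$. The first step is to set up the functional-analytic framework: writing $\Delta$ in terms of the Riesz potential, recast \eqref{E.system2eq} as a fixed-point problem, decompose the error into a part in the kernel of the linearized operator and a part orthogonal to it, and exploit the natural symmetries of the ansatz (invariance under the rotation $\mathscr R_{2\pi/k}$ and under reflections fixing the polygon) to cut down the kernel. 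In this symmetric class the only nontrivial kernel directions come from the dilation parameter of the $k$ bubbles of $V$ and, possibly, the dilation of $U$; the translation directions are killed by symmetry once the centers are frozen on the circle of radius $1$.

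The core analytic input is the invertibility of the linearized operator, which is exactly Proposition~\ref{P.linear} in this work (the statement that \cite{GuoLiWei2014} got wrong): one shows that, restricted to the symmetric subspace and after projecting out the remaining kernel directions, the linearization of \eqref{E.system2eq} at $(U,V)$ is invertible with operator norm bounded uniformly in $\be$, for $|\be|$ large. The proof of this is the standard contradiction-and-blow-up argument: if not, there is a sequence of normalized elements of the kernel of the projected operator with $\be\to-\infty$; one analyzes its weak limits near each concentration point (rescaling around $\xi_{\be,j}$ at scale $\tilde\de_\be$, and looking at $U$ directly), uses the nondegeneracy of the Yamabe bubble (all of whose kernel is spanned by the dilation and translation generators) together with the orthogonality conditions to conclude the limit is zero, and then recovers a contradiction with the normalization. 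The coupling term $\be u^2v^3$ enters here and one must check that, despite $\be\to-\infty$, the interaction contributes at lower order thanks to the smallness of $\tilde\de_\be$ — this is where the three-dimensional decay rates of $U$ are used, and where the precise scaling $\tilde\de_\be^{1/2}|\log\tilde\de_\be|\sim -1/\be$ first becomes relevant.

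Once the linear theory is in place, a contraction mapping argument gives, for each admissible $\tilde\de_\be$, a unique small solution $(\phi_\be,\psi_\be)=(\phi_\be(\tilde\de_\be),\psi_\be(\tilde\de_\be))$ of the projected (auxiliary) equation, together with quantitative estimates $\|\phi_\be\|+\|\psi_\be\|\lesssim \|E_\be\|$, where $E_\be$ is the error of the ansatz; a careful expansion of $E_\be$ shows $\|\phi_\be\|+\|\psi_\be\|\to 0$ as $\be\to-\infty$. It remains to solve the finite-dimensional (bifurcation) equation, i.e. to choose $\tilde\de_\be$ so that the projections onto the leftover kernel directions vanish. Following the standard variational reduction, this is equivalent to finding a critical point of the reduced energy $\be\mapsto J_\be(U+\phi_\be,V+\psi_\be)=:F(\tilde\de_\be,\be)$ in the variable $\tilde\de_\be$. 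The expansion of this reduced functional has two competing leading terms: the $U$--$V$ interaction, of size $-\be\,\tilde\de_\be^{3/2}|\log\tilde\de_\be|$ (positive, since $\be<0$), and the mutual repulsion of the $k$ bubbles inside $V$, of size $-c_k\tilde\de_\be$ for some explicit $c_k>0$. Setting the derivative in $\tilde\de_\be$ equal to zero balances these, which forces $\tilde\de_\be^{1/2}|\log\tilde\de_\be|\sim -1/\be$, and a degree/implicit-function argument produces an actual solution $\tilde\de_\be\in(0,1/\e^2)$ for $\be\le\be_\star$ with $\be_\star<-\sqrt2$ sufficiently negative; the constraint $\be_\star<-\sqrt2$ is the threshold guaranteeing that $\tilde\de_\be$ lands in the allowed window and that all error terms are genuinely subleading.

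\textbf{Main obstacle.} The hard part is Proposition~\ref{P.linear}: proving uniform invertibility of the linearized operator in the regime $\be\to-\infty$. Unlike the four-dimensional, small-$\be$ setting of \cite{ChMePi2023}, here the coupling constant is blowing up, so one cannot treat $\be u^2v^3$ as a perturbation in any crude way; one must extract smallness from $\tilde\de_\be$ and the spatial separation of the supports of $U$ and the bubbles, and this balancing is delicate precisely because the three-dimensional interaction decays only like $\tilde\de_\be^{1/2}$ (up to logs) rather than polynomially faster. Getting the blow-up analysis to close — in particular ensuring that the rescaled limits at the bubble centers and at the origin decouple and that the coupling does not generate a spurious nonzero limit — is the crux, and is exactly the point where the argument of \cite{GuoLiWei2014} breaks down (their claim that symmetry alone makes the operator invertible without projecting is false, since the dilation kernel directions survive the symmetry reduction). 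A secondary, but still nontrivial, technical difficulty is the sharp expansion of the reduced energy: one needs the $U$--$V$ interaction term with its logarithmic factor computed precisely enough (including the sign and the constant multiplying $\tilde\de_\be$ from the bubble--bubble interaction) to guarantee that the balancing equation has a nondegenerate-enough solution in the prescribed interval.
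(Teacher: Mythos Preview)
Your outline is broadly correct and matches the paper's Ljapunov--Schmidt strategy, but there is one genuine gap and one significant difference in where the difficulty sits.

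\textbf{The gap: Kelvin invariance.} The symmetry group you invoke (rotation by $2\pi/k$ and the reflections fixing the polygon) does \emph{not} kill all translation directions. The collective radial translation of the $k$ bubbles --- moving them all toward or away from the origin --- survives those symmetries, and so does the dilation direction of $U$ itself (your ``possibly'' is in fact a ``certainly''). The paper eliminates both of these by working in the space $X_k$ of functions that are in addition invariant under the Kelvin transform $\phi(x)=|x|^{-1}\phi(x/|x|^2)$. This has two consequences you miss: first, in the proof of Proposition~\ref{P.linear} the Kelvin invariance is exactly what forces $\langle\phi,Z^{(0)}\rangle=0$ (Step~1) and $\langle\psi,Z^{(1)}\rangle=0$ after rescaling (Step~3); second, the bubble centers are \emph{not} frozen on the unit circle but on the circle of radius $\sqrt{1-\tilde\de_\be^2}$, because that is the unique radius making each bubble Kelvin-invariant ($|\xi_{t,j}|^2+\tilde\de_\be^2=1$). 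With this extra symmetry the projected kernel is genuinely one-dimensional, spanned by $Z_{k,t}$ alone, and the reduced problem is a single equation in the single parameter $t$. Without it you would face a three-parameter reduction (scale of $U$, scale of the bubbles, radius of the polygon), which you do not set up.

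\textbf{Where the $\be\to-\infty$ difficulty actually lies.} You locate the main obstacle in the linear theory, saying the coupling $\be u^2v^3$ makes Proposition~\ref{P.linear} hard. In the paper the linearized operator is taken to be the \emph{decoupled} pair $\cL_1\phi=-\Delta\phi-5U^4\phi$, $\cL_2\psi=-\Delta\psi-5V^4\psi$, with all $\be$-terms pushed into the error $\cE$ and the nonlinear remainder $\cN$. Consequently Proposition~\ref{P.linear} involves no $\be$ at all; the blow-up argument is the standard one for sums of Yamabe bubbles. The place where $\be\to-\infty$ must be tamed is Lemma~\ref{L.linearCoupling}: the contraction estimate requires bounds like $|\be|\,\|UV^3\phi\|_{6/5}\lesssim |\log\de_\be|^{-1}\|\phi\|$, and it is here that the balance $\de_\be^{1/2}|\log\de_\be|=-1/\be$ (i.e.\ $|\be|\de_\be^{1/2}=|\log\de_\be|^{-1}$) is used to beat the diverging coupling constant. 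Your plan would still work if you included the coupling in the linearization, but you would then be re-deriving exactly these interaction estimates inside the linear step.
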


Here, we recall that the space  $\dot{H}^1 := \dot{H}^1(\RR^3)$, as usual, denotes the completion of $C_c^{\infty}(\RR^3)$ with respect to the norm  
$$
\|\phi\| := \left(\int_{\RR^3} |\nabla\phi|^2 dx\right)^{\frac12}\,, \quad \textup{for all } \phi \in \dot{H}^1\,.
$$
Note that $\dot{H}^1$ is a Hilbert space with the scalar product
$$
\langle \phi, \psi \rangle := \int_{\RR^3} \nabla \phi \nabla \psi\, dx\,, \quad \textup{for all } \phi, \psi \in \dot{H}^1\,.
$$

Since we are looking for nonnegative solutions, instead of dealing directly with \eqref{E.system2eq}, we construct a solution to 
\begin{equation} \label{E.system2eq+}
\left\{
\begin{aligned}
-\Delta u & = u_+^5 + \beta u_+^2 v_+^3 \quad &&\textup{in } \RR^3\,, \\
-\Delta v & = v_+^5 + \beta u_+^3 v_+^2 \quad &&\textup{in } \RR^3\,,
\end{aligned}
\right.
\end{equation}
where, for $s \in \RR$, we denote $s_{+} := \max\{s,0\}$ and $s_{-} := \max\{-s,0\}$. It is easy to check that, if $(u,v) \in \dot{H}^1 \times \dot{H}^1$ is a solution to \eqref{E.system2eq+}, then $(u,v)$ is nonnegative and so it is a solution to \eqref{E.system2eq}. Indeed, one just have to use $u_-$ as test function in the first equation in \eqref{E.system2eq+} and $v_-$ in the second one to get $u_-\equiv v_-\equiv 0$ in $\RR^3.$ Hence, from now on we deal with \eqref{E.system2eq+} instead of \eqref{E.system2eq}.


First of all, we recall that
\begin{equation} \label{E.bubble}
U(x) := \frac{3^\frac14}{(1+|x|^2)^\frac12}\,,
\end{equation}
and, for $\de > 0$ and $\xi \in \RR^3$, we set 
\begin{equation} \label{E.bubbleDeltaXi}
U_{\de,\xi}(x) := \frac{1}{\sqrt{\de}} U \Big( \frac{x-\xi}{\de} \Big) = \frac{3^\frac14 \sqrt{\de}}{(\de^2 + |x-\xi|^2)^\frac12}\,.
\end{equation}
Likewise, for $t >0$, $\be < - \sqrt{2}$,
 $k \in \ZZ$ with $k \geq 2$, and $j \in \{1,\ldots,k\}$, with a small abuse of notation we set
\begin{equation} 
U_{t,j}(x):= U_{t \de_\be, \, \xi_{t,j}}(x) = \frac{3^\frac14 \sqrt{t \de_\be}}{(t^2\de_\be^2 + |x-\xi_{t,j}|^2)^\frac12}\,, \quad \textup{with} \quad \xi_{t,j} := \sqrt{1-t^2 \delta_\be^2}\, \Big( \e^{\frac{2\pi(j-1)}{k}i}, 0 \Big)\,.
\end{equation}
Here, $\de_\be \in (0,1/\e^2)$ is chosen so that
\begin{equation} \label{E.delta_beta}
\de_\be^\frac12 |\log {\de_\be}| = - \frac{1}{\beta }\,.
\end{equation}
Finally, we set
\begin{equation} \label{E.sumofbubbles}
V(x) := \sum_{j=1}^k U_{t,j}(x)\,.
\end{equation}
Let us stress that $V$ depends on $t,$ $k$ and $\beta$ but that, for simplicity, we will omit the explicit  dependence. 

Next, for every $k \in \ZZ$ with $k \geq 2$, we let $X_k$ be the subset of $\dot{H}^1$ such that:
\begin{itemize} 
\item Every $\phi \in X_k$ is even with respect to $x_2$ and $x_3$, i.e. 
\begin{equation} \label{E.invarianceEven}
\phi(x_1,x_2,x_3) = \phi(x_1,-x_2,x_3) = \phi(x_1,x_2,-x_3)\,, \quad \textup{for all }  (x_1,x_2,x_3) \in \RR^3\,.
\end{equation}
\item Every $\phi \in X_k$ is invariant under rotation of $2\pi/k$ in the $x_1,x_2$-variables, i.e.
\begin{equation} \label{E.invarianceRotation}
\phi(r\e^{i\te},x_3) = \phi(r \e^{(\te + \frac{2\pi}{k})i}, x_3)\,, \quad \textup{for all } \te, x_3 \in \RR \textup{ and } r > 0\,.
\end{equation}
\item Every $\phi \in X_k$ is invariant under the action of the Kelvin transform, i.e.
\begin{equation} \label{E.invarianceKelvin}
\phi(x) = \frac{1}{|x|} \phi \Big( \frac{x}{|x|^2} \Big) \,, \quad \textup{for all } x \in \RR^3 \setminus \{0\}\,.
\end{equation}
\end{itemize}
In other words, we define
\begin{equation} \label{E.Xk}
X_k := \big\{ \phi \in \dot{H}^1 : \phi \textup{ satisfies  \eqref{E.invarianceEven}, \eqref{E.invarianceRotation} and \eqref{E.invarianceKelvin}} \big\}\,.
\end{equation}
Note that $X_k$ is a Hilbert space endowed with the same scalar product and the same norm as $\dot{H}^1$.
By Sobolev inequality, we have that the embedding $\dot{H}^1 \hookrightarrow L^6 (\RR^3)$ is continuous. Hence we can define, via the Riesz representation theorem, the continuous operator 
$$
(-\Delta)^{-1} : L^{\frac65}(\RR^3) \to \dot{H}^1\,, \quad f \mapsto u\,,  
$$
where $u \in \dot{H}^1$ is the unique solution to
$$
-\Delta u = f \quad \textup{in } \RR^3\,.
$$
Thus, we can reformulate \eqref{E.system2eq+} as
\begin{equation} \label{E.system2eq+inv}
\left\{
\begin{aligned}
u & = (-\Delta)^{-1}(u_+^5 + \beta u_+^2 v_+^3) \quad &&\textup{in } \RR^3\,, \\
v & = (-\Delta)^{-1}(v_+^5 + \beta u_+^3 v_+^2) \quad &&\textup{in } \RR^3\,,
\end{aligned}
\right.
\end{equation}
and we will look for a solution to \eqref{E.system2eq+inv} of the form
\begin{equation} \label{E.ansatz}
u = U + \phi \,, \quad v = V + \psi\,,
\end{equation}
with $\phi, \, \psi \in X_k$ sufficiently small. It is worth emphasizing that, if $u,v \in X_k$, then also the right hand side in \eqref{E.system2eq+inv} belongs to $X_k$. In particular, let us point out that
$$
\widetilde{\phi}(x) := \frac{1}{|x|} \phi \Big( \frac{x}{|x|^2} \Big) \quad \Longrightarrow  \quad  \Delta \phi (y) = \frac{1}{|y|^5} \, \Delta \widetilde{\phi} \Big( \frac{y}{|y|^2} \Big) \,.
$$

In terms of the functions $\phi$ and $\psi$ we can rewrite \eqref{E.system2eq+} as
\begin{equation} \label{E.systemphipsi1}
\left\{
\begin{aligned}
\mathcal{L}_1\phi & = \cE_1 + \cN_1(\phi,\psi)\,, \\
\mathcal{L}_2\psi & = \cE_2 + \cN_2(\phi,\psi)\,,
\end{aligned}
\right.
\end{equation}
where the linear operators $(\cL_1, \cL_2)$ are given by
\begin{equation}\label{E.linear}
\cL_1\phi:= -\Delta \phi - 5 U^4 \phi\,, \quad \cL_2 \psi := -\Delta \psi - 5 V^4 \psi\,,
\end{equation}
the error terms $(\cE_1, \cE_2)$ are given by
\begin{equation} \label{E.error}
\cE_1 := \beta U^2 V^3\,, \quad \cE_2 := V^5 - \sum_{j=1}^k U_{t,j}^5 + \beta U^3 V^2\,,
\end{equation}
and the coupling terms $(\cN_1,\cN_2)$ are given by
\begin{equation}\label{E.nonlinearCoupling}
\begin{aligned}
& \cN_1(\phi,\psi) := (U+\phi)_+^5-U^5-5U^4\phi+\beta(U+\phi)_+^2(V+\psi)_+^3-\beta U^2V^3\,, \\
& \cN_2(\phi,\psi) := (V+\psi)_+^5-V^5-5V^4\psi+\beta(V+\psi)_+^2(U+\phi)_+^3-\beta V^2U^3\,.
\end{aligned}
\end{equation}
Using the dual formulation,  \eqref{E.systemphipsi1} is equivalent to:
\begin{equation} \label{E.systemphipsiDual}
\bL^*(\phi,\psi) - \bE^* - \bN^*(\phi,\psi) = 0 \,, \quad (\phi,\psi) \in X_k \times X_k\,.
\end{equation} 
Here, the linear operator $\bL^* := (\cL_1^*, \cL_2^*)$ is given by
$$
\cL_1^*\phi := \phi - (-\Delta)^{-1} (5U^4 \phi) \,, \quad \cL_2^* \psi := \psi - (-\Delta)^{-1}(5V^4 \psi)\,,
$$
the error term $\bE^* := (\cE_1^*,\cE_2^*)$ is given by
$$
\cE_1^* := (-\Delta)^{-1} \cE_1  \,, \quad \cE_2^* := (-\Delta)^{-1} \cE_2\,,
$$
and the coupling term $\bN^* := (\cN_1^*, \cN_2^*)$ is given by
$$
\cN_1^*(\phi,\psi) := (-\Delta)^{-1} \cN_1(\phi,\psi)\,, \quad \cN_2^*(\phi,\psi) := (-\Delta)^{-1} \cN_2(\phi,\psi)\,. 
$$

The first difficulty when trying to find a solution to \eqref{E.systemphipsiDual} is that the operator $\bL^*$ is in general not invertible. We will overcome this issue using a Ljapunov-Schmidt reduction argument. 

We consider the linearization of the Yamabe equation around $U$ (given in \eqref{E.bubble}), i.e., 
\begin{equation} \label{E.linearizedYamabe}
-\Delta \phi - 5 U^4 \phi = 0 \quad \textup{in } \RR^3\,.
\end{equation}
It is well known (see e.g. \cite{BiEg1991}) that the set of $\dot{H}^1$-solutions to \eqref{E.linearizedYamabe} is $\Span\{Z^{(\ell)}: \ell \in \{0,\ldots,3\} \}$, where
\begin{equation}\label{E.kernelYamabe}
Z^{(0)}(x) := \frac{3^{\frac14} (|x|^2-1)}{2(1+|x|^2)^\frac32}\,, \qquad Z^{(\ell)}(x) := \frac{3^\frac14\, x_\ell}{(1+|x|^2)^\frac32} \quad \ell \in \{1,2,3\}\,. 
\end{equation}
Similarly to \eqref{E.bubbleDeltaXi} and \eqref{E.sumofbubbles}, for $\de > 0$ and $\xi \in \RR^3$, we set
$$
Z_{\de,\xi}^{(0)}(x) := \frac{1}{\sqrt{\de}}\,  Z^{(0)} \Big( \frac{x-\xi}{\de} \Big) = \frac{3^{\frac14} \sqrt{\de}\,  (|x-\xi|^2-\de^2)}{2  (\de^2+|x-\xi|^2)^\frac32}\,,
$$
and define
\begin{equation}\label{E.Z}
Z_{k,t}(x):= \sum_{j=1}^k Z^{(0)}_{t \de_\be, \xi_{t,j}}(x) =  \frac{3^{\frac14} \sqrt{t \de_\be}}{2} \sum_{j=1}^k \frac{|x-\xi_{t,j}|^2 - t^2 \de_\be^2}{(t^2 \de_\be^2 + |x - \xi_{t,j}|^2)^{\frac32}}\,.
\end{equation}
Having at hand this notation, we set
\begin{equation}\label{E.Kk}
\cK_{k,t} := X_k \cap {\rm span}\{Z_{k,t}\}\,, \qquad \cK_{k,t}^{\perp} := \big\{ \phi \in X_k : \langle \phi, Z_{k,t} \rangle = 0 \big\}\,,  
\end{equation}
and consider the orthogonal projections 
$$
\Pi: X_k \times X_k \to X_k \times \cK_{k,t} \,, \qquad \Pi^{\perp}: X_k \times X_k \to X_k \times \cK_{k,t}^{\perp}\,.
$$
Note that $\cK_{k,t}^{\perp}$ is the space of $X_k$ functions $\dot{H}^1$-orthogonal to $\cK_{k,t}$, and that one has the decomposition
$$
X_k = \cK_{k,t} \oplus \cK_{k,t}^{\perp}\,.
$$
We can then rewrite \eqref{E.systemphipsiDual} as the equivalent system
\begin{equation} \label{E.systemLyapunov-Schmidt}
\left\{
\begin{aligned}
\ & \Pi \big[ \bL^*(\phi,\psi) - \bE^* - \bN^*(\phi,\psi)\big] = 0 \,, \\
& \Pi^{\perp} \big[\bL^*(\phi,\psi) - \bE^* - \bN^*(\phi,\psi)\big] = 0 \,, 
\end{aligned}
\right.
\qquad (\phi,\psi) \in X_k \times X_k\,.
\end{equation}
The first equation is the so-called \textit{auxiliary equation} and, as we will see later on, it is finite dimensional. The rest of the section is devoted to prove the existence of a solution $(\phi,\psi)$ to \eqref{E.systemLyapunov-Schmidt}. 

Through the rest of the paper, we will systematically use \cite[Lemma A.1]{PrVe2019}. For convenience, we state this result here with our notation, and for the particular case of dimension $n = 3$. 

\begin{lemma}{\rm( \hspace{-0.12cm}\cite[Lemma A.1]{PrVe2019})} \label{L.PremoselliVetois}
Let 
\begin{equation} \label{E.Omega1}
\Om_1 := \big\{ x \in \RR^3: |x - \xi_{t,1}| < |x - \xi_{t,j}| \textup{ for all } j \in \{2, \ldots,k\} \big\}\,.
\end{equation}
For every $\nu, \ga \geq 0$ such that $\nu + \ga \leq 6$ and all $0 < a < b$, there exits a constant $C > 0$ such that
$$
\int_{\Om_1} U^{6-\nu-\ga}\, U_{t,1}^{\nu} \bigg( \sum_{j=2}^k U_{t,j}\bigg)^{\ga} dx \leq C \Big( \de_\be^{ \frac{\nu + \ga}{2}} f_1(k,\nu,\ga) + (k \log k)^{\ga} f_2(\de_\be,k,\nu,\ga) \Big)\,,
$$
for all $k \geq 2$ and all $t \in [a,b]$. Here,
$$
f_1(k,\nu,\ga)
:= \left\{
\begin{aligned}
& k^{\ga-1} && \textup{if } \nu < 2\,,\\
& k^{\ga-1} (\log k)^{\ga+1} && \textup{if } \nu = 2\,,\\
& k^{\nu + \ga -3} (\log k)^{\ga} && \textup{if } \nu > 2\,,
\end{aligned}
\right.
\quad \textup{ and } \quad 
f_2(\de_\be,k,\nu,\ga) := \left\{
\begin{aligned}
& \de_\be^{ \frac{\nu + \ga}{2}} k^{\nu-3} && \textup{if } \nu < 3\,,\\
& \de_\be^{ \frac{3+\ga}{2}} |\log \de_\be| && \textup{if } \nu = 3\,,\\
& \de_\be^{3 + \frac{\ga-\nu}{2}} && \textup{if } \nu > 3\,. 
\end{aligned}
\right.
$$
\end{lemma}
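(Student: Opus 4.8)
The estimate is elementary but delicate, and I would reconstruct it by slicing the Voronoi cell $\Om_1$ around its centre $\xi_{t,1}$. Write $s:=|x-\xi_{t,1}|$, $d_j:=|\xi_{t,1}-\xi_{t,j}|$ and $W:=\sum_{j=2}^{k}U_{t,j}$. Since $|\xi_{t,j}|^2=1-t^2\de_\be^2$ for every $j$, the points $\xi_{t,j}$ lie on a circle of radius $\asymp 1$ with $d_j\asymp\sin(\pi(j-1)/k)$; in particular the minimal gap is $\asymp 1/k$ and $\sum_{j=2}^{k}d_j^{-1}\asymp k\log k$. Three ingredients are needed. (a) From the explicit bubble, $U_{t,1}(x)\le 3^{1/4}\sqrt{t\de_\be}\,\min\bigl(1/(t\de_\be),\,1/s\bigr)$. (b) On $\Om_1$ one has $2|x-\xi_{t,j}|\ge|x-\xi_{t,j}|+|x-\xi_{t,1}|\ge d_j$ together with $|x-\xi_{t,j}|\ge s$, hence $|x-\xi_{t,j}|\ge\tfrac12\max(s,d_j)$; a Riemann-sum estimate then yields $W(x)\le C\sqrt{\de_\be}\,k\log k$ on $\Om_1$, and the sharper $W(x)\le C\sqrt{\de_\be}\,k\log(e/s)$ whenever $s\ge C_0/k$. (c) A straightforward geometric estimate: already the two bisectors of $\xi_{t,1}$ with its neighbours in the polygon confine $x-\xi_{t,1}$ to a slab of width $\asymp 1/k$ perpendicular to the tangent of the circle at $\xi_{t,1}$, so for $s\le 2$ the slice $\Om_1\cap\{|x-\xi_{t,1}|=s\}$ has surface measure $\le Cs^2\min\bigl(1,\,1/(ks)\bigr)$ — the full sphere when $s\lesssim 1/k$, a thin spherical band of that area when $1/k\lesssim s\lesssim 1$.

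Next I would pass to a bounded domain via the Kelvin invariance of $U$, $U_{t,1}$ and $W$: the identity $(t\de_\be)^2+|\xi_{t,j}|^2=1$ forces each of these functions $h$ to satisfy $h(x)=|x|^{-1}h(x/|x|^2)$, so the density $U^{6-\nu-\ga}U_{t,1}^{\nu}W^{\ga}\,dx$ is Kelvin invariant; moreover $\Om_1$ is itself Kelvin invariant, since on $\{|x|=1\}$ the inequalities defining it are unchanged under $x\mapsto x/|x|^2$ because all $|\xi_{t,j}|$ coincide. Hence the integral equals twice its restriction to $\Om_1\cap\{|x|<1\}$, on which $s\le 2$ and $0<U\le 3^{1/4}$, so the weight $U^{6-\nu-\ga}$ may simply be bounded by a constant and ingredient (c) is available throughout.

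I would then split $\Om_1\cap\{|x|<1\}$ into the inner ball $B(\xi_{t,1},C_0/k)$ — contained in $\Om_1$ once $C_0$ is a small absolute constant — and its complement. On the inner ball I would use $W\le C\sqrt{\de_\be}\,k\log k$ and reduce to $\int_{B(\xi_{t,1},C_0/k)}U_{t,1}^{\nu}\,dx$, which after $x=\xi_{t,1}+t\de_\be y$ equals $c_\nu(t\de_\be)^{3-\nu/2}\int_{|y|<C_0/(kt\de_\be)}(1+|y|^2)^{-\nu/2}\,dy$; the three regimes $\nu<3$, $\nu=3$, $\nu>3$, in which the radial integral behaves like $R^{3-\nu}$, $\log R$, $O(1)$ (with $R=C_0/(kt\de_\be)$ large), produce — once multiplied by $(\sqrt{\de_\be}\,k\log k)^{\ga}$ — exactly the three cases of $(k\log k)^{\ga}f_2$. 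On the complement I would insert ingredients (a), (b), (c), pass to polar coordinates around $\xi_{t,1}$, and bound the spherical integral by (c); this reduces everything to $\de_\be^{(\nu+\ga)/2}k^{\ga-1}\int_{C_0/k}^{2}s^{1-\nu}\bigl(\log(e/s)\bigr)^{\ga}\,ds$ (the thin layer $s\asymp 1/k$ where the slice is the full sphere contributes no more and is absorbed). The three regimes $\nu<2$ (the integral converges at $0$), $\nu=2$ (it is logarithmic, producing the extra factor $(\log k)^{\ga+1}$) and $\nu>2$ (it is dominated by the endpoint $s\sim 1/k$, producing $k^{\nu-2}(\log k)^{\ga}$) reproduce the three cases of $\de_\be^{(\nu+\ga)/2}f_1$. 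Adding the two contributions yields the claim, uniformly for $t\in[a,b]$ since $t\asymp 1$ and $t\de_\be\asymp\de_\be$ there.

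The hard part will be obtaining the two sharpenings at once: the refined pointwise bound on $W$ with a genuine $\log(1/s)$ decay away from $\xi_{t,1}$ rather than the uniform $k\log k$, and the geometric slice estimate showing that at distance $s\gtrsim 1/k$ the cell $\Om_1$ fills only an $\asymp 1/(ks)$ fraction of the sphere. Replacing these by the crude bounds $W\lesssim\sqrt{\de_\be}\,k\log k$ and (slice measure) $\asymp s^2$ costs a full power of $k$ together with an extra logarithm, which is exactly what one cannot afford if the $k^{\ga-1}$ in $f_1$ is to survive. Once both estimates are in hand, what remains is the routine bookkeeping of which of the two stated terms dominates in each $\nu$-regime.
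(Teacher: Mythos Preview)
The paper does not prove this lemma: it is quoted from \cite[Lemma~A.1]{PrVe2019} with the notation adapted to dimension $3$, and no argument is reproduced. There is therefore nothing in the present paper to compare your proposal against.

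That said, your reconstruction is sound and is essentially the standard route to such an estimate. The three ingredients you isolate --- the pointwise bound $U_{t,1}\lesssim\sqrt{\de_\be}\min(\de_\be^{-1},s^{-1})$, the Riemann-sum bound $W\lesssim\sqrt{\de_\be}\,k\log(e/s)$ on $\Om_1\cap\{s\gtrsim 1/k\}$, and the slice-area bound $|\Om_1\cap\{|x-\xi_{t,1}|=s\}|\lesssim s^2\min(1,1/(ks))$ --- together with the Kelvin reduction to $\{|x|<1\}$ (which works precisely because $(t\de_\be)^2+|\xi_{t,j}|^2=1$, so each $U_{t,j}$ is Kelvin-invariant and the Voronoi constraints $x\cdot\xi_{t,1}>x\cdot\xi_{t,j}$ are manifestly invariant under $x\mapsto x/|x|^2$), do combine to give exactly the case structure of $f_1$ and $f_2$ after the radial integrations you indicate.

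One small clarification on your geometric ingredient~(c): the two bisecting planes with the nearest neighbours are not literally a slab --- since all bisectors contain the $x_3$-axis, $\Om_1$ is the wedge $\{|\theta-\theta_1|<\pi/k\}\times\RR_{x_3}$ --- but near $\xi_{t,1}$ (at distance $\asymp 1$ from the axis) this wedge has width $\asymp 1/k$ in the tangential direction, and the resulting area bound $\lesssim s/k$ on the spherical slice for $1/k\lesssim s\lesssim 1$ is unchanged. Your identification of the two sharp refinements (the $\log(1/s)$ improvement on $W$ and the $1/(ks)$ fraction in the slice) as the crux is exactly right: without them one overshoots $f_1$ by a factor $k(\log k)$.
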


\subsection{The error of approximation} As a preliminary step, we quantify how well the ansatz $(U,V)$ ``solves'' \eqref{E.systemphipsiDual}, or equivalently \eqref{E.systemLyapunov-Schmidt}. In order to make the notation lighter, we will write $\|\cdot\|_p = \|\cdot \|_{L^p(\RR^3)}$.

\begin{proposition} \label{P.error}
For all fixed integer $k \geq 2$ and all $0 < a < b$, there exists a constant $C > 0$ such that
$$
C^{-1} \Big( \|\cE_1^*\| + \|\cE_2^*\| \Big) \leq \|\cE_1\|_{\frac65} + \|\cE_2\|_{\frac65} \leq C \de_\be (1+|\beta|)\,,
$$
for all $\beta < -\sqrt{2}$ and $t \in [a,b]$. 
\end{proposition}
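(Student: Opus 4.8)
The plan is to establish the upper bound directly in $L^{6/5}$ and to read off the lower bound from the mapping properties of $(-\Delta)^{-1}$. For the lower bound, since $\cE_i^* = (-\Delta)^{-1}\cE_i$ and $(-\Delta)^{-1}\colon L^{6/5}(\RR^3)\to\dot{H}^1$ is continuous, one has $\|\cE_i^*\|\leq C\|\cE_i\|_{6/5}$, and summing over $i\in\{1,2\}$ gives the left inequality. For the upper bound, recalling from \eqref{E.error} that $\cE_1=\beta U^2V^3$ and $\cE_2=V^5-\sum_{j=1}^kU_{t,j}^5+\beta U^3V^2$, it suffices to prove that, for some $C=C(k,a,b)$,
\[
\|U^2V^3\|_{6/5}+\|U^3V^2\|_{6/5}+\Big\|V^5-\sum_{j=1}^kU_{t,j}^5\Big\|_{6/5}\leq C\,\de_\be\,,
\]
since then $\|\cE_1\|_{6/5}\leq C|\beta|\de_\be$ and $\|\cE_2\|_{6/5}\leq C(1+|\beta|)\de_\be$, uniformly for $\beta<-\sqrt{2}$ and $t\in[a,b]$.

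For the two coupling terms, the idea is to exploit the symmetry of the ansatz. The integrands $U^{12/5}V^{18/5}$ and $U^{18/5}V^{12/5}$ are invariant under the rotation ${\mathscr R}_{2\pi/k}$, because $U$ is radial and the vertex set $\{\xi_{t,j}\}_{j=1}^k$ — hence $V$ — is ${\mathscr R}_{2\pi/k}$-invariant, while the Voronoi cells $\Om_1,\dots,\Om_k$ of the vertices (cf. \eqref{E.Omega1}) partition $\RR^3$ up to a null set and are permuted by ${\mathscr R}_{2\pi/k}$; thus $\int_{\RR^3}(\cdots)=k\int_{\Om_1}(\cdots)$. On $\Om_1$ I would write $V=U_{t,1}+W_1$ with $W_1:=\sum_{j=2}^kU_{t,j}$ and use $(a+b)^p\leq C_p(a^p+b^p)$ to reduce each integral to a sum of quantities of the form $\int_{\Om_1}U^{6-\nu-\ga}U_{t,1}^{\nu}W_1^{\ga}$ with $\nu+\ga\leq6$, which are exactly what Lemma \ref{L.PremoselliVetois} controls — here with $(\nu,\ga)\in\{(18/5,0),(0,18/5)\}$ for the first term and $(\nu,\ga)\in\{(12/5,0),(0,12/5)\}$ for the second. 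Since $k$ is fixed and $\de_\be$ is small, feeding these values into Lemma \ref{L.PremoselliVetois} gives $\int_{\Om_1}U^{12/5}V^{18/5}+\int_{\Om_1}U^{18/5}V^{12/5}\leq C\de_\be^{6/5}$; raising to the power $5/6$ and undoing the symmetry reduction yields $\|U^2V^3\|_{6/5}+\|U^3V^2\|_{6/5}\leq C\de_\be$.

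The heart of the matter is the self-interaction term. By ${\mathscr R}_{2\pi/k}$-invariance it again suffices to bound $\int_{\Om_1}\big|V^5-\sum_{j=1}^kU_{t,j}^5\big|^{6/5}$. On $\Om_1$, writing $V=U_{t,1}+W_1$ as before, the elementary estimates $|(a+b)^5-a^5|\leq C(a^4b+b^5)$ for $a,b\geq0$ and $\sum_{j=2}^kU_{t,j}^5\leq W_1^5$ give $\big|V^5-\sum_jU_{t,j}^5\big|\leq C(U_{t,1}^4W_1+W_1^5)$ on $\Om_1$, so the integral is controlled by $\int_{\Om_1}U_{t,1}^{24/5}W_1^{6/5}+\int_{\Om_1}W_1^6$. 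These fit Lemma \ref{L.PremoselliVetois} with $(\nu,\ga)=(24/5,6/5)$ — the borderline admissible case $\nu+\ga=6$ — and $(\nu,\ga)=(0,6)$, and again yield a bound $\leq C\de_\be^{6/5}$. Alternatively, and more transparently, one can argue by hand: the pairwise distances $|\xi_{t,i}-\xi_{t,j}|$ are bounded below by a constant depending only on $k$, so $\Om_1$ stays a fixed distance away from every $\xi_{t,j}$ with $j\geq2$, whence $W_1\leq C_k\de_\be^{1/2}$ on $\Om_1$; combining this with $\int_{\RR^3}U_{t,1}^{24/5}=(t\de_\be)^{3/5}\int_{\RR^3}U^{24/5}\leq C\de_\be^{3/5}$ (scaling, and $\int U^{24/5}<\infty$ since $24/5>3$) and with $\int_{\Om_1}W_1^6\leq C_k\de_\be^3$ (the $L^6$-mass of each $U_{t,j}$ concentrates within $O(\de_\be)$ of $\xi_{t,j}$) gives $\int_{\Om_1}\big|V^5-\sum_jU_{t,j}^5\big|^{6/5}\leq C_k\de_\be^{6/5}$, hence $\|V^5-\sum_jU_{t,j}^5\|_{6/5}\leq C_k\de_\be$.

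I expect the main obstacle to be this last estimate, and specifically the fact that the integration takes place on the Voronoi cell $\Om_1$ and not on all of $\RR^3$: it is precisely the exclusion of fixed-size neighbourhoods of the other concentration points that turns the a priori $O(1)$ quantities $\int W_1^6$ and $\int U_{t,1}^{24/5}W_1^{6/5}$ into $O(\de_\be^{6/5})$ ones. One must also keep track of which of the two summands in Lemma \ref{L.PremoselliVetois} is dominant for $k$ fixed as $\de_\be\to0$, and check that every pair $(\nu,\ga)$ invoked obeys $\nu+\ga\leq6$. The remaining points — that $\de_\be$ is well-defined and small for $\beta<-\sqrt{2}$ by \eqref{E.delta_beta}, and that $\cE_1,\cE_2\in L^{6/5}(\RR^3)$ so the dual quantities $\cE_1^*,\cE_2^*$ make sense — are routine.
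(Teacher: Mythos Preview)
Your proposal is correct and follows essentially the same approach as the paper: symmetry reduction to $\Om_1$, splitting $V=U_{t,1}+W_1$, and invoking Lemma~\ref{L.PremoselliVetois} with the same exponent pairs $(\nu,\ga)$ for each of the three pieces. The only cosmetic difference is that for the self-interaction term the paper cites \cite[Equation (3.7)]{PrVe2019} for the pointwise bound on $\Om_1$, whereas you derive $\big|V^5-\sum_jU_{t,j}^5\big|\leq C(U_{t,1}^4W_1+W_1^5)$ directly from $|(a+b)^5-a^5|\leq C(a^4b+b^5)$; both lead to the same integrals $\int_{\Om_1}U_{t,1}^{24/5}W_1^{6/5}$ and $\int_{\Om_1}W_1^6$ and the same conclusion.
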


\begin{proof}
We first prove that there exists $C > 0$ (independent of $\beta$ and $t$) such that
\begin{equation} \label{E.conclussionE1}
\|\cE_1\|_{\frac65} \leq  C \de_\be |\beta|\,.
\end{equation}

First, taking into account the definition of $\Om_1$ (see \eqref{E.Omega1}), we immediately get that 
\begin{align*}
\|\cE_1\|_{\frac65}^{\frac65} & = |\beta|^{\frac65} \int_{\RR^3} U(x)^{6-\frac{18}{5}} \bigg( \sum_{j=1}^k U_{t,j}(x) \bigg)^{\frac{18}{5}} dx = k |\be|^{\frac65} \int_{\Om_1} U(x)^{6-\frac{18}{5}} \bigg( U_{t,1}(x) + \sum_{j=2}^k U_{t,j}(x) \bigg)^{\frac{18}{5}} dx \\
& \leq 2^{\frac{13}{5}} k |\be|^{\frac65} \int_{\Om_1} U(x)^{6-\frac{18}{5}} U_{t,1}(x)^{\frac{18}{5}} dx +  2^{\frac{13}{5}} k |\be|^{\frac65} \int_{\Om_1} U(x)^{6-\frac{18}{5}} \bigg(  \sum_{j=2}^k U_{t,j}(x) \bigg)^{\frac{18}{5}} dx \,.
\end{align*}
Once we have this decomposition, we estimate each term of the right hand side separately. First, applying Lemma \ref{L.PremoselliVetois} with $\nu = 18/5$ and $\ga = 0$, we get
$$
\int_{\Om_1} U(x)^{6-\frac{18}{5}} U_{t,1}(x)^{\frac{18}{5}} dx \lesssim \de_\beta^{\frac65} ( (\de_\beta k)^{\frac35} + 1) \,.
$$
Likewise, by Lemma \ref{L.PremoselliVetois} with $\nu = 0$ and $\ga = 18/5$, it follows that
$$
\int_{\Om_1} U(x)^{6-\frac{18}{5}} \bigg(  \sum_{j=2}^k U_{t,j}(x) \bigg)^{\frac{18}{5}} dx \lesssim \de_\be^{\frac95} (k^{\frac{13}{5}} + k^{\frac35} \log(k)^{\frac{18}{5}} ) \,.
$$
Thus, we infer that
\begin{equation} \label{E.conclussionE1withk}
\|\cE_1\|_{\frac65} \lesssim \de_\be |\be| \left[ \de_\be^{\frac35} \big(k^{\frac85}(1+\log(k)^{\frac{18}{5}}) + k^{\frac{18}{5}} \big)+1 \right]^{\frac56}.
\end{equation}
Note that here, and during the rest of the proof, the implicit constants may depend on $a$ and $b$, but they are independent of $k$, $\beta$ and $t$. Finally, since $k$ is a fixed number, \eqref{E.conclussionE1} immediately follows from \eqref{E.conclussionE1withk}.

Next, we prove that there exists $C > 0$ (independent of $\be$ and $t$) such that
\begin{equation} \label{E.conclussionE2}
\|\cE_2\|_{\frac65} \leq C \de_\be (1+|\be|)\,.
\end{equation}
To that end, we write
$$
\cE_2 = \cE_{2,1} + \cE_{2,2}\,, \quad \textup{ with } \quad \cE_{2,1} := \beta V^2 U^3 \quad \textup{and} \quad \cE_{2,2} := V^5 - \sum_{j=1}^k U_{t,j}^5\,,
$$
and we estimate each $\cE_{2,i}$ separately. 

First, arguing as in the proof of \eqref{E.conclussionE1withk}, we see that
$$
\|\cE_{2,1}\|_{\frac65}^{\frac65} \leq 2^{\frac75} k |\beta|^{\frac65}  \int_{\Om_1} U(x)^{6-\frac{12}{5}} U_{t,1}(x)^{\frac{12}{5}} dx +  2^{\frac{7}{5}} k |\be|^{\frac65} \int_{\Om_1} U(x)^{6-\frac{12}{5}} \bigg(  \sum_{j=2}^k U_{t,j}(x) \bigg)^{\frac{12}{5}} dx \,.
$$
Then, using twice Lemma \ref{L.PremoselliVetois} (first $\nu = 12/5$ and $\ga = 0$ and then $\nu = 0$ and $\ga = 12/5$), we get
\begin{equation} \label{E.conclussionE21withk}
\|\cE_{2,1}\|_{\frac65} \lesssim \de_\be |\be| \left[ k^{\frac25} (1+\log(k)^{\frac{12}{5}}) + k^{\frac{12}{5}} \right]^{\frac56}\,.
\end{equation}

On the other hand, by \cite[Equation (3.7)]{PrVe2019}, we get that 
$$
\|\cE_{2,2}\|_{\frac65}^{\frac65} \lesssim k \int_{\Om_1} U_{t,1}(x)^{\frac{24}{5}}  \bigg(  \sum_{j=2}^k U_{t,j}(x) \bigg)^{\frac{6}{5}} dx + k \int_{\Om_1} \bigg(  \sum_{j=2}^k U_{t,j}(x) \bigg)^6 dx\,.
$$
Using once again Lemma \ref{L.PremoselliVetois}, we estimate each term of the right hand side and obtain that
\begin{equation} \label{E.conclussionE22withk}
\|\cE_{2,2}\|_{\frac65} \lesssim \de_\be  \left[\de_\be^{\frac95} (k^4 \log(k)^{\frac65} + k^6 + k^4 \log(k)^6 ) + k^{\frac{11}{5}} \log(k)^{\frac65} \right]^{\frac56}\,.
\end{equation}
Since $k$ is a fixed number, \eqref{E.conclussionE2} immediately follows from \eqref{E.conclussionE21withk} and \eqref{E.conclussionE22withk}. 

Taking into account that there exists a constant $C > 0$ such that
\begin{equation} \label{E.continuityDeltaminus1}
\|(-\Delta)^{-1} f\| \leq C \|f\|_{\frac65}\,, \quad \textup{ for all } f \in L^{\frac65}(\RR^3)\,,
\end{equation}
the result follows combining \eqref{E.conclussionE1} and \eqref{E.conclussionE2} with the definition of $\cE_i^*$, $i=1,2$. 
\end{proof}

\subsection{The linear theory} Once we have quantified the error of approximation, we start our analysis of \eqref{E.systemLyapunov-Schmidt} by solving the second equation there. More precisely, we deal with
\begin{equation} \label{E.equationPiPerp}
\Pi^{\perp} \big[\bL^*(\phi,\psi) - \bE^* - \bN^*(\phi,\psi)\big] = 0 \,, \quad (\phi,\psi) \in X_k \times X_k\,.
\end{equation}
To that end, let us introduce the shortened notation
$$
\bL^{\perp}_* := \Pi^{\perp} \bL^*\,, \quad \bE^{\perp}_* := \Pi^{\perp} \bE^*\,, \quad \textup{and} \quad \bN^{\perp}_* := \Pi^{\perp} \bN^*\,.
$$

The first step to solve \eqref{E.equationPiPerp}  is to analyze the invertibility of the linear operator $\bL_*^{\perp}$.

\begin{proposition} \label{P.linear}
For all fixed integer $k \geq 2$ and all $0 < a < b$, there exist constants $\be_\star < -\sqrt{2}$ and $C > 0$ such that, for all $\be \in (-\infty, \be_\star]$ and all $t \in [a,b]$,
\begin{equation}
\|(\phi,\psi)\| \leq C \|\bL^{\perp}_*(\phi,\psi)\| \,, \quad \textup{\textit{for all} } (\phi,\psi) \in X_k \times \cK_{k,t}^{\perp}\,.
\end{equation}
In particular, the inverse operator $(\bL^{\perp}_*)^{-1} : X_k \times \cK_{k,t}^{\perp} \to X_k \times \cK_{k,t}^{\perp}$ exists and is continuous. 
\end{proposition}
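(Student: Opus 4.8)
The strategy is the standard argument by contradiction combined with a concentration–compactness / blow-up analysis adapted to the two components. Suppose the a priori estimate fails. Then there exist sequences $\beta_n \to -\infty$, $t_n \in [a,b]$, and $(\phi_n,\psi_n) \in X_k \times \cK_{k,t_n}^\perp$ with $\|(\phi_n,\psi_n)\| = 1$ but $\|\bL^\perp_*(\phi_n,\psi_n)\| \to 0$. Writing $\bL^\perp_*(\phi_n,\psi_n) = (h_n,g_n)$ with $\|h_n\| + \|g_n\| \to 0$, and unravelling the projection, there are real numbers $c_n$ such that
\begin{equation*}
\left\{
\begin{aligned}
\phi_n - (-\Delta)^{-1}(5U^4\phi_n) &= h_n\,,\\
\psi_n - (-\Delta)^{-1}(5V^4\psi_n) &= g_n + c_n Z_{k,t_n}\,,
\end{aligned}
\right.
\end{equation*}
where I must also keep track of the orthogonality $\langle \psi_n, Z_{k,t_n}\rangle = 0$. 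Testing the second equation against $Z_{k,t_n}$ and using that $-\Delta Z^{(0)}_{\delta,\xi} - 5U_{\delta,\xi}^4 Z^{(0)}_{\delta,\xi} = 0$ together with Lemma~\ref{L.PremoselliVetois} to control the cross-interactions among the bubbles $U_{t_n,j}$, I expect to show $|c_n| \|Z_{k,t_n}\| \to 0$, so that the Lagrange-multiplier term is negligible and can be absorbed into $g_n$.

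Next I treat the two equations separately. For the first component: since $U$ is fixed (independent of $\beta$ and $t$), the operator $\cL_1^*$ is a fixed compact perturbation of the identity on $X_k$, and its kernel in $\dot H^1$ is $\Span\{Z^{(0)},\dots,Z^{(3)}\}$. The symmetry constraints defining $X_k$ — evenness in $x_2,x_3$, $2\pi/k$-rotational invariance, and Kelvin invariance \eqref{E.invarianceKelvin} — kill all four kernel elements: $Z^{(1)},Z^{(2)}$ are odd in $x_1$ or $x_2$ hence excluded, $Z^{(3)}$ is odd in $x_3$, and $Z^{(0)}$ is anti-invariant under the Kelvin transform. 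Hence $\cL_1^*$ is injective on $X_k$, and being Fredholm of index zero it is an isomorphism of $X_k$; thus $\|\phi_n\| \lesssim \|h_n\| \to 0$. For the second component: this is the delicate part. I rescale around each concentration point, setting $\tilde\psi_{n,j}(y) := \tilde\de_{\beta_n}^{1/2}\psi_n(\xi_{t_n,j} + \tilde\de_{\beta_n} y)$ (with $\tilde\de_{\beta_n} = t_n\de_{\beta_n}$). Standard elliptic estimates and the $\dot H^1$-bound give, up to subsequence, weak convergence $\tilde\psi_{n,j} \rightharpoonup \psi_{\infty,j}$ solving the limiting equation $-\Delta\psi_\infty - 5U^4\psi_\infty = 0$, so $\psi_{\infty,j} \in \Span\{Z^{(\ell)}\}$. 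The symmetry of $X_k$ forces the $k$ limits to be rotations of a single profile; the orthogonality $\langle\psi_n, Z_{k,t_n}\rangle = 0$ passes to the limit and eliminates the $Z^{(0)}$-direction, while the evenness/Kelvin constraints must be shown to kill the remaining translational directions $Z^{(1)},Z^{(2)},Z^{(3)}$ as well, giving $\psi_{\infty,j} \equiv 0$. One also checks no energy escapes to the region away from the bubbles (where $V \to 0$ and the operator is a small perturbation of $-\Delta$, invertible) nor to spatial infinity (handled by Kelvin invariance, which trades the neighborhood of $\infty$ for a neighborhood of $0$). Collecting: $\psi_n \to 0$ strongly in $\dot H^1$, contradicting $\|(\phi_n,\psi_n)\|=1$.

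The surjectivity and continuity of the inverse then follow formally: the estimate shows $\bL^\perp_*$ is injective with closed range on the Hilbert space $X_k\times\cK_{k,t}^\perp$; since $\bL^\perp_*$ is a compact perturbation of the identity (both $(-\Delta)^{-1}(5U^4\cdot)$ and $(-\Delta)^{-1}(5V^4\cdot)$ are compact, and $\Pi^\perp$ is bounded), it is Fredholm of index zero, so injectivity gives surjectivity, and the a priori bound gives $\|(\bL^\perp_*)^{-1}\| \leq C$ uniformly in $\beta \leq \beta_\star$ and $t \in [a,b]$.

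\textbf{Main obstacle.} The crux is the blow-up analysis of the second equation: one must simultaneously (i) show that the rescaled limits at the $k$ vertices are forced to vanish by the combination of the orthogonality condition and the three symmetries in \eqref{E.invarianceEven}–\eqref{E.invarianceKelvin} — in particular checking carefully that the $x_2,x_3$-evenness together with the rotational symmetry leaves no room for the translational kernel modes once one remembers that the vertices lie on the $(x_1,x_2)$-plane — and (ii) rule out energy concentrating in the neck regions between bubbles or leaking to infinity, using Lemma~\ref{L.PremoselliVetois} to quantify that $V^4$ is $L^{3/2}$-small away from the $\xi_{t,j}$'s in a manner uniform in $k$ (fixed) and $t\in[a,b]$. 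This is exactly the step whose analogue was mishandled in \cite{GuoLiWei2014}: there is no shortcut that avoids projecting onto $\cK_{k,t}$, and the orthogonality condition is genuinely needed to discard the $Z^{(0)}$-mode.
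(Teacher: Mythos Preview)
Your plan is essentially the paper's proof: contradiction with normalised sequences, control of the Lagrange multiplier $c_n$ by testing against $Z_{k,t_n}$, weak-limit analysis for $\phi_n$ using the $X_k$ symmetries, blow-up around $\xi_{t_n,1}$ for $\psi_n$, and a final strong-convergence argument via $\int V_n^4\psi_n^2 \to 0$.

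Two points deserve flagging. First, for the $\phi$-component you write that $Z^{(1)}$ is ``odd in $x_1$ hence excluded'', but $X_k$ imposes evenness only in $x_2$ and $x_3$, not in $x_1$; the correct mechanism (used in the paper) is that evenness in $x_2$ kills $Z^{(2)}$, and then the $2\pi/k$-rotation invariance with $k\ge 2$ forces the remaining $Z^{(1)}$-coefficient to vanish as well. Second, for the rescaled limit $\psi_\infty$ at a vertex you correctly identify that the Kelvin symmetry is what eliminates $Z^{(1)}$, but this is the genuinely nontrivial computation of the proof: after rescaling, neither evenness nor rotation survives in a form that sees the $x_1$-translation mode, and the paper exploits \eqref{E.invarianceKelvin} together with the identity $|\xi_{t,1}|^2 + t^2\de_\be^2 = 1$ (this is exactly why the blow-up points are placed on that sphere) to derive, via an explicit change of variables, that $\int_{\RR^3} \frac{y_1}{(1+|y|^2)^{7/2}}\,\psi_\infty(y)\,dy = 0$. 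You should expect this step to require a careful calculation rather than a soft symmetry argument.
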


\begin{proof}
We argue by contraction and assume that there exist sequences $(\be_n)_n \subset (-\infty,0)$, $(t_n)_n \subset [a,b]$ and $(\phi_n, \psi_n)_n \subset X_k \times \cK_{k,t_n}^{\perp}$ such that $\be_n \to - \infty$ and
\begin{equation} \label{E.contradictionLinearTheory}
\bL_*^{\perp} (\phi_n, \psi_n) = (f_n, g_n) \in X_k \times \cK_{k,t_n}^{\perp}\,, \quad \|\phi_n\| + \|\psi_n\| = 1\,, \quad \textup{and} \quad \|f_n\| + \|g_n\| = o(1)\,. 
\end{equation}
More precisely, we assume that $\be_n \to - \infty$, and that there exists a sequence $(c_n)_n \subset \RR$ such that, for all $n \in \NN$, $(\phi_n,\psi_n)$ solves the system
\begin{equation} \label{E.sequencesLinearized}
\left\{
\begin{aligned}
-\Delta \phi_n - 5 U^4 \phi_n & = -\Delta f_n && \quad \textup{in } \RR^3\,,\\
-\Delta \psi_n - 5 V_n^4 \psi_n & = -\Delta g_n - c_n \Delta Z_n && \quad \textup{in } \RR^3\,.
\end{aligned}
\right.
\end{equation}
Here,
$$
V_n(x) := \sum_{j=1}^k U_{t_n \de_{\be_n},\, \xi_{t_n, j}}(x)\,, \qquad Z_n(x) := \sum_{j=1}^k Z_{t_n \de_{\be_n},\, \xi_{t_n, j}}^{(0)}(x)\,,
$$
and we are assuming that
$$
\|\phi_n\| + \|\psi_n\| = 1\,, \quad \textup{and} \quad \|f_n\| + \|g_n\| = o(1) \quad \textup{as } n \to \infty\,.
$$
For later purposes, we introduce the notation
$$
U_{jn}(x) := U_{t_n \de_{\be_n},\, \xi_{t_n, j}}(x) \quad \textup{and} \quad Z_{jn}(x) :=  Z_{t_n \de_{\be_n},\, \xi_{t_n, j}}^{(0)}(x)\,,
$$
so that
$$
V_n = \sum_{j=1}^k U_{jn} \quad \textup{and} \quad Z_n = \sum_{j=1}^k Z_{jn}\,.
$$
We divide the rest of the proof into four steps for the sake of clarity.

\medbreak
\noindent \textit{Step 1.}\textit{ Up to a subsequence, $\phi_n \rightharpoonup 0$ in $\dot{H}^1$ and $\phi_n \to 0$ in $L^p_{\loc}(\RR^3)$ for all $p \in [2,6)$.}

\smallbreak
Since $(\|\phi_n\|)_n$ is bounded, we have the existence of $\phi \in X_k$ such that, up to a subsequence, 
$$
\phi_n \rightharpoonup \phi \quad \textup{in } \dot{H}^1\,, \quad \textup{and} \quad \phi_n \to \phi \quad \textup{in } L^p_{\loc}(\RR^3) \textup{ for all } p \in [2,6)\,.
$$
Moreover, taking into account \eqref{E.sequencesLinearized}, we infer that
$$
-\Delta \phi - 5 U^4 \phi = 0 \quad \textup{in } \RR^3\,.
$$
Hence, by \cite[Lemma A.1]{BiEg1991}, we get that $\phi \in \Span\{ Z^{(\ell)},\ \ell \in \{0,\ldots,3\}\}$. Taking into account the definition of $X_k$, it is then not difficult to conclude that $\phi \equiv 0$ and so the first step. Indeed, note that
$$
\langle \phi, Z^{(\ell)} \rangle = 5 \int_{\RR^3} U^4 Z^{(\ell)}\phi\, dx \,, \quad \forall\ \ell \in \{0, \ldots,3\}\,.
$$
Then, using \eqref{E.invarianceEven} it is straightforward to see that 
$$
\int_{\RR^3} U^4 Z^{(2)} \phi\, dx = \int_{\RR^3} U^4 Z^{(3)} \phi \, dx = 0\,.
$$
Moreover, combining that $\langle \phi, Z^{(2)} \rangle = 0$ with \eqref{E.invarianceRotation} and the fact that $k \geq 2$, we see that
$$
\int_{\RR^3} U^4 Z^{(1)} \phi\, dx = 0\,.
$$
Finally, using \eqref{E.invarianceKelvin}, we get that
$$
\int_{\RR^3} U^4 Z^{(0)} \phi\, dx = 0\,,
$$
and thus $\langle \phi, Z^{(\ell)} \rangle = 0$ for all $\ell \in \{0, \ldots, 3\}$. This implies that $\phi \equiv 0$, as desired. 

\medbreak

\noindent \textit{Step 2.}  \textit{$\|Z_n\|^2 = \frac{15\sqrt{3}\pi^2}{64} k + o(1)$ and $c_n \to 0$ as $n \to \infty$.}

\smallbreak
First of all, using that $Z^{(0)}$ solves \eqref{E.linearizedYamabe}, we get that
\begin{align*}
& \|Z_n\|^2  = \sum_{j=1}^k \int_{\RR^3} |\nabla Z_{jn}|^2 dx + 2 \sum_{j \neq i} \int_{\RR^3} \nabla Z_{jn} \nabla Z_{in} \, dx = 5 \sum_{j=1}^k \int_{\RR^3} U_{jn}^4 Z_{jn}^2\, dx + 10 \sum_{j\neq i}  \int_{\RR^3}  U_{jn}^4 Z_{jn} Z_{in}\, dx \\
& \quad = 5k \int_{\RR^3} U(y)^4 Z^{(0)}(y)^2 dy +  10  \sum_{j\neq i} \int_{\RR^3}  U_{jn}^4 Z_{jn} Z_{in}\, dx = \frac{15\sqrt{3}\pi^2}{64} k +  10  \sum_{j \neq i} \int_{\RR^3}  U_{jn}^4 Z_{jn} Z_{in}\, dx\,.
\end{align*}
Hence, to conclude  that $\|Z_n\|^2 = \frac{15\sqrt{3}\pi^2}{64} k + o(1)$ as $n \to \infty$, we are just missing to prove that
\begin{equation} \label{E.Zno1}
 10  \sum_{j \neq i} \int_{\RR^3}  U_{jn}^4 Z_{jn} Z_{in}\, dx = o(1) \quad \textup{as } n \to \infty\,.
\end{equation}
We start pointing out that, for all $n \in \NN$ and all $j \in \{1, \ldots,k\}$,
\begin{equation} \label{E.pointwise1}
|Z_{jn}| \leq U_{jn} \quad \textup{in } \RR^3\,.
\end{equation}
Likewise, if we assume that $i \neq j$ and choose $r_0 > 0$ small enough so that $B_{r_0}(\xi_{t_n,j}) \cap B_{r_0}(\xi_{t_n,i}) = \emptyset$, it is straightforward to check that
\begin{equation} \label{E.pointwise2}
U_{jn} \leq \frac{3^\frac14 \sqrt{b}}{r_0} \sqrt{\de_{\be_n}} \quad \textup{in } B_{r_0}(\xi_{t_n,i})\,.
\end{equation}
Using these pointwise estimates, one can check that
$$
\bigg|\int_{B_{r_0}(\xi_{t_n,j})} U_{jn}^4 Z_{jn} Z_{in}\, dx \bigg| \lesssim \de_{\be_n}\,, $$
and that
$$\bigg|\int_{B_{r_0}(\xi_{t_n,i})} U_{jn}^4 Z_{jn} Z_{in}\, dx \bigg| + \bigg| \int_{\RR^3 \setminus (B_{r_0}(\xi_{t_n,i}) \cup B_{r_0}(\xi_{t_n,i}))} U_{jn}^4 Z_{jn} Z_{in}\, dx \bigg| \lesssim \de_{\be_n}^3\,.
$$
Note that here and through the proof, the implicit constants (in $\lesssim\,$) may depend on $r_0$, $a$ and $b$, but they are independent of $n$. Thus, since $\be_n \to - \infty$, by \eqref{E.delta_beta}, we conclude that \eqref{E.Zno1} holds and so that
\begin{equation} \label{E.ZnExpansion}
\|Z_n\|^2 = \frac{15\sqrt{3}\pi^2}{64} k + o(1) \quad \textup{as } n \to \infty\,.
\end{equation}

Next, testing the second equation in \eqref{E.sequencesLinearized} with $Z_n$, and using that $Z^{(0)}$ is a solution to \eqref{E.linearizedYamabe}, that $g_n \in \cK_{k,t_n}^{\perp}$ and \eqref{E.ZnExpansion}, we get that
$$
\int_{\RR^3} \bigg( \sum_{j=1}^k U_{jn}^4 Z_{jn} - V_n^4 Z_n \bigg) \psi_n\, dx = c_n \bigg( \frac{3\sqrt{3}\pi^2}{64} k + o(1) \bigg) \quad \textup{as } n \to \infty\,.
$$
Moreover, using the multinomial theorem, and H\"older and Sobolev inequalities, we infer that
\begin{align*}
& \bigg| \int_{\RR^3} \bigg( \sum_{j=1}^k U_{jn}^4 Z_{jn} - V_n^4 Z_n \bigg) \psi_n\, dx \bigg| \lesssim \|\psi_n\| \bigg[ \sum_{j\neq i} \|U_{jn}^4 Z_{in}\|_{\frac65} + \sum_{j \neq i} \sum_{l = 1}^k \Big( \|U_{jn}^3 U_{in} Z_{ln}\|_{\frac65} + \|U_{jn}^2 U_{in}^2 Z_{ln}\|_{\frac65} \Big)\\
& \quad \sum_{j \neq i \neq r} \sum_{l=1}^k \|U_{jn}^2 U_{in} U_{rn} Z_{ln}\|_{\frac65} + \sum_{j \neq i \neq r \neq p} \sum_{l=1}^k \|U_{jn} U_{in} U_{rn} U_{pn} Z_{ln}\|_{\frac65} \bigg]\,. 
\end{align*}
Since $k \geq 2$ is a fixed integer, using the pointwise estimates \eqref{E.pointwise1}--\eqref{E.pointwise2}, and arguing as we did to prove \eqref{E.Zno1}, we estimate each term on the right hand side and conclude that
$$
 \int_{\RR^3} \bigg( \sum_{j=1}^k U_{jn}^4 Z_{jn} - V_n^4 Z_n \bigg) \psi_n\, dx = O(\de_{\be_n}) \quad \textup{as } n \to \infty\,.
$$
Hence, it follows that
$$
 c_n  \frac{3\sqrt{3}\pi^2}{64} k = o(1) \quad \textup{as } n \to \infty\,,
$$
and so that $c_n \to 0$ as $n \to \infty$. This concludes the proof of the second step. 
\medbreak
\noindent \textit{Step 3.} \textit{Up to a subsequence, $\widetilde{\psi}_n(y):= \sqrt{t_n \de_{\be_n}} \psi_n (t_n \de_{\be_n} y + \xi_{t_n,1})$ satisfies $\widetilde{\psi}_n \rightharpoonup 0$ in $\dot{H}^1$ and $\widetilde{\psi}_n \to 0$ in $L^p_{\loc}(\RR^3)$ for all $p \in [2,6)$.}

\smallbreak
First note that $\|\widetilde{\psi}_n\| = \|\psi_n\|$ for all $n \in \NN$. Hence, since $(\|\psi_n\|)_n$ is bounded, there exists $\psi \in \dot{H}^1$ such that, up to a subsequence,
$$
\widetilde{\psi}_n \rightharpoonup \psi \quad \textup{in } \dot{H}^1\,, \quad \textup{and} \quad \widetilde{\psi}_n \to \psi \quad \textup{in } L^p_{\loc}(\RR^3) \textup{ for all } p \in [2,6)\,.
$$
Moreover, taking into account the second step, we infer that
$$
-\Delta \psi - 5 U^4 \psi = 0 \quad \textup{in } \RR^3\,,
$$
and thus, by \cite[Lemma A.1]{BiEg1991}, we get that $\psi \in \Span\{ Z^{(\ell)},\ \ell \in \{0,\ldots,3\}\}$. Note that, by the definition of $\xi_{t_n,1}$ for all $n \in \NN$, $\widetilde{\psi}_n$ satisfies \eqref{E.invarianceEven} for all $n \in \NN$. Thus, arguing as in the first step, we get that
$$
\langle \psi, Z^{(2)} \rangle = \langle \psi, Z^{(3)} \rangle = 0\,.
$$
On the other hand, since $\psi_n \in \cK_{k,t_n}^{\perp} \subset X_k$,  $Z^{(0)}$ is a solution to \eqref{E.linearizedYamabe} and  $\widetilde{\psi}_n \rightharpoonup \psi$ in $\dot{H}^1$,
\begin{align*}
0 & = \langle \psi_n, Z_n \rangle = 5 \int_{\RR^3} \bigg( \sum_{j=1}^k U_{jn}^4 Z_{jn} \bigg) \psi_n \, dx = 5k \int_{\RR^3} U_{1n}^4 Z_{1n} \psi_n \, dx \\
&  = 5k \int_{\RR^3} U(y)^4 Z^{(0)}(y) \widetilde{\psi}_n(y)\, dy = k \langle  \widetilde{\psi}_n, Z^{(0)}  \rangle = k \langle  \psi, Z^{(0)} \rangle + o(1) \quad \textup{as } n \to \infty\,.
\end{align*}
Taking the limit as $n \to \infty$, we then infer that $\langle \psi, Z^{(0)} \rangle = 0$. Thus, if we prove that $\langle \psi, Z^{(1)} \rangle = 0$,  then necessarily $\psi \equiv 0$ and the third step will follow. 

To that end, using \eqref{E.invarianceKelvin}, we get that
\begin{align*}
& \int_{\RR^3} \frac{y_1}{(1+|y|^2)^{\frac72}}\, \widetilde{\psi}_n(y)\, dy =  (t_n \de_{\be_n})^{\frac72} \int_{\RR^3} \frac{x_1-(1-t_n^2 \de_{\be_n}^2)^\frac12 }{(t_n^2 \de_{\be_n}^2 + |x-\xi_{t_n,1}|^2)^{\frac72}} \, \psi_n(x) \, dx \\ 
& \quad =   (t_n \de_{\be_n})^{\frac72} \int_{\RR^3} \frac{z_1 - (1-t_n^2 \de_{\be_n}^2)^{\frac12}|z|^2 }{(t_n^2 \de_{\be_n}^2 + |z - \xi_{t_n,1}|^2)^{\frac72}} \, \psi_n(z)\, dz \\
& \quad = \int_{\RR^3}  \frac{y_1}{(1+|y|^2)^{\frac72}}\, \widetilde{\psi}_n(y)\, dy + \frac{(1- t_n^2 \de_{\be_n}^2)^{\frac12}}{t_n \de_{\be_n}} \int_{\RR^3} \frac{1-|t_n \de_{\be_n} y + \xi_{t_n,1}|^2}{(1+|y|^2)^{\frac72}}\, \widetilde{\psi}_n (y)\, dy \,.
\end{align*}
Thus, it follows that
\begin{align*}
 0 & = \frac{(1- t_n^2 \de_{\be_n}^2)^{\frac12}}{t_n \de_{\be_n}} \int_{\RR^3} \frac{1-|t_n \de_{\be_n} y + \xi_{t_n,1}|^2}{(1+|y|^2)^{\frac72}}\, \widetilde{\psi}_n (y)\, dy \\
&= (1- t_n^2 \de_{\be_n}^2)^{\frac12} t_n \de_{\be_n} \int_{\RR^3} \frac{1-|y|^2}{(1+|y|^2)^{\frac72}}\, \widetilde{\psi}_n(y)\, dy - 2 (1-t_n^2 \de_{\be_n}^2) \int_{\RR^3} \frac{y_1}{(1+|y|^2)^{\frac72}}\, \widetilde{\psi}_n(y)\, dy \,.
\end{align*}
Sending $n \to \infty$, we conclude that
$$
\int_{\RR^3} \frac{y_1}{(1+|y|^2)^{\frac72}}\, \psi(y)\, dy = 0\,,
$$
and so that $\langle \psi, Z^{(1)} \rangle = 0$, as desired. 

\medbreak
\noindent \textit{Step 4. Conclusion of the proof.}
\smallbreak

We are going to prove that $\| \psi_n\| + \|\phi_n \| \to 0$ as $n \to \infty$, reaching a contradiction with \eqref{E.contradictionLinearTheory}. First of all, we test the first equation in \eqref{E.sequencesLinearized} with $\phi_n$, the second one with $\psi_n$, and we sum them both to get that
\begin{align*}
1 = \|\psi_n\| + \|\phi_n\| = 5 \int_{\RR^3} U^4 \phi_n^2 \, dx + \langle \phi_n, f_n \rangle + 5 \int_{\RR^3} V_n^4 \psi_n^2 \, dx + \langle \psi_n, g_n + c_n Z_n \rangle \,.
\end{align*}
Combining \eqref{E.contradictionLinearTheory} with the second step, we infer that
$$
\langle \phi_n, f_n \rangle \to 0 \quad \textup{and} \quad \langle \psi_n, g_n + c_n Z_n \rangle \to 0 \quad \textup{as } n \to \infty\,.
$$
Likewise, using the first step and the decay of $U$, we infer that
$$
\int_{\RR^3} U^4 \phi_n^2 \, dx \to 0 \quad \textup{as } n \to \infty\,.
$$
Hence, we have that
$$
1 = \|\psi_n\| + \|\phi_n\| = 5 \int_{\RR^3} V_n^4 \psi_n^2 \, dx + o(1) \quad \textup{as } n \to \infty\,,
$$ 
and thus, to conclude the proof, we just have to show that 
$$
\int_{\RR^3} V_n^4 \psi_n^2 \, dx  \to 0 \quad \textup{as } n \to \infty\,.
$$
Recalling the definition of $\Om_1$, see \eqref{E.Omega1}, we get that
\begin{align*}
\int_{\RR^3} V_n^4 \psi_n^2 \, dx = k \int_{\Om_1} \bigg( U_{1n} + \sum_{j=2}^k U_{jn} \bigg)^4 \psi_n^2\, dx \leq 8k \int_{\Om_1} U_{1n}^4 \psi_n^2\, dx + 8k \int_{\Om_1} \bigg( \sum_{j=2}^k U_{jn} \bigg)^4 \psi_n^2 \, dx\,.
\end{align*}
One one hand, by Lemma \ref{L.PremoselliVetois} applied with $\nu = 0$ and $\ga = 6$, we get that
$$
8k \int_{\Om_1} \bigg( \sum_{j=2}^k U_{jn} \bigg)^4 \psi_n^2 \, dx \leq 8k \|\psi_n\|_6^2\, \bigg( \int_{\Om_1} \bigg( \sum_{j=2}^k U_{jn} \bigg)^6 dx \bigg)^{\frac23} \lesssim \de_{\be_n}^2 k^3 \Big( k^2 + \log(k)^6 \Big)^{\frac23}\,.
$$
Since $k \geq 2$ is a fixed integer, we conclude that
$$
8k \int_{\Om_1} \bigg( \sum_{j=2}^k U_{jn} \bigg)^4 \psi_n^2 \, dx = O(\de_{\be_n}^2) \quad \textup{as } n \to \infty\,.
$$

On the other hand, note that
$$
8k \int_{\Om_1} U_{1n}^4 \psi_n^2\, dx \leq 8k \int_{\RR^3} U_{1n}^4 \psi_n^2\, dx = 8k \int_{\RR^3} U(y)^4 \widetilde{\psi}_n(y)^2\, dy\,.
$$
Since $k \geq 2$ is a fixed integer, using the third step and the decay of the function $U$, we get that
$$
8k \int_{\Om_1} U_{1n}^4 \psi_n^2\, dx \to 0 \quad \textup{as } n \to \infty\,,
$$
and so that
$$
\int_{\RR^3} V_n^4 \psi_n^2 \, dx \to 0 \quad \textup{as } n \to \infty\,,
$$
as desired. 
\end{proof}

\subsection{Nonlinear theory} Having at hand Propositions \ref{P.error} and \ref{P.linear} we can now give the desired existence result for \eqref{E.equationPiPerp}. More precisely, we prove the following:

\begin{proposition} \label{P.nonlinear}
For all fixed integer $k \geq 2$ and all $0 < a < b$, there exist constants $\be_{\star} < -\sqrt{2}$ and $C > 0$ such that, for all $\be \in (-\infty, \be_{\star}]$ and all $t \in [a,b]$, the system 
\begin{equation} \label{E.nonlinearProjected}
\bL_*^{\perp}(\phi,\psi) - \bE_*^{\perp} - \bN_*^{\perp}(\phi,\psi) = 0\,, \quad (\phi,\psi) \in X_k \times \cK_{k,t}^{\perp}\,,
\end{equation}
has a unique solution $(\phi[t],\psi[t]) \in X_k \times \cK_{k,t}^{\perp}$ such that
\begin{equation} \label{E.boundNonlinear}
\|\phi[t]\| + \|\psi[t]\| \leq C \de_\be (1+|\be|)\,.
\end{equation}
Moreover, the map $t \mapsto (\phi[t], \psi[t])$ is continuously differentiable.
\end{proposition}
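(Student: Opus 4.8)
The plan is to rewrite the projected equation \eqref{E.nonlinearProjected} as a fixed point problem for a contraction on a small ball of $X_k\times\cK_{k,t}^{\perp}$, and then to promote the solution to a $C^1$ function of $t$ by the implicit function theorem, after trading the $t$-dependent constraint for a scalar Lagrange multiplier. First I would invoke Proposition~\ref{P.linear}: for $\be\le\be_\star$ and $t\in[a,b]$ the operator $\bL_*^{\perp}$ is invertible on $X_k\times\cK_{k,t}^{\perp}$ with inverse bounded by a constant $C$ uniform in $\be$ and $t$, so \eqref{E.nonlinearProjected} is equivalent to
\[
(\phi,\psi)=\cT(\phi,\psi):=(\bL_*^{\perp})^{-1}\bigl[\bE_*^{\perp}+\bN_*^{\perp}(\phi,\psi)\bigr],\qquad (\phi,\psi)\in X_k\times\cK_{k,t}^{\perp}.
\]
I would work in the closed ball of radius $\rho_\be:=M\de_\be(1+|\be|)$, with $M$ a large constant to be fixed; by \eqref{E.delta_beta} one has $\de_\be|\be|=\de_\be^{1/2}/|\log\de_\be|\to0$, so $\rho_\be\to0$ as $\be\to-\infty$.

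The heart of the matter is the estimate of $\bN^*$. Using \eqref{E.continuityDeltaminus1} this reduces to $L^{6/5}$-bounds on $\cN_1,\cN_2$ from \eqref{E.nonlinearCoupling}, which I would split into (i) the purely local superlinear remainders $(U+\phi)_+^5-U^5-5U^4\phi$ and $(V+\psi)_+^5-V^5-5V^4\psi$, controlled by $\|\phi\|^2+\|\psi\|^2$ up to higher order; (ii) the quadratic-and-higher part of $\be(U+\phi)_+^2(V+\psi)_+^3$ and $\be(V+\psi)_+^2(U+\phi)_+^3$; and (iii) the terms linear in $(\phi,\psi)$, namely, up to constants, $\be UV^3\phi$, $\be U^2V^2\psi$, $\be U^2V^2\phi$, $\be U^3V^2\psi$. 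For (ii)--(iii) I would bound, e.g., $\|\be U^2V^2\psi\|_{6/5}\le|\be|\,\|U^2V^2\|_{3/2}\,\|\psi\|$ and estimate the mixed integrals ($\int U^3V^3$, $\int U^{6/5}V^{18/5}$, and so on) by symmetrising over the cells $\Om_1$ of \eqref{E.Omega1} and applying Lemma~\ref{L.PremoselliVetois} with the relevant exponents. Since $k$ is fixed, each such mixed $U$--$V$ integral is $O(\de_\be^{\theta})$ for some $\theta>0$, so, as $|\be|\sim\de_\be^{-1/2}/|\log\de_\be|$, the prefactors in (ii)--(iii) are $O(\om(\be))$ with $\om(\be)\to0$. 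This gives $\|\bN^*(\phi,\psi)\|\lesssim\om(\be)\bigl(\|\phi\|+\|\psi\|\bigr)+\bigl(\|\phi\|+\|\psi\|\bigr)^{3/2}$ and, likewise, $\|\bN^*(\phi,\psi)-\bN^*(\phi',\psi')\|\lesssim\bigl(\om(\be)+\rho_\be^{1/2}\bigr)\|(\phi,\psi)-(\phi',\psi')\|$ on the ball of radius $\rho_\be$. Combining with Proposition~\ref{P.error}, for $M$ large and $-\be$ large enough that $C(\om(\be)+\rho_\be^{1/2})<1/2$, the map $\cT$ preserves the ball and is a contraction there, and Banach's theorem yields the unique solution $(\phi[t],\psi[t])$ satisfying \eqref{E.boundNonlinear}.

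For the regularity in $t$ I would deal with the moving subspace $\cK_{k,t}^{\perp}$ by reintroducing a multiplier: $(\phi,\psi)\in X_k\times\cK_{k,t}^{\perp}$ solves \eqref{E.nonlinearProjected} iff there is $c\in\RR$ with $\bL^*(\phi,\psi)-\bE^*-\bN^*(\phi,\psi)=c\,(0,Z_{k,t})$ and $\langle\psi,Z_{k,t}\rangle=0$, with $Z_{k,t}$ as in \eqref{E.Z}. I would then apply the implicit function theorem to $\mathcal G(t,\phi,\psi,c):=\bigl(\bL^*(\phi,\psi)-\bE^*-\bN^*(\phi,\psi)-c(0,Z_{k,t}),\ \langle\psi,Z_{k,t}\rangle\bigr)$, which is $C^1$ since $t\mapsto\xi_{t,j}$, $t\mapsto V$, $t\mapsto Z_{k,t}$ are smooth on $[a,b]$ and the Nemytskii maps behind $\bN^*$ and $\bE^*$ are $C^1$ from $\dot H^1\times\dot H^1$ into $L^{6/5}(\RR^3)$, hence into $\dot H^1$ after composing with $(-\Delta)^{-1}$. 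At the solution, whose multiplier $c[t]$ is itself $O(\de_\be(1+|\be|))$ (test with $Z_{k,t}$ as in Step~2 of the proof of Proposition~\ref{P.linear}), the differential $\partial_{(\phi,\psi,c)}\mathcal G$ is invertible: by the Step~2 estimates $\partial_{(\phi,\psi)}[\bL^*-\bN^*]$ is a small perturbation of $\bL^*$, hence invertible on $X_k\times\cK_{k,t}^{\perp}$ by Proposition~\ref{P.linear}, while the equation for $c$ is solved using $\langle Z_{k,t},Z_{k,t}\rangle\sim\tfrac{3\sqrt3\pi^2}{64}k\neq0$. The implicit function theorem then gives that $t\mapsto(\phi[t],\psi[t],c[t])$, and in particular $t\mapsto(\phi[t],\psi[t])$, is continuously differentiable.

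The step I expect to be the main obstacle is the package of estimates in the second paragraph — specifically, showing that the coupling terms which are \emph{linear} in $(\phi,\psi)$ but carry the large factor $\be$ are genuinely small. This is precisely where the slow rate $\de_\be^{1/2}|\log\de_\be|=-1/\be$ of \eqref{E.delta_beta} is forced, and where Lemma~\ref{L.PremoselliVetois} must be applied to each mixed monomial $U^{a}U_{t,1}^{b}\bigl(\sum_{j\ge2}U_{t,j}\bigr)^{c}$; once these are in hand, the self-mapping, contraction, and implicit-function-theorem arguments are routine.
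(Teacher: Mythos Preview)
Your proposal is correct and takes essentially the same approach as the paper: the paper isolates the nonlinear estimates into Lemma~\ref{L.linearCoupling}, obtaining the explicit rate $\|\bN_*^{\perp}(\phi,\psi)\|\le\frac{C}{|\log\de_\be|}(\|\phi\|+\|\psi\|)$ on the ball of radius $D\de_\be(1+|\be|)$ --- precisely by handling the $\be$-linear coupling terms $\be UV^3\phi$, $\be U^2V^2\psi$, $\be U^2V^2\phi$, $\be U^3V\psi$ via Lemma~\ref{L.PremoselliVetois}, exactly the step you flagged as the main obstacle --- and then declares the contraction argument standard. The paper gives no details at all on the $C^1$-dependence in $t$, so your Lagrange-multiplier / implicit-function-theorem treatment actually supplies more than the paper does there.
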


The proof of this result follows from a standard application of the contraction mapping theorem, as a consequence of the following result:

\begin{lemma} \label{L.linearCoupling}
For all fixed integer $k \geq 2$, all $D > 0$ and all $0 < a < b$, there exists $C > 0$ such that
\begin{align} \label{E.balltoball}
& \|\bN_*^{\perp}(\phi_0,\psi_0)\|=\|(\bN_*^{\perp}(\phi_0,\psi_0))_1\|+\|(\bN_*^{\perp}(\phi_0,\psi_0))_2\| \leq  \frac{C}{|\log\de_\be|}\Big(\|\phi_0\|+\|\psi_0\|\Big)  \,, \\ \label{E.contraction}
& \|\bN_*^{\perp}(\phi_1,\psi_1)-\bN_*^{\perp}(\phi_2,\psi_2)\| \leq  \frac{C}{|\log\de_\be|}  \Big( \|\phi_1-\phi_2\|+\|\psi_1-\psi_2\| \Big)\,,
\end{align}
for all $\be < -\sqrt{2}$, all $t \in [a,b]$ and all $(\phi_0,\psi_0),\,(\phi_1,\psi_1),\,(\phi_2,\psi_2) \in \dot{H}^1 \times \dot{H}^1$ satisfying
$$
\|\phi_i\| + \|\psi_i\| \leq D\, \de_\be \big(1+|\be|\big)\,.
$$
\end{lemma}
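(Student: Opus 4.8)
The plan is to reduce \eqref{E.balltoball}--\eqref{E.contraction} to $L^{6/5}$--estimates on the \emph{raw} coupling terms $\cN_1(\phi,\psi),\cN_2(\phi,\psi)$ of \eqref{E.nonlinearCoupling}. Indeed, since $(-\Delta)^{-1}\colon L^{6/5}(\RR^3)\to\dot H^1$ is continuous by \eqref{E.continuityDeltaminus1} and the orthogonal projection $\Pi^{\perp}$ is norm non--increasing, it suffices to show that $\|\cN_1(\phi_0,\psi_0)\|_{\frac65}+\|\cN_2(\phi_0,\psi_0)\|_{\frac65}\leq\tfrac{C}{|\log\de_\be|}(\|\phi_0\|+\|\psi_0\|)$ together with the corresponding bound for the differences. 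The first step is to split each $\cN_i$ into a Yamabe part and a $\be$--coupling part, e.g.
$$
\cN_1=\big[(U+\phi)_+^5-U^5-5U^4\phi\big]+\be\big[(U+\phi)_+^2(V+\psi)_+^3-U^2V^3\big],
$$
and to dominate each pointwise by a finite sum of degree--$5$ monomials in $U,V,|\phi|,|\psi|$, using the elementary inequalities $|(a+s)_+^p-a^p-pa^{p-1}s|\lesssim a^{p-2}s^2+|s|^p$ (for $a\geq0$, $p\geq2$), $|(a+s)_+^2-a^2|\lesssim a|s|+s^2$ and $|(b+r)_+^3-b^3|\lesssim b^2|r|+|r|^3$.

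Next I would estimate each monomial in $L^{6/5}$ by H\"older's inequality, placing every factor in $L^6$ and invoking $\dot H^1\hookrightarrow L^6$. The monomials fall into two classes. Those that are at least quadratic in $(\phi,\psi)$ --- such as $U^3\phi^2$, $|\phi|^5$, $\be\,\phi^2V^3$, $\be\,U^2|\psi|^3$, $\be\,U|\phi||\psi|^3$, etc. --- are bounded by $C(\|\phi\|+\|\psi\|)^j$ with $j\geq2$, and one absorbs the surplus (including the factor $|\be|$) via the hypothesis $\|\phi_i\|+\|\psi_i\|\leq D\de_\be(1+|\be|)$ and the scaling \eqref{E.delta_beta}, which gives $\de_\be(1+|\be|)\sim\de_\be^{1/2}/|\log\de_\be|\lesssim1/|\log\de_\be|$ and $|\be|\de_\be\sim\de_\be^{1/2}/|\log\de_\be|\to0$. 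The only monomials that are genuinely linear in $(\phi,\psi)$ and carry the $\be$ prefactor are $\be\,UV^3\phi$, $\be\,U^2V^2\psi$ (from $\cN_1$) and $\be\,U^3V\psi$, $\be\,U^2V^2\phi$ (from $\cN_2$); for these I would use H\"older together with the geometric estimates
$$
\|UV^3\|_{\frac32}+\|U^3V\|_{\frac32}\lesssim\de_\be^{1/2},\qquad\|U^2V^2\|_{\frac32}\lesssim\de_\be\,|\log\de_\be|^{2/3},
$$
which follow from Lemma \ref{L.PremoselliVetois} applied on $\Om_1$ (with $k$ fixed and $\nu\in\{3/2,3,9/2\}$, $\ga=0$ for the diagonal contributions and $\nu=0$ for the off--diagonal ones). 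Multiplying by $|\be|\sim1/(\de_\be^{1/2}|\log\de_\be|)$ turns these into $\tfrac{C}{|\log\de_\be|}\|\phi\|$ and, using $\de_\be^{1/2}|\log\de_\be|^{2/3}\to0$, into $\tfrac{C}{|\log\de_\be|}\|\psi\|$. Summing the finitely many monomials yields \eqref{E.balltoball}.

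For the contraction estimate \eqref{E.contraction} I would write $\cN_i(\phi_1,\psi_1)-\cN_i(\phi_2,\psi_2)$ through the fundamental theorem of calculus; the common linearization cancels. The Yamabe part of the difference becomes $\int_0^1[g'(U+\phi_2+s(\phi_1-\phi_2))-g'(U)]\,ds\,(\phi_1-\phi_2)$ with $g(t)=t_+^5$, dominated pointwise by $\big(U^3(|\phi_1|+|\phi_2|)+(|\phi_1|+|\phi_2|)^4\big)|\phi_1-\phi_2|$, while the coupling part becomes $\be\int_0^1[\partial_a h\,(\phi_1-\phi_2)+\partial_b h\,(\psi_1-\psi_2)]\,ds$ with $h(a,b)=a_+^2b_+^3$, dominated by $\big((U+|\phi_1|+|\phi_2|)(V+|\psi_1|+|\psi_2|)^3\big)|\phi_1-\phi_2|+\big((U+|\phi_1|+|\phi_2|)^2(V+|\psi_1|+|\psi_2|)^2\big)|\psi_1-\psi_2|$. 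Expanding, the leading coefficients $UV^3$ and $U^2V^2$ (resp. $U^3V$ and $U^2V^2$ for $\cN_2$) are exactly those controlled by the geometric estimates above, now multiplying $\|\phi_1-\phi_2\|$ or $\|\psi_1-\psi_2\|$, whereas every term carrying a factor $|\phi_i|$ or $|\psi_i|$ in its coefficient gains an extra $\de_\be(1+|\be|)$ from the a priori bound. This reproduces \eqref{E.contraction} as in the first part, and Proposition \ref{P.nonlinear} then follows from the contraction mapping theorem applied on the ball $\{\|\phi\|+\|\psi\|\leq C\de_\be(1+|\be|)\}$, using Propositions \ref{P.error} and \ref{P.linear}.

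The main obstacle --- and the part I would write out in detail --- is the bookkeeping in the borderline logarithmic cases produced by Lemma \ref{L.PremoselliVetois}: the bound $\|U^2V^2\|_{3/2}\lesssim\de_\be|\log\de_\be|^{2/3}$ only beats $1/(|\be|\,|\log\de_\be|)$ because $\de_\be^{1/2}|\log\de_\be|^{2/3}\to0$ as $\be\to-\infty$ (equivalently $\de_\be\to0$), so one must track the exact powers of $|\log\de_\be|$ in each application of the lemma and check that they are consumed by the power $\de_\be^{1/2}$ gained from the scaling \eqref{E.delta_beta}. Once the list of monomials is fixed, everything else is a routine, if lengthy, term--by--term H\"older--Sobolev computation.
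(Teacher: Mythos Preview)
Your proposal is correct and follows essentially the same approach as the paper: the same reduction to $L^{6/5}$ estimates on $\cN_1,\cN_2$ via \eqref{E.continuityDeltaminus1}, the same pointwise Taylor/mean--value decomposition, the same identification of the four borderline linear terms $\be UV^3\phi$, $\be U^2V^2\psi$, $\be U^3V\psi$, $\be U^2V^2\phi$, and the same use of Lemma~\ref{L.PremoselliVetois} with the relation \eqref{E.delta_beta} to turn $|\be|\de_\be^{1/2}$ into $1/|\log\de_\be|$ and $|\be|\de_\be|\log\de_\be|^{2/3}$ into $\de_\be^{1/2}/|\log\de_\be|^{1/3}$. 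Your treatment of the contraction estimate via the fundamental theorem of calculus is slightly more explicit than the paper, which simply declares that part ``analogous'' and omits the details.
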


\begin{proof}
Firstly, note that there exists $C>0$ such that
\begin{equation*}\begin{split}
\|(\bN_*^{\perp}(\phi,\psi))_1\|& \leq C\|(U+\phi)_+^5-U^5-5U^4\phi+\beta(U+\phi)_+^2(V+\psi)_+^3-\beta U^2V^3\|_{\frac65}\,,\\
\|(\bN_*^{\perp}(\phi,\psi))_2\| &\leq C\|(V+\psi)_+^5-V^5-5V^4\psi+\beta(V+\psi)_+^2(U+\phi)_+^3-\beta V^2U^3\|_{\frac65}\,,
\end{split}\end{equation*}
for every $(\phi,\psi)\in \dot{H}^1 \times \dot{H}^1$. Also, notice that, doing Taylor and using the mean value theorem, we get the existence of $t,\lambda,\tau \in [0,1]$ such that
\begin{align*}
& \big|(U+\phi)_+^5-U^5-5U^4\phi\big|=\big|10(U+t\phi)_+^3\phi^2\big|\,, \\[0.25cm]
& \big|\beta(U+\phi)_+^2(V+\psi)_+^3-\beta U^2V^3\big| =\big|\beta\big((U+\phi)_+^2-U^2\big)(V+\psi)_+^3+\beta U^2\big((V+\psi)_+^3-V^3\big)\big|\\
& \quad  = \big|2\beta (U+\lambda\phi)_+\phi(V+\psi)_+^3+3\beta U^2(V+\tau\psi)^2_+\psi\big|\,,
\end{align*}
and hence
\begin{equation}\begin{split}\label{E.nonlinear1}
\|(\bN_*^{\perp}(\phi,\psi))_1\|&=O\Big(\big\|U^3\phi^2+|\phi|^5\big\|_{\frac65}\Big) +O\Big(|\beta|\big\|(U+|\phi|)|\phi|(V^3+|\psi|^3)+U^2(V^2+|\psi|^2)|\psi|)\big\|_{\frac65}\Big)\,.
\end{split}\end{equation}
Likewise, it can be seen that
\begin{equation}\begin{split}\label{E.nonlinear2}
\|(\bN_*^{\perp}(\phi,\psi))_2\|&=O\Big(\big\|V^3 \psi^2 +|\psi|^5\big\|_{\frac65}\Big) + O\Big(|\beta|\big\|(V+|\psi|)|\psi|(U^3+|\phi|^3)+V^2(U^2+|\phi|^2)|\phi|)\big\|_{\frac65}\Big)\,.
\end{split}\end{equation}
Since $\be \to -\infty$ here, some of these terms are not obvious to handle, being the worst ones the terms that are linear in $|\phi|$ and $|\psi|$. We focus on these terms and prove  that there exists a constant $C > 0$ such that
\begin{equation} \label{E.conclussionLinearTerms}
\begin{aligned}
& |\beta| \Big( \|U^2 V^2 \phi\|_{\frac65} + \|U^2 V^2 \psi\|_{\frac65} \Big) \leq  C  \frac{\sqrt{\de_\be}}{|\log\de_\be|^{\frac13}} \Big( \|\psi\| + \|\phi\| \Big)\,, \\
& |\beta| \Big( \|U V^3 \phi\|_{\frac65} + \|U^3 V \psi\|_{\frac65} \Big) \leq  \frac{C}{|\log\de_\be|}  \Big( \|\psi\| + \|\phi\| \Big)\,.
\end{aligned}  
\end{equation}
We estimate each term separately. Arguing as in the proof of Proposition \ref{P.error}, using twice again Lemma \ref{L.PremoselliVetois}, we get that
\begin{align*}
& |\be|^{\frac65} \|U^2 V^2 \psi\|_{\frac65}^{\frac65} \lesssim \|\psi\|^{\frac65} |\be|^{\frac65} \bigg( \int_{\RR^3} U^3 V^3 dx \bigg)^{\frac45} \\
& \quad \lesssim \|\psi\|^{\frac65} |\be|^{\frac65} \bigg( k \int_{\Om_1} U^{3} U_{t,1}^3 dx + k \int_{\Om_1} U^{3} \bigg( \sum_{j=2}^k U_{t,j} \bigg)^3 dx \bigg)^{\frac45} \lesssim \big[\|\psi\| |\be| \de_\be\big]^{\frac65} \Big(k^3 + k\log(k)^3 + k |\log\de_\be| \Big)^{\frac45}\,.
\end{align*}
Note that here, and during the rest of the proof, the implicit constants may depend on $a$ and $b$, but they are independent of $k$, $\beta$ and $t$. Since $k \geq 2$ is a fixed integer, we immediately deduce that there exists $C > 0$ (independent of $\be$ and $t$) such that
$$
|\be| \|U^2 V^2 \psi\|_{\frac65} \leq C \|\psi\| \de_\be |\be|  |\log\de_\be|^{\frac23} = C \|\psi\|\, \frac{\sqrt{\de_\be}}{|\log\de_\be|^{\frac13}}\,.
$$
The same proof also gives
$$
|\be| \|U^2 V^2 \phi\|_{\frac65} \leq C \|\phi\| \, \frac{\sqrt{\de_\be}}{|\log\de_\be|^{\frac13}}\,.
$$
Similarly, we get that
\begin{align*}
& |\be|^{\frac65} \|U V^3 \phi\|_{\frac65}^{\frac65} \lesssim |\be|^{\frac65} \|\phi\|^{\frac65} \bigg( k \int_{\Om_1} U^{6-\frac92} U_{t,1}^{\frac92} dx + k \int_{\Om_1} U^{6-\frac92} \bigg( \sum_{j=2}^k U_{t,j} \bigg)^{\frac92} \bigg)^{\frac45} \\
& \quad \lesssim \big[ \|\phi\|\sqrt{\de_\be}  |\be|  \big]^{\frac65} \Big( k+ \de_\be^{\frac32} (k^{\frac92} + k^{\frac52} \log(k)^{\frac92} ) \Big)^{\frac45}\,.
\end{align*}
Since $k \geq 2$ is a fixed integer, we deduce that there exists $C > 0$ (independent of $\be$ and $t$) such that
\begin{equation} \label{E.linearCouplingAux}
|\be| \|U V^3 \phi\|_{\frac65} \leq C \|\phi\|\sqrt{\de_\be}  |\be| =  \frac{C}{|\log \de_\be|} \|\phi\|\,.
\end{equation}
By using twice Lemma \ref{L.PremoselliVetois}, first with $\nu = 3/2$ and $\ga = 0$, and then with $\nu = 0$ and $\ga = 3/2$, we get
$$
|\be| \|U^3 V \psi\|_{\frac65} \leq  \frac{C}{|\log \de_\be|} \|\psi\|\,.
$$

The other terms in \eqref{E.nonlinear1} and \eqref{E.nonlinear2} can be seen to be smaller in a similar fashion and thus \eqref{E.balltoball} follows. Let us point out that the assumption $\|\phi\| + \|\psi\| \leq D \de_\be (1+|\be|)$ does not play any role in the proof of \eqref{E.conclussionLinearTerms}. However, when analyzing the nonlinear terms in \eqref{E.nonlinear1} and \eqref{E.nonlinear2}, it is needed.   

The rest of the proof is analogous so we skip the details. 
\end{proof}

\subsection{Energy expansion} Given a fixed integer $k \geq 2$, $t \in [a,b]$ and $\beta \in (-\infty, \be_\star]$, let $(\phi,\psi) = (\phi[t],\psi[t])$ be the unique solution to \eqref{E.nonlinearProjected} provided by Proposition \ref{P.nonlinear}. We consider the function
\begin{equation} \label{E.Fbeta}
F_\be: [a,b] \to \RR\,, \quad t \mapsto  \cJ_{\be}(U + \phi, V + \psi) \,,
\end{equation}
where
$$
\cJ_\be(u,v) := \frac12 \int_{\RR^3} \big( |\nabla u|^2 + |\nabla v|^2 \big)\, dx - \frac16 \int_{\RR^3} \big( u_+^6 + v_+^6 \big)\, dx - \frac{\be}{3} \int_{\RR^3} u_+^3 v_+^3 dx \quad \textup{for  } (u,v) \in \dot{H}^1 \times \dot{H}^1\,.
$$

This section is devoted to prove an expansion of $F_\be$ in terms of $\be$ and a function of $t$. As a preliminary step, we prove the following:

\begin{lemma} \label{L.firstExpansionEnergy}
For all fixed integer $k \geq 2$ and all $0 < a < b$, there exist constants $\be_\star < -\sqrt{2}$ and $C > 0$ such that, for all $\be \in (-\infty, \be_\star]$ and all $t \in [a,b]$, it holds
$$
F_\be(t) = \cJ_{\be}(U,V) + \te_{1,\be}(t)\,,  
$$
with $\te_{1,\be}  : [a,b] \to \RR$ satisfying
$$
|\te_{1,\be}(t)| \leq C \de_\be^2 (1+|\be|)^2 \quad \textup{for } t \in [a,b]\,.
$$
\end{lemma}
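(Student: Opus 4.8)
The plan is to expand the energy $\cJ_\be$ to second order around the approximate solution $(U,V)$ along the segment joining it to the genuine solution $(U+\phi,V+\psi)$, where $(\phi,\psi)=(\phi[t],\psi[t])$ is the pair produced by Proposition~\ref{P.nonlinear}, and then to estimate each piece using Proposition~\ref{P.error}, the a priori bound \eqref{E.boundNonlinear}, and the bubble-interaction estimates already exploited in Lemma~\ref{L.linearCoupling}. Since $\cJ_\be$ is of class $C^2$ on $\dot{H}^1\times\dot{H}^1$, Taylor's formula with integral remainder gives
\begin{equation*}
F_\be(t)-\cJ_\be(U,V)=\cJ_\be'(U,V)[(\phi,\psi)]+\int_0^1(1-\tau)\,\cJ_\be''\big(U+\tau\phi,\,V+\tau\psi\big)\big[(\phi,\psi),(\phi,\psi)\big]\,d\tau\,,
\end{equation*}
so it suffices to bound the two terms on the right by $C\de_\be^2(1+|\be|)^2$.

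For the first-order term I would use that $-\Delta U=U^5$ and $-\Delta U_{t,j}=U_{t,j}^5$ for every $j$, hence $-\Delta V=\sum_{j=1}^k U_{t,j}^5$; plugging this into the definition of $\cJ_\be$ and recalling \eqref{E.error}, a short computation yields
\begin{equation*}
\cJ_\be'(U,V)[(\phi,\psi)]=-\int_{\RR^3}\cE_1\,\phi\,dx-\int_{\RR^3}\cE_2\,\psi\,dx=-\langle\cE_1^*,\phi\rangle-\langle\cE_2^*,\psi\rangle\,,
\end{equation*}
whence, by Cauchy--Schwarz, Proposition~\ref{P.error} and \eqref{E.boundNonlinear},
\begin{equation*}
\big|\cJ_\be'(U,V)[(\phi,\psi)]\big|\le\big(\|\cE_1^*\|+\|\cE_2^*\|\big)\big(\|\phi\|+\|\psi\|\big)\le C\,\de_\be^2(1+|\be|)^2\,.
\end{equation*}

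For the remainder, write $u_\tau:=U+\tau\phi$, $v_\tau:=V+\tau\psi$. Expanding $\cJ_\be''(u_\tau,v_\tau)[(\phi,\psi)^2]$ one gets $\|\phi\|^2+\|\psi\|^2$ plus a sum of integrals of the form $\int U^aV^b|\phi|^c|\psi|^d$ with $a+b+c+d=6$ and $c+d\ge2$, some of them carrying an extra factor $|\be|$. The leading piece $\|\phi\|^2+\|\psi\|^2$ is $O(\de_\be^2(1+|\be|)^2)$ by \eqref{E.boundNonlinear}, and $\|\phi\|+\|\psi\|\to0$ by \eqref{E.boundNonlinear}--\eqref{E.delta_beta}. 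For the rest I would bound $(u_\tau)_+\le U+|\phi|$, $(v_\tau)_+\le V+|\psi|$ and use Hölder and Sobolev inequalities together with the boundedness of $\|U\|$ and $\|V\|$ (the interaction among the bubbles of $V$ vanishes as $\be\to-\infty$, $k$ being fixed): the terms with $a+b\le1$ are controlled by $(\|\phi\|+\|\psi\|)^{c+d}$, hence are $o(\de_\be^2(1+|\be|)^2)$ since $c+d\ge2$, and they stay so after multiplication by $|\be|$ once one uses the relation $\de_\be^{1/2}|\log\de_\be|\sim-1/\be$ from \eqref{E.delta_beta} to trade powers of $|\be|$ for powers of $\de_\be^{1/2}$. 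The only genuinely delicate contributions are those with $a+b=c+d=2$ multiplied by $|\be|$, namely $|\be|\int UV^3\phi^2$, $|\be|\int U^2V^2|\phi\psi|$ and $|\be|\int U^3V\psi^2$; these are handled exactly as in the proof of \eqref{E.conclussionLinearTerms}, since Lemma~\ref{L.PremoselliVetois} yields $|\be|\,\|UV^3\|_{3/2}+|\be|\,\|U^2V^2\|_{3/2}+|\be|\,\|U^3V\|_{3/2}\to0$ as $\be\to-\infty$, so they are $o(1)(\|\phi\|^2+\|\psi\|^2)=o(\de_\be^2(1+|\be|)^2)$. Collecting everything (after decreasing $\be_\star$ if necessary so that Propositions~\ref{P.error} and~\ref{P.nonlinear} apply) gives $|\te_{1,\be}(t)|\le C\de_\be^2(1+|\be|)^2$ uniformly in $t\in[a,b]$.

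The step I expect to be the main obstacle is precisely this last one: the quadratic form $\cJ_\be''(U,V)$ carries a coefficient $|\be|\to\infty$, so it cannot simply be absorbed into $\|\phi\|^2+\|\psi\|^2$; it is the interplay between the sharp concentration estimates of Lemma~\ref{L.PremoselliVetois} and the slow rate $\de_\be^{1/2}|\log\de_\be|\sim-1/\be$ dictated by \eqref{E.delta_beta} that makes every $\be$-weighted term subleading.
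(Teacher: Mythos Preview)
Your proposal is correct and follows the same overall scheme as the paper: Taylor's formula with integral remainder, then bounding the first-order term via Proposition~\ref{P.error} and \eqref{E.boundNonlinear}, and finally controlling the quadratic remainder by Hölder/Sobolev together with the bubble-interaction estimates from Lemma~\ref{L.PremoselliVetois}.

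There is one genuine difference in how the second-order term is handled. You bound $\|\phi\|^2+\|\psi\|^2$ directly by \eqref{E.boundNonlinear} and then estimate the remaining integrals $\int U^aV^b|\phi|^c|\psi|^d$ one by one. The paper instead tests the projected equation \eqref{E.nonlinearProjected} against $(\phi,\psi)$ to replace $\|\phi\|^2+\|\psi\|^2-5\int U^4\phi^2-5\int V^4\psi^2$ by $\int(\cE_1+\cN_1)\phi+\int(\cE_2+\cN_2)\psi$, thereby reducing everything to quantities already estimated in Proposition~\ref{P.error} and Lemma~\ref{L.linearCoupling}. Your route is more elementary (it does not use that $(\phi,\psi)$ solves anything) but requires checking the $|\be|$-weighted mixed terms from scratch; the paper's route is slightly slicker since it recycles the previous estimates wholesale. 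Both lead to the same bound $O(\de_\be^2(1+|\be|)^2)$, and your identification of the critical terms $|\be|\int UV^3\phi^2$, $|\be|\int U^2V^2|\phi\psi|$, $|\be|\int U^3V\psi^2$ and their control via $|\be|\,\de_\be^{1/2}=|\log\de_\be|^{-1}\to0$ is exactly right. One small imprecision: when you write ``the terms with $a+b\le1$ are $o(\de_\be^2(1+|\be|)^2)$ since $c+d\ge2$'', you actually need $c+d\ge5$ here (which is forced by $a+b\le1$), so the argument goes through but the stated reason is not quite the one doing the work.
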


\begin{proof}
First note that, by the fundamental theorem of calculus,
$$
F_\be(t) - \cJ_\be(U,V) = \int_0^1  \frac{d}{ds} \cJ_\be (U+s\phi, V+s\psi)  ds\,.
$$
Moreover, using Propositions \ref{P.error} and \ref{P.nonlinear}, we get that
\begin{align*}
& \bigg|\frac{d}{ds} \cJ_\be (U+s\phi, V+s\psi)  \Big|_{s=0}\bigg| =\big| \pd_1 \cJ_\be(U,V) \phi + \pd_2 \cJ_\be(U,V) \psi\big| \\
& \quad \leq \|\cE_1 \phi\|_1 + \|\cE_2 \psi\|_1 \leq \|\cE_1\|_{\frac65} \|\phi\|_6 + \|\cE_2\|_{\frac65} \|\psi\|_6 \lesssim \|\cE_1\|_{\frac65} \|\phi\| + \|\cE_2\|_{\frac65} \|\psi\| \lesssim \de_\be^2 (1+|\be|)^2\,.
\end{align*}
Note that here and during the rest of the proof the implicit constants are independent of $\be$ and $t$. Then, using again the fundamental theorem of calculus and exchanging the order of integration, we get that
\begin{equation} \label{E.expansion1}
F_\be(t)- \cJ_\be(U,V) = \int_0^1 (1-s) \frac{d^2}{ds^2} \cJ_\be(U+s\phi,V+s\psi) ds + O(\de_\be^2 (1+|\be|)^2)\,.
\end{equation}

Now, using that $(\phi,\psi)$ solves \eqref{E.nonlinearProjected}, we get that
\begin{align*}
& \frac{d^2}{ds^2} \cJ_\be(U+s\phi,V+s\psi) = \int_{\RR^3} \big(  \cN_1(\phi,\psi) + \cE_1 \big) \phi\, dx  + \int_{\RR^3} \big( \cN_2(\phi,\psi) + \cE_2 \big) \psi\, dx \\
& \quad - 5 \int_{\RR^3} \big( (U+s\phi)_+^4 - U^4 \big) \phi^2 dx - 5 \int_{\RR^3} \big( (V+s\psi)_+^4 - V^4 \big) \psi^2\, dx - 6\be \int_{\RR^3} (U+s\phi)_+^2 (V+s\psi)_+^2 \phi\psi\, dx \\
& \quad - 2 \be \int_{\RR^3} (U+s\phi)_+(V+s\psi)_+^3 \phi^2\, dx - 2\be \int_{\RR^3} (U+s\phi)_+^3 (V+s\psi)_+ \psi^2\, dx\,.
\end{align*}
Arguing as in the proofs of Proposition \ref{P.error} and Lemma \ref{L.linearCoupling}, and using Propositions \ref{P.error} and \ref{P.nonlinear}, one can estimate each term on the right hand side and get a constant $C > 0$ (independent of $\be$ and $t$) such that
\begin{equation} \label{E.expansion2}
\bigg| \frac{d^2}{ds^2} \cJ_\be(U+s\phi,V+s\psi)  \bigg| \leq C \de_\be^2 (1+|\be|)^2\,.
\end{equation}
Note that, since $\be \to - \infty$, we need to be careful with the terms involving $\be$. However, following the proof of Lemma \ref{L.linearCoupling}, if is not difficult to see that \eqref{E.expansion2} holds true.

The result follows combining \eqref{E.expansion1} and \eqref{E.expansion2}. 
\end{proof}

\begin{proposition}\label{P.expansion2}
For all fixed integer $k \geq 2$ and all $0 < a < b$, there exist constants $\be_\star < -\sqrt{2}$ and $C > 0$ such that, for all $\be \in (-\infty,\be_\star]$ and all $t \in [a,b]$, it holds
$$
F_\be(t) = \frac{k+1}{3}\|U\|_6^6 + \de_\be \big(-\mathtt{c}_1 t + \mathtt{c}_2 t^\frac32 \big) + \te_{2,\be}(t)\,,
$$
with $\te_{2,\be}:[a,b] \to \RR$ satisfying 
$$
|\te_{2,\be}(t)| \leq C  \frac{\de_\be}{|\log\de_\be|}\,,\qquad \mathtt{c}_1:= \sqrt{6}\, \pi k \sum_{j=2}^k\Big[1-\cos\Big(\tfrac{2\pi(j-1)}{k}\Big)\Big]^{-\frac12},\qquad\mathtt{c}_2:=\sqrt{6}\, \pi k \,.$$
\end{proposition}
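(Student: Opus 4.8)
The plan is to combine Lemma~\ref{L.firstExpansionEnergy}, which already gives $F_\be(t) = \cJ_\be(U,V) + \te_{1,\be}(t)$ with $|\te_{1,\be}(t)| \le C\de_\be^2(1+|\be|)^2$, with an explicit asymptotic evaluation of $\cJ_\be(U,V)$. Observe first that, by \eqref{E.delta_beta}, $|\be|\,\de_\be^{1/2}|\log\de_\be| = 1$, so $\de_\be^2(1+|\be|)^2 = O(\de_\be/|\log\de_\be|^2) = O(\de_\be/|\log\de_\be|)$; hence $\te_{1,\be}$ is already of the size of the claimed remainder, and the whole point is to expand $\cJ_\be(U,V)$. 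Since $U>0$ and $V>0$, one may drop the positive parts and split
$$
\cJ_\be(U,V) = \Big(\tfrac12\int_{\RR^3}|\nabla U|^2\,dx - \tfrac16\int_{\RR^3}U^6\,dx\Big) + \Big(\tfrac12\int_{\RR^3}|\nabla V|^2\,dx - \tfrac16\int_{\RR^3}V^6\,dx\Big) - \tfrac{\be}{3}\int_{\RR^3}U^3 V^3\,dx\,.
$$
Testing $-\Delta U = U^5$ against $U$ gives $\int|\nabla U|^2 = \int U^6 = \|U\|_6^6$, so the first bracket equals $\tfrac13\|U\|_6^6$. For the second bracket, integrating by parts with $-\Delta U_{t,j} = U_{t,j}^5$ gives $\int|\nabla V|^2 = \sum_{i,j}\int U_{t,i}^5 U_{t,j} = k\|U\|_6^6 + \sum_{i\neq j}\int U_{t,i}^5 U_{t,j}$, while the multinomial expansion of $\int(\sum_j U_{t,j})^6$ isolates the diagonal part $k\|U\|_6^6$, the part $6\sum_{i\neq j}\int U_{t,i}^5 U_{t,j}$, and a sum of mixed terms of degree at most $4$ in each single bubble; by Lemma~\ref{L.PremoselliVetois} these mixed terms are $O(\de_\be^2|\log\de_\be|) = o(\de_\be/|\log\de_\be|)$. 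Hence the second bracket equals $\tfrac13 k\|U\|_6^6 - \tfrac12\sum_{i\neq j}\int U_{t,i}^5 U_{t,j} + O(\de_\be/|\log\de_\be|)$.

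Two genuinely asymptotic interaction integrals remain, to be evaluated uniformly for $t\in[a,b]$. For the bubble-bubble interaction, a direct computation — Taylor expanding $U_{t,j}$ about $\xi_{t,i}$ over the region where $U_{t,i}^5$ concentrates, noting that the linear term integrates to zero by radial symmetry, and using $\int_{\RR^3}U^5\,dx = \tfrac{4\pi}{3}\,3^{5/4}$ — gives, for $i\neq j$,
$$
\int_{\RR^3}U_{t,i}^5 U_{t,j}\,dx = 4\sqrt3\,\pi\,\frac{t\de_\be}{|\xi_{t,i}-\xi_{t,j}|} + O(\de_\be^2)\,.
$$
Since $|\xi_{t,i}-\xi_{t,j}| = \sqrt{1-t^2\de_\be^2}\,\sqrt{2\big(1-\cos\tfrac{2\pi(i-j)}{k}\big)}$, summing over $i\neq j$ and using the symmetry of the regular $k$-gon yields $\sum_{i\neq j}\int U_{t,i}^5 U_{t,j} = 2\sqrt6\,\pi k\,t\de_\be\sum_{j=2}^k[1-\cos\tfrac{2\pi(j-1)}{k}]^{-1/2} + O(\de_\be^2)$, so $-\tfrac12\sum_{i\neq j}\int U_{t,i}^5 U_{t,j} = -\mathtt{c}_1\,t\,\de_\be + O(\de_\be^2)$. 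For the coupling term, expand $V^3 = \sum_j U_{t,j}^3 + 3\sum_{i\neq j}U_{t,i}^2 U_{t,j} + 6\sum_{i<j<l}U_{t,i}U_{t,j}U_{t,l}$. Every summand containing at least two distinct bubbles is $O(\de_\be^{3/2})$ uniformly in $t$, and — crucially — \emph{without} a logarithmic factor, because $\int_B U_{t,i}^2$ over a fixed ball is $O(\de_\be)$ rather than $O(\de_\be|\log\de_\be|)$; hence those terms contribute $O(|\be|\de_\be^{3/2}) = O(\de_\be/|\log\de_\be|)$ after multiplication by $\be$. The diagonal part is $k\int_{\RR^3}U^3 U_{t,1}^3$, which is the delicate term: substituting $x = t\de_\be\,y + \xi_{t,1}$, freezing $U(x)^3 \approx U(\xi_{t,1})^3 = 3^{3/4}2^{-3/2}(1+O(\de_\be^2))$ over the logarithmically long range $|y|\lesssim 1/\de_\be$, and using $\int_{|y|<R}(1+|y|^2)^{-3/2}\,dy = 4\pi\log R + O(1)$, one obtains
$$
\int_{\RR^3}U^3 U_{t,1}^3\,dx = 3\sqrt6\,\pi\,t^{3/2}\de_\be^{3/2}|\log\de_\be| + O(\de_\be^{3/2})\,.
$$
Multiplying by $-\tfrac{\be}{3}k$ and using $\de_\be^{1/2}|\log\de_\be| = -1/\be$ from \eqref{E.delta_beta} turns this into $\mathtt{c}_2\,t^{3/2}\de_\be + O(|\be|\de_\be^{3/2}) = \mathtt{c}_2\,t^{3/2}\de_\be + O(\de_\be/|\log\de_\be|)$, with $\mathtt{c}_2 = \sqrt6\,\pi k$.

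Collecting the three contributions gives $\cJ_\be(U,V) = \tfrac{k+1}{3}\|U\|_6^6 + \de_\be(-\mathtt{c}_1 t + \mathtt{c}_2 t^{3/2}) + O(\de_\be/|\log\de_\be|)$, and adding $\te_{1,\be}(t)$ yields the statement with $\te_{2,\be}(t)$ the sum of all the remainders, bounded by $C\de_\be/|\log\de_\be|$; the bookkeeping of all the $O$-terms above is carried out as in the proofs of Proposition~\ref{P.error} and Lemma~\ref{L.linearCoupling}, invoking Lemma~\ref{L.PremoselliVetois}. The main obstacle is the precise evaluation of $k\int_{\RR^3}U^3 U_{t,1}^3$: it is the only place where the three-dimensional logarithmic divergence of the interaction enters, and it is exactly this $|\log\de_\be|$ which — after multiplication by the large parameter $\be\sim -(\de_\be^{1/2}|\log\de_\be|)^{-1}$ — produces a contribution of exact order $\de_\be$ rather than of lower order, so one must both pin down the constant $3\sqrt6\,\pi$ and control the remainder at the level $O(\de_\be^{3/2})$. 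A closely related delicate point is to check that the secondary terms of $V^3$ (notably $\int U^3 U_{t,i}^2 U_{t,j}$) carry no hidden $|\log\de_\be|$, since after multiplication by $\be$ such a factor would overshoot the claimed remainder $\de_\be/|\log\de_\be|$.
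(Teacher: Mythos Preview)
Your proof is correct and follows essentially the same route as the paper: reduce to $\cJ_\be(U,V)$ via Lemma~\ref{L.firstExpansionEnergy}, split into the $U$-part, the $V$-part, and the coupling $-\tfrac{\be}{3}\int U^3V^3$, and then isolate the two interaction integrals. The paper cites \cite[Lemma~3.4]{PrVe2019} for the $V$-part whereas you compute $\int U_{t,i}^5 U_{t,j}$ by hand, and the paper organizes the coupling term via the fundamental domain $\Om_1$ and Lemma~\ref{L.PremoselliVetois} whereas you expand $V^3$ multinomially over $\RR^3$; but the key step --- the evaluation of $\int U^3 U_{t,1}^3$ through the logarithmic divergence of $\int_{|y|<R}(1+|y|^2)^{-3/2}dy$ and the conversion $-\be\,\de_\be^{1/2}|\log\de_\be|=1$ --- is identical, as is your correct observation that the secondary terms of $V^3$ carry no hidden $|\log\de_\be|$.
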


\begin{proof}
First note that we can decompose $\cJ_\be(U,V)$ as 
\begin{align*}
\cJ_\be(U,V) & = \frac13 \|U\|_6^6 + \frac12 \int_{\RR^3} |\nabla V|^2 dx - \frac16 \int_{\RR^3} V^6 \, dx - \frac{\be}{3} \int_{\RR^3} U^3 V^3 dx =: \frac13 \|U\|_6^6 + \cI_{1,\be}(V) + \cI_{2,\be}(U,V)\,.
\end{align*}
Having at hand this decomposition, we analyze each $\cI_{i,\be}$ separately. 

On one hand, following the proof of \cite[Lemma 3.4]{PrVe2019}, we see that
\begin{align*}
\cI_{1,\be}(V) & = \frac{k}{3} \|U\|_6^6  - \frac12 \sum_{j \neq i} \int_{\RR^3} U_{t,j}^5 U_{t,i} dx - \frac16 \int_{\RR^3} \bigg( V^6 - \sum_{j=1}^k U_{t,j}^6 - 6 \sum_{j \neq i} U_{t,j}^5 U_{t,i} \bigg) dx \\
& =  \frac{k}{3} \|U\|_6^6  - \mathtt{c}_1 t\, \de_\be + O(\de_\be^2) \,.
\end{align*}
On the other hand, taking into account the definition of $\Om_1$ given in \eqref{E.Omega1}, and using that $U$ and $V$ satisfy \eqref{E.invarianceRotation} and Lemma \ref{L.PremoselliVetois}, we get that
\begin{align*}
& \cI_{2,\be}(U,V)  = -\frac{\be k}{3} \int_{\Om_1} U^3 U_{t,1}^3 dx - \frac{\be k}{3} \int_{\Om_1} U^3 \bigg( \sum_{j=2}^k U_{t,j}\bigg)^3 dx\\
& \quad - \be k \int_{\Om_1} U^3 U_{t,1}  \bigg( \sum_{j=2}^k U_{t,j}\bigg)^2 dx - \be k \int_{\Om_1} U^3 U_{t,1}^2 \bigg( \sum_{j=2}^k U_{t,j} \bigg) dx  =  -\frac{\be k}{3} \int_{\Om_1} U^3 U_{t,1}^3 dx + O \bigg( \frac{\de_\be}{|\log\de_\be|} \bigg)\,. 
\end{align*}
Hence, taking into account Lemma \ref{L.firstExpansionEnergy}, to conclude the proof, we just have to show that  
\begin{equation} \label{E.conclussionEnergyExpansion}
-\frac{\be k}{3} \int_{\Om_1} U^3 U_{t,1}^3 dx = -\sqrt{6}\, k \pi \beta(\de_\be t)^{\frac32}|\log\de_\be|+O\Big(|\beta|\de_\be ^{\frac32}\Big) = \sqrt{6}\, k \pi \de_\be t^{\frac32} + O \bigg( \frac{\de_\be}{|\log\de_\be|} \bigg)\,.
\end{equation}

First of all, choosing $r_0>0$ small enough so that $B_{r_0}(\xi_{t,1}) \cap B_{r_0}(\xi_{t,j}) = \emptyset$ for all $j \neq 1$, and arguing as in the proof of \eqref{E.Zno1}, we get that
$$
-\frac{\be k}{3} \int_{\Om_1 \setminus B_{r_0}(\xi_{t,1})} U^3 U_{t,1}^3 dx = O \bigg( \frac{\de_\be}{|\log\de_\be|} \bigg)\,.
$$
On the other hand, observe that
\begin{equation} \label{E.conclussionEEsubs}
\begin{aligned}
& - \frac{\be k}{3} \int_{B_{r_0}(\xi_{t,1})} U^3 U_{t,1}^3 dx = -\frac{\be k}{3} (3t\de_\be)^\frac32 \int_{B_{\frac{r_0}{t\de_\be}}(0)} \frac{1}{(1+|t\de_\be y + \xi_{t,1}|^2)^\frac32} \frac{1}{(1+|y|^2)^\frac32} \, dy \\
& \quad = -\frac{\sqrt{3} \be k (t\de_\be)^\frac32}{(1+|\xi_{t,1}|^2)^\frac32}  \int_{B_{\frac{r_0}{t\de_\be}}(0)} \frac{dy}{(1+|y|^2)^\frac32} \\
& \qquad - \sqrt{3} \be k (t\de_\be)^\frac32 \int_{B_{\frac{r_0}{t\de_\be}}(0)} \bigg( \frac{1}{(1+|t\de_\be y + \xi_{t,1}|^2)^\frac32} - \frac{1}{(1+|\xi_{t,1}|^2)^\frac32} \bigg) \frac{dy}{(1+|y|^2)^\frac32}\,.
\end{aligned}
\end{equation}

Using polar coordinates, we get that
\begin{equation} \label{E.conclussionEEmain}
 \int_{B_{\frac{r_0}{t\de_\be}}(0)} \frac{dy}{(1+|y|^2)^\frac32} = 4\pi |\log \de_\be| + O(1)\,.
\end{equation}
Also, using that
$$
\frac{1}{(1+|t\de_\be y + \xi_{t,1}|^2)^\frac32} = \frac{1}{(1+|\xi_{t,1}|^2)^\frac32} + O(\de_\be^2 |y|^2 + 2 \de_\be y_1)\,,
$$
it follows that
\begin{equation} \label{E.conclussionEElower}
 - \sqrt{3} \be k (t\de_\be)^\frac32 \int_{B_{\frac{r_0}{t\de_\be}}(0)} \bigg( \frac{1}{(1+|t\de_\be y + \xi_{t,1}|^2)^\frac32} - \frac{1}{(1+|\xi_{t,1}|^2)^\frac32} \bigg) \frac{dy}{(1+|y|^2)^\frac32} = O \bigg( \frac{\de_\be}{|\log\de_\be|} \bigg)\,.
\end{equation}
By substituting \eqref{E.conclussionEEmain} and \eqref{E.conclussionEElower} into \eqref{E.conclussionEEsubs}, we get that \eqref{E.conclussionEnergyExpansion} holds true and the proof is concluded.  
\end{proof}

\begin{remark} \label{R.tstar}
For all fixed integer $k \geq 2$, the function 
$$
g: \RR \to \RR\,, \quad t \mapsto  - \mathtt{c}_1  t +  \mathtt{c}_2 t^\frac32\,,
$$
has a global minimum at 
\begin{equation} \label{E.tstar}
t_{\star} := \Big( \frac{2 \mathtt{c}_1}{3 \mathtt{c}_2} \Big)^2\,,
\end{equation}
which is non-degenerate (since $g''(t_\star)>0$) and so isolated.

\end{remark}

\subsection{Proof of Theorem \ref{T.mainResult2eq}} \label{S.proofTh21} We now have all the needed ingredients to prove Theorem \ref{T.mainResult2eq}. Here, given a fixed integer $k \geq 2$, $0 < a < b$, $t \in [a,b]$, $\be_\star < - \sqrt{2}$ as in Proposition \ref{P.nonlinear} and $\beta \in (-\infty, \be_\star]$, we let $(\phi,\psi) = (\phi[t],\psi[t])$ be the unique solution to \eqref{E.nonlinearProjected} provided by Proposition \ref{P.nonlinear}.

\begin{proof}[Proof of Theorem \ref{T.mainResult2eq}] Let $g \in C^1(\RR)$ be the function defined in Remark \ref{R.tstar}. We know that $g$ has a strict global minimum at a point $t_{\star} > 0$. It then follows from Proposition \ref{P.expansion2} that, for $|\be|$ sufficiently large, there exists a critical point $t_{\star,\be} > 0$ of the function $F_\be$ given in \eqref{E.Fbeta} such that $t_{\star,\be} \to t_{\star}$ as $\be \to - \infty$. We then define $(u_\be, v_\be) := (U + \phi[t_{\star,\be}], V[t_{\star,\be}] + \psi[t_{\star,\be}])$.

On one hand, since $(u_\be, v_\be)$ is a solution to \eqref{E.nonlinearProjected}, there exists $\cc(t_{\star,\be}) \in \RR$ such that
\begin{equation} \label{E.conclussionTh21}
\left\{
\begin{aligned}
& \pd_1 \cJ_\be(u_\be, v_\be) = 0\,, \\
& \pd_2 \cJ_\be(u_\be, v_\be) = \cc(t_{\star,\be}) \langle Z_{k,t}, \cdot \rangle\,.
\end{aligned}
\right.
\end{equation}
On the other hand, using that $t_{\star,\be} > 0$ is a critical point of $F_\be$, we obtain that
\begin{equation} \label{E.criticalPointFbeta}
\begin{aligned}
0 & = F_\be'(t_{\star,\be}) = \pd_1 \cJ_\be(u_\be, v_\be) \frac{d}{dt} \big( U + \phi \big)\Big|_{t=t_{\star,\be}} + \pd_2 \cJ_\be(u_\be, v_\be) \frac{d}{dt} \big( V + \psi \big)\Big|_{t=t_{\star,\be}} \\
& = \pd_2 \cJ_\be(u_\be, v_\be) \frac{d}{dt} \big( V + \psi \big)\Big|_{t=t_{\star,\be}} = \cc(t_{\star,\be}) \bigg\langle Z_{k,t_{\star,\be}}, \frac{d}{dt} \big( V + \psi \big)\Big|_{t=t_{\star,\be}}  \bigg\rangle \,.
\end{aligned}
\end{equation}

Now observe that, by direct computations,
$$
\frac{d}{dt} V = \frac{1}{t} Z_{k,t} - \frac{3^{\frac14} t^{\frac32} \de_\be^{\frac52}}{1-t^2\de_\be^2} \sum_{j=1}^k \frac{(x-\xi_{t,j})\cdot \xi_{t,j}}{(t^2 \de_\be^2 + |x-\xi_{t,j}|^2)^{\frac32}}\,.
$$
Thus, arguing as in \textit{Step 2} in the proof of Proposition \ref{P.linear}, we get that
\begin{equation} \label{E.criticalPointFbetaMainOrder}
\bigg\langle Z_{k,t_{\star,\be}}, \frac{d}{dt} V \Big|_{t=t_{\star,\be}} \bigg\rangle = \frac{15\sqrt{3}\pi^2 k}{64\, t_{\star,\be}}  + O(\de_\be)\,.
\end{equation}

Next, using that $\psi \in \cK_{k,t}^{\perp}$, we get that
$$
0 = \frac{d}{dt} \langle Z_{k,t}, \psi \rangle = \bigg\langle \frac{d}{dt} Z_{k,t}, \psi \bigg\rangle + \bigg\langle Z_{k,t}, \frac{d}{dt} \psi \bigg\rangle\,,
$$
and so that
$$
\bigg\langle Z_{k,t_{\star,\be}}, \frac{d}{dt} \psi \Big|_{t= t_{\star,\be}} \bigg\rangle = - \bigg\langle \frac{d}{dt} Z_{k,t}\Big|_{t = t_{\star,\be}}, \psi[t_{\star,\be}] \bigg\rangle\,.
$$
Hence, using Cauchy-Schwarz inequality and \eqref{E.boundNonlinear}, we get that
\begin{align*}
\bigg|  \bigg\langle Z_{k,t_{\star,\be}}, \frac{d}{dt} \psi \Big|_{t= t_{\star,\be}} \bigg\rangle \bigg| \leq \big\|\psi[t_{\star,\be}]\big\| \bigg\| \frac{d}{dt} Z_{k,t} \Big|_{t=t_{\star,\be}} \bigg\| \lesssim \de_\be (1+|\be|) \bigg\| \frac{d}{dt} Z_{k,t} \Big|_{t=t_{\star,\be}} \bigg\|\,.
\end{align*}
Moreover, direct but tedious computations show that
$$
\bigg\| \frac{d}{dt} Z_{k,t} \Big|_{t=t_{\star,\be}} \bigg\| \lesssim 5 \sum_{j=1}^k \bigg\| 4 U_{t_{\star,\be},j}^3 \Big(\frac{d}{dt} U_{t,j} \Big|_{t = t_{\star,\be}} \Big) Z_{t_{\star,\be} \de_\be, \xi_{t_{\star,\be},j}}^{(0)} + U_{t_{\star,\be},j}^4 \Big( \frac{d}{dt} Z_{t\de_{\be}, \xi_{t,j}}^{(0)} \Big|_{t = t_{\star,\be}} \Big) \bigg\|_{\frac65} = O(1)\,.
$$
We then have that
\begin{equation} \label{E.criticalPointFbetaRemainder}
\bigg|  \bigg\langle Z_{k,t_{\star,\be}}, \frac{d}{dt} \psi \Big|_{t= t_{\star,\be}} \bigg\rangle \bigg| = O \bigg( \frac{\sqrt{\de_\be}}{|\log \de_\be|} \bigg)\,.
\end{equation}
Gathering \eqref{E.criticalPointFbeta}--\eqref{E.criticalPointFbetaRemainder}, we get that
$$
0 = \cc(t_{\star,\be}) \left( \frac{15\sqrt{3}\pi^2 k}{64\, t_{\star,\be}} + O \bigg( \frac{\sqrt{\de_\be}}{|\log \de_\be|} \bigg) \right)\,,
$$
and so that, for $|\be|$ large enough, $\cc(t_{\star,\be}) = 0$. The result then follows combining this fact with \eqref{E.conclussionTh21}.
\end{proof}
 
\section{The case $m\ge3$} \label{S.generalcase}
The goal of this section is to prove Theorem \ref{T.mainResultIntro} in the general case $m\geq 3$. 
We consider the general system with $m=q+1 \geq 3$ components
\begin{equation} \label{1}
-\Delta u_i= u_i^5+ \sum\limits_{j=1\atop i\not=j}^{q+1} \beta_{ij}u_i ^2u_j^3\,, \quad u_i \gneqq 0 \quad \hbox{in}\ \mathbb R^3, \quad i \in \{1,...,q+1\}\,,\end{equation}
where $\beta_{ij}$ are coupling constants satisfying \eqref{E.beta}. Inspired by \cite{ChMePi2023}, the strategy will be to reduce this system into a new one of only two components, where we will apply a similar approach to that of Section \ref{S.m=2}. 

Let $ {\mathscr R}_\theta$ with $\theta\in\RR$ be given in \eqref{E.rotmatrix} and $r,k\in\NN$. As already mentioned in the Introduction, we set
\begin{equation}\label{E.rotation}
{\mathscr R}_{r,k}:={\mathscr R}_{\frac{(r-1)}{q}\frac{2\pi}{k}}\,,
\end{equation}
and consider the \textit{nonlocal} system 
\begin{equation}\label{2222}
\left\{
\begin{aligned}
&-\Delta u=u^5+\beta u^2 v^3 + \be u^2  \sum\limits_{r=2}^qv_r^3\quad && \hbox{in}\ \mathbb R^3\,,\\
&-\Delta v=v^5+\beta u^3 v^2+\alpha v^2 \sum\limits_{r=2}^qv_r^3\quad && \hbox{in}\ \mathbb R^3\,,
\end{aligned}
\right. \qquad \qquad u, v \gneqq 0 \quad \textup{in } \RR^3\,,
\end{equation}
where 
\begin{align*} 
v_r(x):=v({\mathscr R}_{r,k} x) \quad \hbox{for } r \in \{1,\dots,q\}\,.
\end{align*}
Then, we point out that, if $(u,v)$ is a solution to \eqref{2222} and satisfies 
\begin{equation}\label{E.sim}
u(x)=u({\mathscr R}_{r,k}x)\quad \hbox{for all } r \in \{2,\dots,q\}\,, \quad  \hbox{and}\quad  v(x)=v(\mathscr{R}_{\frac{2\pi}{k}}x)\,,
\end{equation}
the vector $(u_1,\dots, u_{q+1})$, with 
\begin{align*}
u_i(x)=v({\mathscr R}_{i,k} x)\quad \hbox{for all}\ i\in \{1,\dots,q\}\,, \quad \hbox{and}\quad u_{q+1}(x)=u(x)\,, 
\end{align*}
is a solution to \eqref{1}.  Indeed, by \eqref{E.sim}, we can adapt the computations in \cite[Section 3]{ChMePi2023} to get
\begin{equation}\label{E.reduction}
\sum\limits_{r=2}^qv_r^3({\mathscr R}_{i,k}x) = \sum\limits_{j=1\atop i\not=j}^{q}u_j^3(x) \quad \textup{for } i \in \{1, \ldots,q\}\,, 
\end{equation}
and therefore
\begin{equation*}
-\Delta u_i(x)=-\Delta v({\mathscr R}_{i,k} x)=\bigg[v^5+\beta v^2u^3+\alpha v^2 \sum\limits_{r=2}^qv_r^3\bigg]({\mathscr R}_{i,k} x)=\bigg[u_i^5+\beta u_i^2 u_{q+1}^3+\alpha u_i^2\sum\limits_{j=1\atop i\not=j}^{q}u_j^3\bigg](x)\,,
\end{equation*}
for $i \in \{1,\ldots,q\}$. Having at hand \eqref{2222}, we will prove the following result, from which Theorem \ref{T.mainResultIntro} in the case where $m \geq 3$ follows:
\begin{theorem}\label{T.mainResultmgeq3}
For any fixed integer $k \geq 2$ and any $\al \in \RR$, there exists $\beta_\star < -\sqrt{2}$ such that, for each $\beta \in (-\infty,\beta_\star]$,  \eqref{2222} has a finite energy solution $(u_\beta,v_\beta)$ of the form
$$
u_\beta = U + \phi_\beta \,, \quad v_\beta = \sum_{j=1}^k \frac{1}{\tilde{\de}_\be^\frac12}\, U \Big( \frac{\,\cdot\,-\xi_{\be,j}}{\tilde{\de}_\beta} \Big) + \psi_\beta\,.
$$
Here, $U$ is defined in \eqref{E.bubble}, $\phi_\beta, \psi_\beta \in \dot{H}^1$ and $(\xi_{\beta,j})_j \subset \RR^3$ satisfy
$$
\|\phi_\beta\|_{\dot{H}^1} + \|\psi_\beta\|_{\dot{H}^1} \to 0 \quad \textup{ and } \quad  \xi_{\beta,j} \to  \Big( \e^{\frac{2\pi(j-1)}{k}i}, 0 \Big)\,,   \quad \textup{as } \beta \to - \infty\,,
$$
and $\tilde{\de}_\beta \in (0,1/\e^2)$ satisfies 
$$
\tilde{\de}_\be^{\frac12} |\log \tilde{\de}_\be| \sim -\frac{1}{\beta}\,.
$$
\end{theorem}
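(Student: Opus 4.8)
The plan is to rerun the Ljapunov--Schmidt scheme of Section~\ref{S.m=2} for the nonlocal system \eqref{2222}, inside a function space that encodes the symmetries \eqref{E.sim}. Concretely, I look for $(u_\be,v_\be)=(U+\phi_\be,V+\psi_\be)$ with $V=\sum_{j=1}^k U_{t,j}$ and $\de_\be$ as in \eqref{E.delta_beta}, where $\psi_\be$ ranges over $X_k$ (see \eqref{E.Xk}) and $\phi_\be$ ranges over the subspace $X_k^{(1)}\subset\dot H^1$ of functions that are even in $x_2,x_3$, invariant under the Kelvin transform \eqref{E.invarianceKelvin}, and invariant under the rotation ${\mathscr R}_{\frac1q\frac{2\pi}{k}}$ (equivalently, under all ${\mathscr R}_{r,k}$, $r\in\{2,\dots,q\}$). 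One checks, exactly as in the Introduction, that if $v\in X_k$ then each $v_r=v({\mathscr R}_{r,k}\,\cdot\,)$ is again even in $x_2,x_3$, Kelvin-invariant and ${\mathscr R}_{\frac{2\pi}{k}}$-invariant, and that $\sum_r v_r^3$ is ${\mathscr R}_{\frac1q\frac{2\pi}{k}}$-invariant; hence $X_k^{(1)}\times X_k$ is preserved by $(-\Delta)^{-1}$ applied to the right-hand side of \eqref{2222}, and the ansatz $(U,V)$ belongs to it. Writing the system for $(\phi,\psi)$ as in \eqref{E.systemphipsi1}--\eqref{E.nonlinearCoupling}, the linear parts $\cL_1\phi=-\Delta\phi-5U^4\phi$ and $\cL_2\psi=-\Delta\psi-5V^4\psi$ are \emph{unchanged}, and the only new ingredients are the contributions of $\be u^2\sum_{r=2}^q v_r^3$ and $\al v^2\sum_{r=2}^q v_r^3$ to the error terms $(\cE_1,\cE_2)$ and the coupling terms $(\cN_1,\cN_2)$.

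The key point is that these new contributions are harmless. For the $\be$-pieces in the first equation: since $U$ is radial and $\phi\in X_k^{(1)}$ is ${\mathscr R}_{r,k}$-invariant, the change of variables $y={\mathscr R}_{r,k}x$ gives $\|U^2 v_r^3\|_{\frac65}=\|U^2V^3\|_{\frac65}$, $\|U\,v_r^3\,\phi\|_{\frac65}=\|U V^3\phi\|_{\frac65}$, $\|U^2 v_r^2\psi_r\|_{\frac65}=\|U^2V^2\psi\|_{\frac65}$, and so on (here $\psi_r=\psi({\mathscr R}_{r,k}\,\cdot\,)$, so $\|\psi_r\|=\|\psi\|$). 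Thus every term generated by the new $\be$-pieces is, after this rotation, \emph{identical} to a term already controlled in Proposition~\ref{P.error} and in the proof of Lemma~\ref{L.linearCoupling}, and the corresponding bounds hold verbatim up to a harmless factor $q=m-1$. For the $\al$-pieces in the second equation: the bubbles making up $v_r$ sit on a regular polygon obtained from that of $V$ by a rotation of angle $\frac{r-1}{q}\frac{2\pi}{k}\in(0,\frac{2\pi}{k})$, so for $k$ fixed the two polygons stay at mutual distance bounded below; hence $\int V^p V_r^{p'}$-type integrals have the size of \emph{far} bubble interactions (estimated via Lemma~\ref{L.PremoselliVetois} together with the pointwise bound $U_{t\de_\be,\eta}\lesssim\sqrt{\de_\be}$ away from $\eta$), e.g.\ $\|V^2V_r^3\|_{\frac65}=O(\de_\be^{2})$ and $\|V V_r^3\psi\|_{\frac65}=O(\de_\be\|\psi\|)$. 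Since $\al$ is a \emph{fixed} real number, all $\al$-terms are of genuinely lower order, which is precisely why $\al$ may be chosen arbitrarily.

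Granted this, the rest is a copy of Section~\ref{S.m=2}. Proposition~\ref{P.error} holds for the modified $(\cE_1,\cE_2)$, giving $\|\cE_1^*\|+\|\cE_2^*\|\lesssim\de_\be(1+|\be|)$. The linear theory of Proposition~\ref{P.linear} applies unchanged, because $\cL_1,\cL_2$ are unchanged and the symmetries of $X_k^{(1)}\times X_k$ still annihilate the kernel $\Span\{Z^{(\ell)}:\ell\in\{0,\dots,3\}\}$ of the linearized Yamabe operator: the rotation invariance (by $\frac1q\frac{2\pi}{k}$ on the $u$-slot, by $\frac{2\pi}{k}$ on the $v$-slot) together with evenness in $x_2,x_3$ and Kelvin invariance eliminate $Z^{(1)},Z^{(2)},Z^{(3)}$ and $Z^{(0)}$ exactly as in Steps~1 and~3 there. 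Then Proposition~\ref{P.nonlinear} follows from the contraction mapping theorem via the analogue of Lemma~\ref{L.linearCoupling}, where the new $\be$- and $\al$-terms are absorbed using the two observations above; this yields, for each $t\in[a,b]$ and $\be\in(-\infty,\be_\star]$, a unique $(\phi[t],\psi[t])\in X_k^{(1)}\times\cK_{k,t}^{\perp}$ with $\|\phi[t]\|+\|\psi[t]\|\lesssim\de_\be(1+|\be|)$, depending $C^1$ on $t$.

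For the finite-dimensional step I use the variational structure inherited from \eqref{1}: by the reduction \eqref{E.reduction}, solutions of \eqref{2222} lying in $X_k^{(1)}\times X_k$ correspond to critical points, in the associated symmetric subspace, of the energy of \eqref{1}; call $\cG_\be(u,v)$ this restricted functional and $F_\be(t):=\cG_\be(U+\phi[t],V+\psi[t])$. A direct computation gives $\partial_1\cG_\be=0$ and $\partial_2\cG_\be\in\Span\{\langle Z_{k,t},\cdot\rangle\}$ at $(U+\phi[t],V+\psi[t])$, and, following Lemma~\ref{L.firstExpansionEnergy} and Proposition~\ref{P.expansion2}, $F_\be(t)=\mathrm{const}+q\,\de_\be\big(-\mathtt{c}_1 t+\mathtt{c}_2 t^{3/2}\big)+O(\de_\be/|\log\de_\be|)$, the factor $q$ coming from the $q$ copies of the $v$-energy, the $\be$-pieces in the $u$-equation contributing only through this factor (since $\sum_i\int v_i^3 u^3=q\int v^3u^3$ on the symmetric class), and the $\al$-pieces contributing only to the remainder. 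By Remark~\ref{R.tstar} the function $-\mathtt{c}_1 t+\mathtt{c}_2 t^{3/2}$ has a non-degenerate minimum at $t_\star>0$, so for $|\be|$ large $F_\be$ has a critical point $t_{\star,\be}\to t_\star$; exactly as in Section~\ref{S.proofTh21} — testing the $\psi$-equation against $Z_{k,t}$ and using that $\langle Z_{k,t},\frac{d}{dt}(V+\psi)\rangle=\frac{15\sqrt3\,\pi^2 k}{64\,t}+o(1)\neq0$ — the Lagrange multiplier vanishes at $t_{\star,\be}$, so $(U+\phi[t_{\star,\be}],V+\psi[t_{\star,\be}])$ solves \eqref{2222} and, through \eqref{E.reduction}, Theorem~\ref{T.mainResultmgeq3} and with it Theorem~\ref{T.mainResultIntro} for $m\ge3$ follow. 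The main obstacle is the second paragraph: the new $\be$-coupling terms are genuinely of the \emph{same} order as the terms already present (because $\be\to-\infty$), and one must verify they are still contractive; the radial symmetry of $U$, which makes them literally equal to the already-estimated quantities, is what makes this work, while the $\al$-terms, being a fixed multiple of far bubble interactions, are comparatively benign.
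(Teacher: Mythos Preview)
The overall scheme is correct and matches the paper closely: you work in $\tilde X_k\times X_k$ (your $X_k^{(1)}$ is precisely the paper's $\tilde X_k=X_{qk}$), and the error, linear and nonlinear estimates carry over because the new $\beta$-terms are rotational copies of those already treated (via the radiality of $U$) while the $\alpha$-terms are far-bubble interactions of strictly lower order. One minor slip: individual $v_r$'s are \emph{not} even in $x_2$; one has $v_r(x_1,-x_2,x_3)=v_{q-r+2}(x_1,x_2,x_3)$, so only the \emph{sum} $\sum_{r} v_r^\gamma$ (and likewise $\sum_r(V_r+\psi_r)^\gamma$) is even in $x_2$. This is exactly what is needed and is checked in the paper, so your conclusion stands.

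Where you genuinely diverge from the paper is in the finite-dimensional step. The paper explicitly states that the variational approach ``is not available here'' and instead uses the direct method: it computes $\tilde\cc(t)=\langle\cL_2^*\psi-\tilde\cE_2^*-\tilde\cN_2^*,Z_{k,t}\rangle$ and shows $\tilde\cc(t)=(-\tilde{\mathtt c}_1 t+\tilde{\mathtt c}_2 t^{3/2})\de_\be+\text{l.o.t.}$, then finds a root. You observe instead that although \eqref{2222} as written is not an Euler--Lagrange system (the term $v^2v_r^3$ is asymmetric), it \emph{is} the restriction to the symmetric class of the variational system \eqref{1}: for $\psi\in X_k$ the cross terms $\int v^3 v_r^2\psi_r$ become $\int v^2 v_{q-r+2}^3\psi$ after the rotation $x\mapsto\mathscr R_{r,k}^{-1}x$, so the sum over $r$ reproduces exactly $q$ times the second equation of \eqref{2222}. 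This lets you rerun Lemma~\ref{L.firstExpansionEnergy}, Proposition~\ref{P.expansion2} and Section~\ref{S.proofTh21} verbatim for the restricted energy $\cG_\be$, the $\alpha$-contribution $\int V^3V_r^3=O(\de_\be^3|\log\de_\be|)$ being absorbed in the remainder. Both routes land on the same leading-order balance and the same $t_\star$ (indeed $\tilde{\mathtt c}_1=\mathtt c_1$, $\tilde{\mathtt c}_2=\tfrac32\mathtt c_2$, consistent with differentiating $-\mathtt c_1 t+\mathtt c_2 t^{3/2}$). Your approach has the virtue of staying closer to Section~\ref{S.m=2} and clarifying why the same reduced problem reappears; the paper's direct method avoids introducing $\cG_\be$ and the Palais-type justification at the cost of one explicit projection computation.
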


\begin{remark}\label{R.3}
Some comments on Theorem \ref{T.mainResultmgeq3} are in order:
\begin{itemize}
\item[(i)]Since $U$ is a radial function, it trivially satisfies $U(x)=U({\mathscr R}_{r,k} x)$ for every $r \in \{ 2,\ldots,q\}$. Furthermore, it is straightforward to check that
$$ \widetilde{V}(x):= \sum_{j=1}^k \frac{1}{\tilde{\de}_\be^\frac12}\, U \Big( \frac{\,x-\xi_{\be,j}}{\tilde{\de}_\beta} \Big) \,,$$
satisfies that $\widetilde{V}(\mathscr{R}_{\frac{2\pi}{k}} x ) = \widetilde{V}(x)$ for all $x \in \RR^3$. Thus, if one finds $\phi_\beta,\psi_\beta$ in subspaces of $\dot{H}^1$ that respect the symmetries, then 
$$
u_{i,\beta}(x)=v_\beta ({\mathscr R}_{i,k} x) \quad \textup{for } i\in \{1,\ldots q\}\,,\quad u_{q+1,\beta}(x)=u_\beta(x)\,,
$$
solve the general system \eqref{1} and Theorem \ref{T.mainResultIntro} follows. \smallbreak
\item[(ii)] The choice of the rotation angle in \eqref{E.rotation} guarantees that $u_{i,\beta}\neq u_{j,\beta}$ for every $i\neq j$, $i,j \in \{1,\ldots, q+1\}$. Furthermore, there are no concentric bubbles in the construction; the minimum distance between two of them is of order $(qk)^{-1}$. \smallbreak
\item[(iii)] In \cite{ChMePi2023}, to reduce the system into a one of two equations,  even symmetry of $v$ is required. To ensure this, the authors need to impose $k$ even in the main theorem.  In our construction the reduction works only asking \eqref{E.sim} (see computation \eqref{E.reduction}), so no extra condition on $k$ is assumed.\smallbreak
\item[(iv)] System \eqref{2222} is different from \eqref{E.system2eq}, so a new proof is required. \smallbreak
\item[(v)] Analogously to the case $m=2$, since we are searching for nonnegative solutions we will deal with
\begin{equation}\label{E.problem+}\left\{\begin{aligned}
&-\Delta u=u_+^5+\beta u_+^2  \sum\limits_{r=1}^q(v_r)_+^3\quad \hbox{in}\ \mathbb R^3,\\
&-\Delta v=v_+^5+\beta v_+^2u_+^3+\alpha v_+^2 \sum\limits_{r=2}^q(v_r)_+^3\quad \hbox{in}\ \mathbb R^3,
\end{aligned}\right.
\end{equation}
where $s_+$ stands for the positive part. Indeed, it is straightforward to check that the solutions to this problem will solve \eqref{2222} (see \eqref{E.system2eq+} for more details).
\end{itemize} 
\end{remark}
As Theorem \ref{T.mainResultmgeq3} suggests,  we will look for a solution to \eqref{E.problem+}
 like \eqref{E.ansatz}, i.e.
 $u=U+\phi,\ v=V+\psi$ 
where $U$  and $V$ are defined in \eqref{E.sumofbubbles}, and $\phi$ and $\psi$ are small and belong to appropriate Banach spaces.  In order to inherit the symmetries in \eqref{E.sim} we need to define a new one:
$$\tilde{X}_k:=\big\{\phi\in \dot{H}^1(\RR^3):\phi \mbox{ satisfies }\eqref{E.invarianceEven},\eqref{E.invarianceRotation},\eqref{E.invarianceKelvin} \mbox{ and }\phi(x)=\phi ({\mathscr R}_{r,k} x)  \textup{ for } r \in \{2,\ldots, q\}\big\}\,.$$
\begin{remark}
Notice that $\tilde{X}_k$ is nothing but $X_{qk}$ (see \eqref{E.Xk}).
\end{remark}
We consider the orthogonal projections
$$
\tilde{\Pi}: \tilde{X}_k \times X_k \to \tilde{X}_k \times \cK_{k,t} \,, \qquad \tilde{\Pi}^{\perp}: \tilde{X}_k \times X_k \to \tilde{X}_k \times \cK_{k,t}^{\perp}\,,
$$
where $X_k$, $\cK_{k,t}$ and $\cK_{k,t}^{\perp}$ are given in \eqref{E.Xk} and \eqref{E.Kk}, and we rewrite \eqref{E.problem+} as the equivalent system
\begin{equation} \label{E.systemLyapunov-Schmidt3}
\left\{
\begin{aligned}
\ & \tilde{\Pi} \big[\bL^*(\phi,\psi) - \tilde{\bE}^* - \tilde{\bN}^*(\phi,\psi)\big] = 0 \,, \\
& \tilde{\Pi}^{\perp} \big[\bL^*(\phi,\psi) - \tilde{\bE}^* - \tilde{\bN}^*(\phi,\psi)\big] = 0 \,, 
\end{aligned}
\right.
\qquad (\phi,\psi) \in \tilde{X}_k \times X_k\,,
\end{equation}
where the error terms $\tilde{\bE}^*=(\tilde{\cE}_1^*, \tilde{\cE}_2^*)$ are given by
\begin{equation} \label{E.error3}
\tilde{\cE}_1^* := \cE_1^*+(-\Delta)^{-1}\Big(\beta U^2\sum_{r=2}^qV_r^3\Big), \quad \tilde{\cE}_2^* := \cE_2^*+(-\Delta)^{-1}\Big(\alpha V^2\sum_{r=2}^qV_r^3\Big)\,,
\end{equation}
and the coupling terms $\tilde{\bN}^*=(\tilde{\cN}^*_1,\tilde{\cN}^*_2)$ by
\begin{equation}\label{E.nonlinearCoupling3}
\begin{aligned}
& \tilde{\cN}^*_1(\phi,\psi) := \cN^*_1(\phi,\psi)+(-\Delta)^{-1}\Big(\beta(U+\phi)^2_+\sum_{r=2}^q(V_r+\psi_r)_+^3-\beta U^2\sum_{r=2}^qV_r^3\Big)\,, \\
& \tilde{\cN}^*_2(\phi,\psi) := \cN^*_2(\phi,\psi)+(-\Delta)^{-1}\Big(\alpha (V+\psi)_+^2\sum_{r=2}^q(V_r+\psi_r)_+^3-\alpha V^2\sum_{r=2}^q V_r^3\Big)\,.
\end{aligned}
\end{equation}
Here, we have set
$$V_r(x):=V({\mathscr R}_{r,k} x)\,, \qquad \psi_r(x):=\psi({\mathscr R}_{r,k} x)\,,$$
and $\bL^*, \,\cE_1^*,\,\cE_2^*,\,\cN^*_1,\,\cN^*_2$ are defined in \eqref{E.linear}--\eqref{E.nonlinearCoupling}. Furthermore, let us denote
$$U_{r,t,j}(x): = \frac{3^\frac14 \sqrt{t \de_\be}}{(t^2\de_\be^2 + |x-\xi_{r,t,j}|^2)^\frac12}\,, \quad \xi_{r,t,j} := \sqrt{1-t^2 \delta_\be^2}\, \Big( \e^{\left(\frac{2\pi(j-1)}{k}+\frac{(r-1)2\pi}{qk}\right)i}, 0 \Big)\,,$$
so that $V_r=\sum\limits_{j=1}^kU_{r,t,j}\,.$

Given the similarities, the proof of Theorem \ref{T.mainResultmgeq3} follows the same strategy as the case of $m=2$.  We will focus on the differences. Through the rest of this section $\al \in \RR$ is a fixed parameter, and the appearing constants may depend on $\al$.

\subsection{The error of approximation} 
We first see that the new terms appearing in the error are not larger than the ones we had in Section \ref{S.m=2} (see Proposiiton \ref{P.error}).

\begin{proposition} \label{P.error3}
For all fixed integer $k \geq 2$ and all $0 < a < b$, there exists a constant $C > 0$ such that
$$
\|\tilde{\cE}_1^*\| + \|\tilde{\cE}_2^*\| \leq C \de_\be (1+|\beta|)\,,
$$
for all $\beta < -\sqrt{2}$ and $t \in [a,b]$. 
\end{proposition}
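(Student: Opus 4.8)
The plan is to deduce this directly from Proposition \ref{P.error} by controlling the two additional terms in \eqref{E.error3}. By \eqref{E.continuityDeltaminus1} and the triangle inequality,
$$
\|\tilde{\cE}_1^*\| + \|\tilde{\cE}_2^*\| \leq \|\cE_1^*\| + \|\cE_2^*\| + C|\beta|\,\Big\|U^2\sum_{r=2}^q V_r^3\Big\|_{\frac65} + C|\alpha|\,\Big\|V^2\sum_{r=2}^q V_r^3\Big\|_{\frac65}\,,
$$
so, by Proposition \ref{P.error}, it suffices to show that the two new $L^{6/5}$-norms are $\lesssim \de_\be$. The term with $U$ is immediate: since $U$ is radial and ${\mathscr R}_{r,k}$ is an orthogonal (hence volume-preserving) transformation of $\RR^3$, the change of variables $y = {\mathscr R}_{r,k}x$ gives $\|U^2 V_r^3\|_{\frac65} = \|U^2 V^3\|_{\frac65}$ for each $r$; and by \eqref{E.conclussionE1} in the proof of Proposition \ref{P.error} one has $|\beta|\,\|U^2 V^3\|_{\frac65} = \|\cE_1\|_{\frac65} \lesssim \de_\be|\beta|$, whence $\|U^2 V_r^3\|_{\frac65}\lesssim\de_\be$. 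Summing over the finitely many $r\in\{2,\dots,q\}$ (recall $q=m-1$ is fixed) keeps the bound $\lesssim\de_\be$, so the third term is $\lesssim |\beta|\de_\be$.

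The new point is the term with $V^2$. Expanding $V^2 V_r^3 = \big(\sum_i U_{t,i}\big)^2\big(\sum_j U_{r,t,j}\big)^3$ produces, $k$ and $q$ being fixed, a finite sum of products of five standard bubbles, the first two factors centered at vertices $\xi_{t,i}$ of one regular polygon and the last three at vertices $\xi_{r,t,j}$ of the rotated polygon. The key observation is that for $r\in\{2,\dots,q\}$ one has $\dist(\xi_{t,i},\xi_{r,t,j})\geq d_0>0$ for all $i,j$, with $d_0=d_0(k,q)$ independent of $\beta$: the angular positions differ by $\tfrac{2\pi(i-j)}{k}+\tfrac{(r-1)2\pi}{qk}$, which is never $\equiv 0\pmod{2\pi}$ since $\tfrac{r-1}{q}\notin\ZZ$ for $1\leq r-1\leq q-1$. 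Thus each product in the expansion is a product of bubbles all at the common small scale $t\de_\be$, with the factors originating from $V$ concentrated at points lying at fixed distance from those originating from $V_r$. A direct computation using the pointwise bounds $U_{\tau,\zeta}(x)\leq 3^{\frac14}\tau^{\frac12}|x-\zeta|^{-1}$ and $U_{\tau,\zeta}(x)\leq 3^{\frac14}\tau^{-\frac12}$ — splitting $\RR^3$ into balls of fixed radius $\rho_0<d_0/2$ around each of the $2k$ centers and the exterior region — then yields $\|U_{t,i}^2 U_{r,t,j}^3\|_{\frac65}\lesssim\de_\be^2$, the dominant contribution coming from the ball around $\xi_{r,t,j}$, where $U_{t,i}\lesssim\sqrt{\de_\be}$ while $\int_{\RR^3}U_{r,t,j}^{18/5}\lesssim\de_\be^{6/5}$; the mixed products (three or more distinct centers) are of the same or smaller order. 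Hence $\|V^2\sum_{r=2}^q V_r^3\|_{\frac65}\lesssim\de_\be^2\leq\de_\be$, so the fourth term is $\lesssim|\alpha|\de_\be$, and combining the estimates gives $\|\tilde{\cE}_1^*\|+\|\tilde{\cE}_2^*\|\leq C\de_\be(1+|\beta|)$.

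The only genuinely new ingredient — and the step requiring care — is the last estimate: $\|V^2 V_r^3\|_{\frac65}$ does not fit the template of Lemma \ref{L.PremoselliVetois}, which governs the interaction of $U$ with a \emph{single} polygon, because here one multiplies bubbles sitting on two \emph{different} rotated polygons. What makes it work — and in fact makes these nonlocal contributions genuinely lower order, as announced in the Introduction — is precisely the fixed separation of the two polygons when $r\geq 2$: all bubbles concentrate at the same rate $\de_\be$ but remain at mutually bounded-below distances, so their overlap is $O(\de_\be)$ in every relevant norm. Everything else is a verbatim repetition of the bookkeeping of Proposition \ref{P.error}.
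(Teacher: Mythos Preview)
Your proof is correct, and for the first additional term $\beta U^2\sum_r V_r^3$ it proceeds exactly as the paper does: exploit the radiality of $U$ and the orthogonality of ${\mathscr R}_{r,k}$ to reduce to $\|U^2V^3\|_{6/5}$, already controlled in Proposition~\ref{P.error}.

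For the second additional term $\alpha V^2\sum_r V_r^3$ your route differs from the paper's. You expand the product, observe that for $r\in\{2,\dots,q\}$ every center $\xi_{t,i}$ stays at a fixed positive distance from every $\xi_{r,t,j}$, and then estimate each five-bubble product by hand via the pointwise decay of the bubbles, obtaining $\|V^2V_r^3\|_{6/5}\lesssim\de_\be^2$. The paper instead notes that the union of the centers $\{\xi_{t,i}\}\cup\{\xi_{r,t,j}:r\ge2\}$ sits inside the vertex set of a regular $qk$-gon, so that on $\Om_1$ one can bound $V^2(\sum_r V_r)^3$ by expressions of the form $U_{t,1}^\nu\big(\sum_{\ell=2}^{qk}\tilde U_{t,\ell}\big)^\ga$ and apply Lemma~\ref{L.PremoselliVetois} directly (with $qk$ in place of $k$), which yields the slightly sharper $O(\de_\be^{5/2})$. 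Your approach is more elementary and makes the mechanism---fixed separation of the two polygons---completely explicit, at the cost of a marginally weaker (but still amply sufficient) bound; the paper's approach is slicker and stays within the machinery already set up, so your remark that the term ``does not fit the template of Lemma~\ref{L.PremoselliVetois}'' is not quite accurate: the paper makes it fit by enlarging the polygon.
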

\begin{proof}
Since $U$ is invariant under rotations, we have that
$$
\bigg\| \beta U^2\sum_{r=2}^q V_r^3 \bigg\|_{\frac65} \leq |\be|\sum_{r=2}^q \|U^2 V_r^3\|_{\frac65} = q |\be| \|U^2 V^3\|_{\frac65}\,,
$$
and hence, by Proposition \ref{P.error}, we get the estimate for $\tilde{\cE}_1^*$. On the other hand, 
\begin{equation*}\begin{split}
& \int_{\RR^3}\Big(\alpha V^2\sum_{r=2}^qV_r^3\Big)^\frac65\,dx = k\int_{\Omega_1}\Big(\alpha V^2\sum_{r=2}^qV_r^3\Big)^\frac65\,dx\leq k|\alpha|^\frac65\int_{\Omega_1}\Big(V^2\Big(\sum_{r=2}^qV_r\Big)^3\Big)^\frac65\,dx\\
& \quad \leq  k|2 \alpha|^\frac65\bigg[\int_{\Omega_1}\Big(U_{t,1}^2\Big(\sum_{r=2}^qV_r\Big)^3\Big)^\frac65\,dx+\int_{\Omega_1}\Big(\Big(\sum_{j=2}^kU_{t,j}\Big)^2\Big(\sum_{r=2}^qV_r\Big)^3\Big)^\frac65\,dx\bigg]\\
& \quad \leq k|2\alpha|^\frac65\bigg[\int_{\Omega_1}\Big(U_{t,1}^2\Big(\sum_{\ell=2}^{qk}\tilde{U}_{t,\ell}\Big)^3\Big)^\frac65\,dx+\int_{\Omega_1}\Big(\sum_{\ell=2}^{qk}\tilde{U}_{t,\ell}\Big)^6\,dx\bigg],
\end{split}\end{equation*}
with
$$
\tilde{U}_{t,\ell}(x): = \frac{3^\frac14 \sqrt{t \de_\be}}{(t^2\de_\be^2 + |x-\tilde{\xi}_{t,\ell}|^2)^\frac12} \quad \textup{and} \quad \tilde{\xi}_{t,\ell} := \sqrt{1-t^2 \delta_\be^2}\, \Big( \e^{\left(\frac{2\pi(\ell-1)}{qk}\right)i}, 0 \Big)\,, \quad \textup{for } \ell \in \{2, \ldots, qk\}\,.
$$
Keeping in mind Remark \ref{R.3}, (iii), $\sum_{\ell=2}^{qk}\tilde{U}_{t,\ell}$ can be seen as a generalization of the usual construction but with $qk$ peaks. Since both $q$ and $k$ are fixed we can apply Lemma \ref{L.PremoselliVetois} to both terms, with $\nu=12/5$, $\ga=18/5$ and $\nu=0$, $\ga=6$ respectively,  to conclude that
$$
\bigg\|\alpha V^2\sum_{r=2}^qV_r^3\bigg\|_{\frac65} = O\big(\delta_\beta^\frac52\big)\,.
$$
The estimate for $\tilde{\cE}_2^*$ then follows by Proposition \ref{P.error} and so does the result. 
\end{proof}

\subsection{The linear theory}
Denote
$$
\tilde{\bL}^{\perp}_* := \tilde{\Pi}^{\perp} \bL^*\,, \quad \tilde{\bE}^{\perp}_* := \tilde{\Pi}^{\perp} \tilde{\bE}^*\,, \quad \textup{and} \quad \tilde{\bN}^{\perp}_* := \tilde{\Pi}^{\perp} \tilde{\bN}^*\,.
$$
Then, we have the following:

\begin{proposition} \label{P.linear3}
For all fixed integer $k \geq 2$ and all $0 < a < b$, there exist constants $\be_\star < -\sqrt{2}$ and $C > 0$ such that, for all $\be \in (-\infty, \be_\star]$ and all $t \in [a,b]$,
\begin{equation}
\|(\phi,\psi)\| \leq C \|\tilde{\bL}^{\perp}_*(\phi,\psi)\| \,, \quad \textup{\textit{for all} } (\phi,\psi) \in \tilde{X}_k \times \cK_{k,t}^{\perp}\,.
\end{equation}
In particular, the inverse operator $(\tilde{\bL}^{\perp}_*)^{-1} : \tilde{X}_k \times \cK_{k,t}^{\perp} \to \tilde{X}_k \times \cK_{k,t}^{\perp}$ exists and is continuous. 
\end{proposition}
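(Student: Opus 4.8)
The plan is to observe that $\tilde{\bL}^{\perp}_*$ is, up to a restriction of its domain, the very same operator $\bL^{\perp}_*$ analysed in Proposition~\ref{P.linear}, and to re-run the blow-up argument given there. First I would note that, by construction, all the nonlocal contributions to \eqref{E.problem+} have been absorbed into $\tilde{\bE}^*$ and $\tilde{\bN}^*$ (see \eqref{E.error3}--\eqref{E.nonlinearCoupling3}), so that $\bL^*=(\cL_1^*,\cL_2^*)$ is literally the operator from Section~\ref{S.m=2}. Next, $\tilde{X}_k=X_{qk}$ is a closed subspace of $X_k$, because invariance under ${\mathscr R}_{\frac{2\pi}{qk}}$ implies invariance under ${\mathscr R}_{\frac{2\pi}{k}}=({\mathscr R}_{\frac{2\pi}{qk}})^{q}$; moreover $(-\Delta)^{-1}$ respects the symmetries defining $\tilde{X}_k$ (cf.\ the remarks following \eqref{E.system2eq+inv}) and $U^4\phi\in\tilde X_k$ whenever $\phi\in\tilde X_k$, since $U$ is radial, so $\cL_1^*$ maps $\tilde{X}_k$ into itself and no projection is needed on the first component, while the second component is projected onto $\cK_{k,t}^{\perp}\subset X_k$ exactly as before. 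Consequently, assuming the estimate fails would produce sequences $\be_n\to-\infty$, $t_n\in[a,b]$ and $(\phi_n,\psi_n)\in\tilde{X}_k\times\cK_{k,t_n}^{\perp}$ with $\|\phi_n\|+\|\psi_n\|=1$ solving exactly system \eqref{E.sequencesLinearized} for some scalar sequence $(c_n)_n$, with $\|f_n\|+\|g_n\|=o(1)$.

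From this point the four steps of the proof of Proposition~\ref{P.linear} carry over with essentially no change. In Step~1, up to a subsequence $\phi_n\rightharpoonup\phi$ in $\dot{H}^1$ with $-\Delta\phi-5U^4\phi=0$, hence $\phi\in\Span\{Z^{(0)},\dots,Z^{(3)}\}$ by \cite{BiEg1991}; since $\phi\in\tilde{X}_k\subset X_k$, the symmetry argument of Step~1 (which uses only \eqref{E.invarianceEven}, \eqref{E.invarianceRotation} and \eqref{E.invarianceKelvin}) gives $\langle\phi,Z^{(\ell)}\rangle=0$ for every $\ell$, so $\phi\equiv0$; the extra invariances carried by $\tilde{X}_k$ are not even needed. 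Steps~2 and 3 involve only $\psi_n\in\cK_{k,t_n}^{\perp}\subset X_k$ and the bubble configuration $\xi_{t_n,j}=\sqrt{1-t_n^2\de_{\be_n}^2}\,(\e^{\frac{2\pi(j-1)}{k}i},0)$, which is identical to that of Section~\ref{S.m=2}; so one still obtains $\|Z_n\|^2=\frac{15\sqrt{3}\pi^2}{64}k+o(1)$, $c_n\to0$, and weak convergence to $0$ of the rescaled functions $\widetilde{\psi}_n(y)=\sqrt{t_n\de_{\be_n}}\,\psi_n(t_n\de_{\be_n}y+\xi_{t_n,1})$. Finally, in Step~4, testing the two equations and invoking Steps~1--3 together with Lemma~\ref{L.PremoselliVetois} (applied with $\nu=0$, $\ga=6$) yields $1=\|\phi_n\|+\|\psi_n\|=5\int_{\RR^3}V_n^4\psi_n^2\,dx+o(1)=o(1)$, the desired contradiction. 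The existence and continuity of $(\tilde{\bL}^{\perp}_*)^{-1}$ then follow from the a priori bound as in the case $m=2$: $\tilde{\bL}^{\perp}_*$ is the identity minus a compact operator, hence Fredholm of index zero, and injectivity forces surjectivity.

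I do not expect a genuine obstacle here, since the statement is a transcription of Proposition~\ref{P.linear}. The only points that require care are: (i) checking that $\tilde{X}_k\subset X_k$ and that $\cL_1^*$ preserves $\tilde{X}_k$, so that the Lyapunov--Schmidt structure is unchanged and the Lagrange multiplier remains a single scalar $c_n$ attached to $Z_{k,t}\in X_k$; and (ii) confirming that passing to the smaller symmetry class does not spoil the local compactness used in Step~1, which it does not, $\tilde{X}_k$ being a closed subspace of $\dot{H}^1$. Keeping straight which symmetry class each component lives in ($\phi\in\tilde{X}_k$, $\psi\in X_k$) is essentially all the bookkeeping the proof requires.
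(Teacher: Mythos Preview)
Your proposal is correct and takes essentially the same approach as the paper: the paper's own proof simply observes that $\bL^*$ maps $\tilde{X}_k\times X_k$ into itself (since $U$ is radial) and that $\tilde{X}_k\subset X_k$, whence the argument of Proposition~\ref{P.linear} carries over verbatim. Your write-up just makes explicit the four steps and the Fredholm wrap-up that the paper leaves implicit.
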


\begin{proof}
The proof follows exactly as in Proposition \ref{P.linear} just by noticing that the operator $\bL^*$ preserves the extra symmetry appearing in $\tilde{X}_k$. That is, using the fact that $U$ is radial, one gets that $\bL^*$ maps $\tilde{X}_k\times X_k$ into $\tilde{X}_k\times X_k$. 
Since $\tilde{X}_k\subset X_k$, the rest of the proof can be analogously reproduced.
\end{proof}

\subsection{Nonlinear theory} Having at hand Propositions \ref{P.error3} and Proposition \ref{P.linear3} we can now give the required existence result for 
$$\tilde{\Pi}^{\perp} \big[\bL^*(\phi,\psi) - \tilde{\bE}^* - \tilde{\bN}^*(\phi,\psi)\big] = 0 \,, 
\qquad (\phi,\psi) \in \tilde{X}_k \times X_k\,.$$
Indeed, we have the following:

\begin{proposition} \label{P.nonlinear3}
For all fixed integer $k \geq 2$ and all $0 < a < b$, there exist constants $\be_{\star} < -\sqrt{2}$ and $C > 0$ such that, for all $\be \in (-\infty, \be_{\star}]$ and all $t \in [a,b]$, the system 
\begin{equation} \label{E.nonlinearProjected3}
\tilde{\bL}^{\perp}_*(\phi,\psi) - \tilde{\bE}^{\perp}_* - \tilde{\bN}^{\perp}_*(\phi,\psi) = 0\,, \quad (\phi,\psi) \in \tilde{X}_k \times \cK_{k,t}^{\perp}\,,
\end{equation}
has a unique solution $(\phi[t],\psi[t]) \in \tilde{X}_k \times \cK_{k,t}^{\perp}$ such that
$$
\|\phi[t]\| + \|\psi[t]\| \leq C \de_\be (1+|\be|)\,.
$$
Moreover, the map $t \mapsto (\phi[t], \psi[t])$ is continuously differentiable.
\end{proposition}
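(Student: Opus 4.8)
The plan is to follow line by line the proof of Proposition \ref{P.nonlinear}, recasting \eqref{E.nonlinearProjected3} as a fixed point problem and invoking the contraction mapping theorem. By Proposition \ref{P.linear3} the operator $\tilde{\bL}^{\perp}_*$ is boundedly invertible on $\tilde{X}_k\times\cK_{k,t}^{\perp}$, uniformly in $\be\in(-\infty,\be_\star]$ and $t\in[a,b]$, so \eqref{E.nonlinearProjected3} is equivalent to finding a fixed point in $\tilde{X}_k\times\cK_{k,t}^{\perp}$ of $\mathcal{T}(\phi,\psi):=(\tilde{\bL}^{\perp}_*)^{-1}\big[\tilde{\bE}^{\perp}_*+\tilde{\bN}^{\perp}_*(\phi,\psi)\big]$. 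The only substantial step is to prove the analogue of Lemma \ref{L.linearCoupling} for $\tilde{\bN}^{\perp}_*$, namely that for every $D>0$ there is $C>0$ with
$$
\|\tilde{\bN}^{\perp}_*(\phi_0,\psi_0)\|\leq \frac{C}{|\log\de_\be|}\big(\|\phi_0\|+\|\psi_0\|\big),\qquad \|\tilde{\bN}^{\perp}_*(\phi_1,\psi_1)-\tilde{\bN}^{\perp}_*(\phi_2,\psi_2)\|\leq \frac{C}{|\log\de_\be|}\big(\|\phi_1-\phi_2\|+\|\psi_1-\psi_2\|\big),
$$
for all $\be<-\sqrt2$, $t\in[a,b]$ and $\|\phi_i\|+\|\psi_i\|\leq D\de_\be(1+|\be|)$. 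Granting this, Proposition \ref{P.error3} supplies $\|\tilde{\bE}^{\perp}_*\|\leq C\de_\be(1+|\be|)$, and a standard computation shows that for $|\be|$ large enough (so that $C/|\log\de_\be|<1/2$) the map $\mathcal{T}$ sends the ball $\{\|\phi\|+\|\psi\|\leq 2C\de_\be(1+|\be|)\}$ into itself and is a contraction there; the contraction mapping theorem then produces the unique $(\phi[t],\psi[t])$ with the stated bound. The $C^1$ dependence on $t$ follows exactly as in Proposition \ref{P.nonlinear}, from the uniform contraction principle (equivalently, the implicit function theorem applied to $(t,\phi,\psi)\mapsto\tilde{\bL}^{\perp}_*(\phi,\psi)-\tilde{\bE}^{\perp}_*-\tilde{\bN}^{\perp}_*(\phi,\psi)$, whose differential in $(\phi,\psi)$ at the solution is a small perturbation of $\tilde{\bL}^{\perp}_*$).

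To establish the displayed estimates I would split $\tilde{\cN}^*_i=\cN^*_i+\cR_i$, where $\cR_i$ collects the genuinely nonlocal terms in \eqref{E.nonlinearCoupling3}. The contribution of the local parts $\cN^*_i$ is exactly what Lemma \ref{L.linearCoupling} already bounds, so only $\cR_1$ and $\cR_2$ require attention. For $\cR_1=\beta(-\Delta)^{-1}\big((U+\phi)_+^2\sum_{r=2}^q(V_r+\psi_r)_+^3-U^2\sum_{r=2}^q V_r^3\big)$ I would expand the difference with Taylor and the mean value theorem precisely as in the proof of Lemma \ref{L.linearCoupling}, and then, for each fixed $r\in\{2,\dots,q\}$, perform the rotation change of variables $y=\mathscr{R}_{r,k}x$: since $U$ is radial it is unchanged, while $V_r\mapsto V$ and $\psi_r\mapsto\psi$, so every resulting integral coincides, up to the fixed multiplicity $q-1$, with one already estimated in Lemma \ref{L.linearCoupling}; hence $\|\cR_1\|\leq\frac{C}{|\log\de_\be|}(\|\phi\|+\|\psi\|)$.

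The term $\cR_2=\alpha(-\Delta)^{-1}\big((V+\psi)_+^2\sum_{r=2}^q(V_r+\psi_r)_+^3-V^2\sum_{r=2}^q V_r^3\big)$ is of a different, but strictly easier, nature, because $\alpha$ is a \emph{fixed} constant: it suffices to show these terms are $o(1/|\log\de_\be|)$, and in fact they carry a positive power of $\de_\be$ (up to logarithms). After the same expansion, the terms linear in the perturbation are of the form $\alpha(-\Delta)^{-1}(V^2V_r^2\psi_r)$ and $\alpha(-\Delta)^{-1}(VV_r^3\psi)$. Since the bubbles composing $V$ and those composing the $V_r$ together form the $qk$-bubble configuration $\tilde{U}_{t,\ell}$ with no concentric pairs and pairwise separation of order $(qk)^{-1}$ (cf. Remark \ref{R.3}(ii)--(iii)), one has $V^2V_r^2\leq\big(\sum_{\ell=1}^{qk}\tilde{U}_{t,\ell}\big)^4$, and Lemma \ref{L.PremoselliVetois}, applied to this $qk$-bubble configuration (legitimate as $qk$ is fixed), bounds the cross interaction $\|V^3V_r^3\|_1$ by $O(\de_\be^3|\log\de_\be|)$; Hölder's inequality then gives $\|V^2V_r^2\psi_r\|_{\frac65}\leq\|V^2V_r^2\|_{\frac32}\|\psi_r\|_6=O\big(\de_\be^2|\log\de_\be|^{\frac23}\big)\|\psi\|$, and the $VV_r^3\psi$ term is handled the same way, while the terms quadratic or cubic in the perturbation are smaller still on the ball $\|\phi_i\|+\|\psi_i\|\leq D\de_\be(1+|\be|)$, by the same mechanism that makes the nonlinear terms negligible in Lemma \ref{L.linearCoupling}. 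Combining the bounds for $\cN^*_i$, $\cR_1$ and $\cR_2$ yields the two displayed estimates. I expect the main obstacle to be not conceptual but organizational: carefully unwinding the expansions of $\cR_1$ and $\cR_2$ and checking that each interaction integral that appears is dominated by Lemma \ref{L.PremoselliVetois} with a favourable exponent of $\de_\be$; once that bookkeeping is done, the fixed point scheme and the $C^1$ statement are entirely routine.
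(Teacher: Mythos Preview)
Your contraction estimates for the nonlocal remainders $\cR_1,\cR_2$ are correct and in fact more detailed than what the paper writes; the paper simply says that ``the fact that the nonlinear term is contractive follows in the same spirit as Lemma~\ref{L.linearCoupling}'' and does not spell out the rotation change of variables for $\cR_1$ or the $qk$-bubble bound for $\cR_2$ that you describe.

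However, you omit precisely the point the paper singles out as the one thing requiring verification: that $\tilde{\bE}^*+\tilde{\bN}^*(\phi,\psi)$ actually lands in $\tilde X_k\times X_k$ whenever $(\phi,\psi)\in\tilde X_k\times X_k$, so that the projection $\tilde\Pi^\perp$ is well defined and $\mathcal T$ maps $\tilde X_k\times\cK_{k,t}^\perp$ into itself. This is not automatic. The delicate symmetry is the evenness in $x_2$: for a single $r$, neither $V_r$ nor $\psi_r$ is even in $x_2$, so the individual summands in the nonlocal terms do not lie in $X_k$. The paper observes that the reflection $(x_1,x_2,x_3)\mapsto(x_1,-x_2,x_3)$ permutes the rotated copies via $V_r(x_1,x_2,x_3)=V_{q-r+2}(x_1,-x_2,x_3)$ (and likewise for $\psi_r$), so that the \emph{sums} $\sum_{r=2}^q(V_r+\psi_r)_+^\gamma$ are even in $x_2$. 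One must also check that the first component acquires the extra $\mathscr R_{r,k}$-invariance defining $\tilde X_k$; this uses $V_r(\mathscr R_{r,k}x)=V_{r+1}(x)$, so that the rotation simply cycles the sum $\sum_{r=1}^q(V_r+\psi_r)_+^3$. Without these checks your fixed point map is not even defined on the stated space, so this is a genuine gap in the write-up, though once noticed it is routine to fill.
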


\begin{proof}
As stated in Section \ref{S.m=2}, once we have Propositions \ref{P.error3} and Proposition \ref{P.linear3}, this result follows by 
a standard fixed point argument. The fact that the nonlinear term is contractive follows in the same spirit as Lemma \ref{L.linearCoupling}. Hence, it only remains to check that $\tilde{\bE}^{\perp}_* + \tilde{\bN}^{\perp}_*$ maps $\tilde{X}_k \times \cK_{k,t}^{\perp}$ into itself. 

By construction, for every $r \in \{1,\ldots q\}$, $V_r$ satisfies the symmetries \eqref{E.invarianceRotation}, \eqref{E.invarianceKelvin},  and 
$$
V_r(x_1,x_2,x_3)=V_r(x_1,x_2,-x_3)\,.
$$
Also, using that $V(x_1,x_2,x_3) = V(x_1,-x_2,x_3)$,  it can be seen that
$$
V_r(x_1,x_2,x_3)=V_{q-r+2}(x_1,-x_2,x_3) \quad \textup{for } r\in \{1,\ldots q\}\,,
$$
and hence in particular
$$
\sum_{r=2}^q V_r(x_1,x_2,x_3)^\gamma=\sum_{r=2}^qV_r(x_1,-x_2,x_3)^\gamma,\quad \gamma\in\NN\,.
$$
Likewise,  if $\psi\in X_k$ then $\psi_r(x_1,x_2,x_3)=\psi_{q-r+2}(x_1,-x_2,x_3)$ and hence
$$
\sum_{r=2}^q \big([V_r+\psi_r](x_1,x_2,x_3)\big)^\gamma=\sum_{r=2}^q \big([V_r+\psi_r](x_1,-x_2,x_3)\big)^\gamma,\quad \gamma\in\NN\,.
$$
Having at hand this information, it is straightforward to see that $\tilde{\bE}^*+\tilde{\bN}^*(\phi,\psi)$ satisfies \eqref{E.invarianceEven}--\eqref{E.invarianceKelvin} whenever $(\phi,\psi)\in \tilde{X}_k\times X_k$. 

On the other hand, noticing that
$$
V_r({\mathscr R}_{r,k}x)=V_{r+1}(x) \quad \textup{for } r \in \{2, \ldots,q\}\,,
$$
we get that, for every pair $(\phi,\psi)\in \tilde{X}_k\times X_k$,
$$
\Big[\tilde{\cE}^*_1+\tilde{\cN}^*_1(\phi,\psi)\Big]({\mathscr R}_{r,k}x)= \Big[\tilde{\cE}^*_1+\tilde{\cN}^*_1(\phi,\psi)\Big](x) \quad  \textup{for } r \in \{2, \ldots,q\}\,,
$$
and the result easily follows. 
\end{proof}

\subsection{Finite dimensional reduction} Given a fixed integer $k \geq 2$, $0<a<b$, $t \in [a,b]$, $\be_\star < - \sqrt{2}$ as in Proposition \ref{P.nonlinear3} and $\beta \in (-\infty, \be_\star]$, we let $(\phi,\psi) = (\phi[t],\psi[t])$ be the unique solution to  \eqref{E.nonlinearProjected3} provided by Proposition \ref{P.nonlinear3}.  The goal of this subsection is to find $t$ in such a way that $(\phi,\psi)$ solves the complete problem \eqref{E.systemLyapunov-Schmidt3}. In Section \ref{S.m=2} we faced this by a variational approach, which is not available here. We will use the so-called direct method: we rewrite the first equation in \eqref{E.systemLyapunov-Schmidt3}  as
\begin{equation}\label{E.fine1}  
\mathcal L_2^*(\psi) - \tilde{\mathcal E}_2^* - \tilde{\mathcal N}_2^*(\phi,\psi)  = c_0(t) Z_{k,t}\,,
\end{equation}
where the function $Z_{k,t}$ is given in \eqref{E.Z}, and hence we want to find $t$ so that $c_0(t)=0$ or, equivalently (see \eqref{E.ZnExpansion}), so that
\begin{equation*}\label{E.proj3}
\tilde{\cc}(t):=\int_{\RR^3}\nabla (\mathcal L_2^*(\psi) - \tilde{\mathcal E}_2^* - \tilde{\mathcal N}_2^*(\phi,\psi)) \nabla Z_{k,t}\,dx=0\,.
\end{equation*}

\begin{proposition}\label{P.reduction3}
For all fixed integer $k \geq 2$ and all $0 < a < b$, there exist constants $\be_\star < -\sqrt{2}$ and $C > 0$ such that, for all $\be \in (-\infty,\be_\star]$ and all $t \in [a,b]$, it holds
$$
\tilde{\cc}(t)=-\tilde{\mathtt c}_1 t\delta_\beta-\tilde{\mathtt c}_2\beta (t\delta_\beta)^\frac32|\log \delta_\beta|+\theta_{3,\beta}(t)\,,
$$
where $ \theta_{3,\beta}:[a,b]\to\RR$ satisfies
$$
|\theta_{3,\beta}(t)|\leq C \de_\be^{2} (1+|\be|)^2\, ,\qquad \tilde{\mathtt{c}}_1:= \sqrt{6}\, \pi k \sum_{j=2}^k\Big[1-\cos\Big(\tfrac{2\pi(j-1)}{k}\Big)\Big]^{-\frac12},\qquad\tilde{\mathtt{c}}_2:=\frac32\sqrt{6}\, \pi k\,.
$$

\end{proposition}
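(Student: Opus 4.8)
The plan is to compute $\tilde{\cc}(t)$ directly from its definition — we cannot use the variational shortcut of Section \ref{S.m=2} since \eqref{2222} is not a gradient system — isolating the same two leading contributions that appeared in the energy expansion of Proposition \ref{P.expansion2}, now paired against the dilation mode $Z_{k,t}$. \textbf{Step 1 (reduction to explicit integrals).} Since $(\phi,\psi)$ is the solution from Proposition \ref{P.nonlinear3} we have $\psi\in\cK_{k,t}^{\perp}$, hence $\langle\psi,Z_{k,t}\rangle=0$. Expanding $\tilde{\cc}(t)=\langle\cL_2^*(\psi)-\tilde{\cE}_2^*-\tilde{\cN}_2^*(\phi,\psi),Z_{k,t}\rangle$ through the dual formulation, i.e. using $\langle(-\Delta)^{-1}g,Z_{k,t}\rangle=\int_{\RR^3}g\,Z_{k,t}\,dx$, I would first rewrite
$$
\tilde{\cc}(t)=-5\int_{\RR^3}V^4\psi\,Z_{k,t}\,dx-\int_{\RR^3}\tilde{\cE}_2\,Z_{k,t}\,dx-\int_{\RR^3}\tilde{\cN}_2(\phi,\psi)\,Z_{k,t}\,dx,
$$
where $\tilde{\cE}_2=\bigl(V^5-\sum_{j=1}^{k}U_{t,j}^5\bigr)+\beta U^3V^2+\alpha V^2\sum_{r=2}^qV_r^3$, and then estimate the three integrals.

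\textbf{Step 2 (the two main terms).} Both come from $-\int\tilde{\cE}_2\,Z_{k,t}$. For $-\int\bigl(V^5-\sum_jU_{t,j}^5\bigr)Z_{k,t}$ I would expand $V^5=(\sum_jU_{t,j})^5$; the dominant part is $5\sum_{i\neq j}\int U_{t,j}^4U_{t,i}Z_{k,t}$, and in each such integral the leading piece is the one against the component $Z^{(0)}_{t\de_\be,\xi_{t,j}}$ of $Z_{k,t}$, where on the concentration scale $U_{t,i}$ is frozen at $U_{t,i}(\xi_{t,j})\sim 3^{1/4}\sqrt{t\de_\be}/|\xi_{t,i}-\xi_{t,j}|$; using $\int_{\RR^3}U^4Z^{(0)}=\tfrac1{10}\|U\|_5^5$, $|\xi_{t,i}-\xi_{t,j}|\to 2|\sin(\pi(i-j)/k)|$ and summing over vertices produces $-\tilde{\mathtt{c}}_1 t\de_\be+O(\de_\be^2)$. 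For $-\int\beta U^3V^2Z_{k,t}$ I would localise near each vertex $\xi_{t,l}$ and rescale: there $U^3\approx U(\xi_{t,l})^3\to(3^{1/4}/\sqrt2)^3$, $V^2\approx U_{t,l}^2$, and the logarithm is generated by the divergent integral $\int_{B_{r_0/(t\de_\be)}(0)}U^2Z^{(0)}\,dy=2\pi3^{3/4}|\log\de_\be|+O(1)$ (the integrand decays like $|y|^{-3}$); summing over the $k$ vertices, multiplying by $\beta$, and checking that the cross terms in $V^2$ and the contribution outside $\bigcup_lB_{r_0}(\xi_{t,l})$ are lower order — estimated exactly as in \eqref{E.conclussionEnergyExpansion} — gives $-\tilde{\mathtt{c}}_2\beta(t\de_\be)^{3/2}|\log\de_\be|$ at main order. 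The remaining piece $-\int\alpha V^2\sum_{r=2}^qV_r^3\,Z_{k,t}$ is a pure interaction among the $qk$ (fixed number, hence uniformly separated) bubbles and is $O(\de_\be^{3/2})$ by Lemma \ref{L.PremoselliVetois}, exactly as in Proposition \ref{P.error3}.

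\textbf{Step 3 (the error terms).} For $-5\int V^4\psi Z_{k,t}$ I would exploit $\langle\psi,Z_{k,t}\rangle=0$ once more: since $-\Delta Z_{k,t}=5\sum_lU_{t,l}^4Z^{(0)}_{t\de_\be,\xi_{t,l}}$, this reads $\int_{\RR^3}\psi\sum_lU_{t,l}^4Z^{(0)}_{t\de_\be,\xi_{t,l}}\,dx=0$, hence $-5\int V^4\psi Z_{k,t}=-5\int\psi\bigl(V^4Z_{k,t}-\sum_lU_{t,l}^4Z^{(0)}_{t\de_\be,\xi_{t,l}}\bigr)$, and the integrand is a sum of products of bubbles centred at two or more distinct vertices, hence $O(\de_\be)$ in $L^{\frac65}$; by H\"older and the bound $\|\psi\|\lesssim\de_\be(1+|\beta|)$ from Proposition \ref{P.nonlinear3} this term is $O(\de_\be^2(1+|\beta|))$. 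For $-\int\tilde{\cN}_2(\phi,\psi)Z_{k,t}$ I would use the pointwise bound $|Z_{k,t}|\le V$ and argue term by term as in Lemma \ref{L.linearCoupling}: the genuinely quadratic contributions in $(\phi,\psi)$ are $O(\|\phi\|^2+\|\psi\|^2)=O(\de_\be^2(1+|\beta|)^2)$, while the $\beta$-linear ones such as $\beta U^2V^2\phi$ and $\beta U^3V\psi$ are controlled by $|\beta|\,\|(\text{bubble factor})\|_{\frac65}\,\|\phi\|_6$ with the bubble factor of size $O(\de_\be)$ as in Proposition \ref{P.error}, hence again $O(|\beta|\de_\be\cdot\de_\be(1+|\beta|))=O(\de_\be^2(1+|\beta|)^2)$; the extra non-local terms weighted by $\alpha$ are handled as in Proposition \ref{P.error3}. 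Collecting all lower order contributions into $\theta_{3,\beta}(t)$ yields the stated expansion, and continuous differentiability of $t\mapsto\tilde{\cc}(t)$ follows from the corresponding property in Proposition \ref{P.nonlinear3}.

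\textbf{Expected main obstacle.} The delicate point is the precise evaluation of the coupling integral $-\int\beta U^3V^2Z_{k,t}$: one must localise carefully, recognise $\int U^2Z^{(0)}$ as the logarithmically divergent integral responsible for the $|\log\de_\be|$ factor, pin down the exact constant $\tilde{\mathtt{c}}_2=\tfrac32\sqrt6\,\pi k$, and — mimicking \eqref{E.conclussionEnergyExpansion} — show that all subleading pieces (the variation of $U^3$ over the concentration ball, the tail outside $B_{r_0}(\xi_{t,l})$, the cross terms in $V^2$, and the mutual cross terms between distinct $Z^{(0)}$'s) land inside $\theta_{3,\beta}$. This is precisely the place where the three-dimensional nature of the problem intervenes, through the slow rate $\de_\be^{1/2}|\log\de_\be|\sim -1/\beta$ that makes this term balance the bubble interaction of Step 2.
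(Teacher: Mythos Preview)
Your proposal is correct and follows essentially the same strategy as the paper's proof: reduce $\tilde{\cc}(t)$ to $-\int\tilde{\cE}_2 Z_{k,t}$ plus lower order terms via the orthogonality $\langle\psi,Z_{k,t}\rangle=0$ and the nonlinear bounds of Lemma~\ref{L.linearCoupling}/Proposition~\ref{P.nonlinear3}, then split the error integral into the three pieces $I_1,I_2,I_3$ and evaluate each by localisation and Lemma~\ref{L.PremoselliVetois}. Your handling of $\int\cL_2(\psi)Z_{k,t}$ --- first writing it as $-5\int V^4\psi Z_{k,t}$ and then subtracting $5\int\psi\sum_l U_{t,l}^4 Z^{(0)}_{t\de_\be,\xi_{t,l}}=0$ --- is exactly the mechanism the paper uses in one line, and your identification of the logarithm through $\int_{B_{r_0/(t\de_\be)}}U^2 Z^{(0)}$ matches the paper's computation via \cite[Equation (2.11)]{medinamusso2021} and \eqref{E.conclussionEEmain}; the only minor discrepancy is that you quote $I_3=O(\de_\be^{3/2})$ whereas the paper obtains the sharper $O(\de_\be^3|\log\de_\be|)$, but your bound already suffices for $\theta_{3,\beta}$.
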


\begin{proof}
First of all, it can be seen that 
\begin{equation}\label{E.projection3}
\int_{\RR^3}\nabla (\mathcal L_2^*(\psi) - \tilde{\mathcal E}_2^* - \tilde{\mathcal N}_2^*(\phi,\psi)) \nabla Z_{k,t}\,dx= -\int_{\RR^3}\nabla  \tilde{\mathcal E}_2^* \nabla Z_{k,t}\,dx + O \big(\de_\be^{2} (1+|\be|)^2 \big) \,.
\end{equation}
Indeed, using the fact that 
\begin{equation}\label{E.boundZ}
|Z^{(0)}_{t\delta_\beta,\xi_{t,j}}|\leq CU_{t\delta_\beta,\xi_{t,j}}\quad \mbox{ and }\quad |Z_{k,t}|\leq CV\quad \mbox{ for some constant }C>0\,,
\end{equation}
the problem satisfied for $Z_{k,t}$ and Proposition \ref{P.nonlinear3}, we can estimate
$$
\int_{\RR^3}\nabla \mathcal L_2^*(\psi) \nabla Z_{k,t}\,dx=\int_{\RR^3}\mathcal L_2(\psi) Z_{k,t}\,dx=5\int_{\RR^3}\Big(V^4-\sum_{j=1}^kU_{t,j}^4\Big)\psi Z_{k,t}\,dx=O \big( \de_\be^2 (1+|\be|) \big)\,.
$$
Also, proceeding as in the proof of Lemma \ref{L.linearCoupling} and using Proposition \ref{P.nonlinear3}, one can see that
\begin{equation*}
\begin{split}
& \int_{\RR^3}\nabla \tilde{\mathcal N}_2^*(\phi,\psi)\nabla Z_{k,t}\,dx=\int_{\RR^3}\tilde{\mathcal N}_2(\phi,\psi)Z_{k,t}\,dx\\
&\quad \lesssim \bigg|\beta\int_{\RR^3}(V^2U^2\phi+U^3V\psi )Z_{k,t}\,dx\bigg| + |\alpha| \sum_{r=2}^q \bigg| \int_{\RR^3}(V^2V_r^2\psi_r+V V_r^3\psi)Z_{k,t}\,dx\bigg|+O(\delta_\beta^2(1+|\beta|)^2)\,,
\end{split}
\end{equation*}
and that
$$
\bigg|\beta\int_{\RR^3}(V^2U^2\phi+U^3V\psi )Z_{k,t}\,dx\bigg| \leq |\beta|(\|\psi\|+\|\phi\|)\Big(\|V^3U^2\|_{\frac65}+\|V^2U^3\|_{\frac65}\Big)= O \big( \de_\be^{2} (1+|\be|)^2 \big)\,.
$$
Furthermore,
$$|\alpha| \sum_{r=2}^q \bigg| \int_{\RR^3}(V^2V_r^2\psi_r+V V_r^3\psi)Z_{k,t}\,dx\bigg| \leq |\alpha|\|\psi\|\Big(\|V^3V_r^2\|_{\frac65}+\|V^2V_r^3\|_{\frac65}\Big) = O  \big( \de_\be^3 (1+|\be|) \big)\,,$$
and thus \eqref{E.projection3} follows. Notice that the key point to get this estimate is the fact that 
\begin{equation}\label{E.separation}
\mbox{there exists }\eta_0>0\mbox{ such that }|\xi_{r,t,j}-\xi_{s,t,i}|>\eta_0 \mbox{ whenever }i\neq j\mbox{ or }r\neq s.
\end{equation}
Now, let us estimate the projection of the error term:
\begin{equation*}
\begin{split}
& \int_{\RR^3}\nabla \tilde{\mathcal E}_2^* \nabla Z_{k,t}\,dx=\int_{\RR^3}\tilde{\mathcal E}_2Z_{k,t}\,dx\\
& \quad =\underbrace{\int_{\RR^3}\Big(V^5-\sum_{j=1}^kU_{t,j}^5\Big)Z_{k,t}\,dx}_{I_1}+\underbrace{\beta\int_{\RR^3}U^3V^2Z_{k,t}\,dx}_{I_2}+\underbrace{\alpha\int_{\RR^3}V^2\Big(\sum_{r=2}^q V_r\Big)^3Z_{k,t}\,dx}_{I_3}\,.
\end{split}
\end{equation*}
We estimate each $I_i$, $i \in \{1,2,3\}$ separately. 

First, using the symmetries of the construction and \eqref{E.boundZ} once again, we get that
\begin{equation*}
\begin{split}
I_1&=k\int_{\Omega_1}\Big(V^5-\sum_{j=1}^kU_{t,j}^5\Big)Z_{k,t}\,dx\\
&=5k\int_{\Omega_1}U_{t,1}^4\Big(\sum_{j=2}^kU_{t,j}\Big)Z^{(0)}_{t\delta_\beta,\xi_{t,1}}\,dx+O\Big(\sum_{\gamma=0}^4\int_{\Omega_1}U_{t,1}^\gamma\Big(\sum_{j=2}^kU_{t,j}\Big)^{6-\gamma}\, dx\Big)\\
&=5k\int_{\Omega_1}U_{t,1}^4\Big(\sum_{j=2}^kU_{t,j}\Big)Z^{(0)}_{t\delta_\beta,\xi_{t,1}}\,dx+O(\delta_\beta^2)\,.
\end{split}
\end{equation*}
Note that in the last step we have used once again Lemma \ref{L.PremoselliVetois}. Now, let $r_0>0$ small enough such that $B_{r_0}(\xi_{t,1})\cap B_{r_0}(\xi_{t,j})=\emptyset$ for every $j\neq 1$. Then,
\begin{equation*}\begin{split}
& \int_{\Omega_1}U_{t,1}^4\Big(\sum_{j=2}^kU_{t,j}\Big)Z^{(0)}_{t\delta_\beta,\xi_{t,1}}\,dx=\int_{B_{r_0}(\xi_{t,1})}U_{t,1}^4\Big(\sum_{j=2}^kU_{t,j}\Big)Z^{(0)}_{t\delta_\beta,\xi_{t,1}}\,dx+O(\delta_\beta^3)\\
& \quad =t^{\frac12}\delta_\beta^{\frac12}\int_{B_{\frac{r_0}{\delta_\beta}}(0)}U^4(y)\Big(\sum_{j=2}^kU_{t,j}(\delta_\beta y+\xi_{t,1})\Big)Z^{(0)}(y)\,dy=t\delta_\beta\sum_{j=2}^k\frac{3^{\frac14}}{|\xi_{t,j}-\xi_{t,1}|}\int_{B_{\frac{r_0}{\delta_\beta}}(0)}U^4Z^{(0)}\,dy+O(\delta_\beta^2)\,,
\end{split}\end{equation*}
and thus
$$
I_1=\tilde{\mathtt c}_1 t\delta_\beta+O(\delta_\beta^2)\,.
$$

Next, by \eqref{E.boundZ} and Lemma \ref{L.PremoselliVetois}, we get that
\begin{equation*}\begin{split}
I_2&=\beta k\int_{\Omega_1}U^3V^2 Z_{k,t}\,dx=\beta k\int_{\Omega_1}U^3V^2Z^{(0)}_{t\delta_\beta,\xi_{t,1}}\,dx +O\bigg(|\beta|\sum_{\gamma=0}^2\int_{\Omega_1}U^3U_{t,1}^\gamma\Big(\sum_{j=2}^kU_{t,j}\Big)^{3-\gamma}\,dx\bigg)\\
&=\beta k\int_{\Omega_1}U^3U_{t,1}^2Z^{(0)}_{t\delta_\beta,\xi_{t,1}}\,dx +O\bigg(|\beta|\sum_{\gamma=0}^2\int_{\Omega_1}U^3U_{t,1}^\gamma\Big(\sum_{j=2}^kU_{t,j}\Big)^{3-\gamma}\,dx\bigg)\\
&=\beta k\int_{B_{r_0}(\xi_{t,1})}U^3U_{t,1}^2Z^{(0)}_{t\delta_\beta,\xi_{t,1}}\,dx +O\Big(|\beta|\delta_\beta^{\frac32}\Big)\,.
\end{split}\end{equation*}
On the other hand, using \cite[Equation (2.11)]{medinamusso2021}, we get that
\begin{equation*}\begin{split}
& \int_{B_{r_0}(\xi_{t,1})} U^3U_{t,1}^2Z^{(0)}_{t\delta_\beta,\xi_{t,1}}\,dx=\delta_\beta^{\frac32}\int_{B_{\frac{r_0}{\delta_\beta}}(0)}U^3(\delta_\beta y+\xi_{t,1})U^2(y)Z^{(0)}(y)\,dy\\
& \quad =\delta_\beta^{\frac32}U^3(\xi_{t,1})\bigg[\int_{B_{\frac{r_0}{\delta_\beta}}(0)}U^2(y)Z^{(0)}(y)\,dy+O(1)\bigg] =\delta_\beta^{\frac32}U^3(0,0,1)(1+O(\delta_\beta^2))\bigg[\frac{3^\frac34}{2}\int_{B_{\frac{r_0}{\delta_\beta}}(0)}\frac{dy}{(1+|y|^2)^{\frac32}}+O(1)\bigg]\,.
\end{split}\end{equation*}
and so, by \eqref{E.conclussionEEmain}, we conclude that
$$
I_2=\tilde{\mathtt c}_2 \beta (t\delta_\beta)^\frac32|\log\delta_\beta|+O\Big(|\beta| \delta_\beta^\frac32\Big)\,\,.
$$

Finally, using \eqref{E.boundZ} and \eqref{E.separation}, it is straightforward to check that
$$
|I_3|\leq \Big|\alpha\int_{\RR^3}V^3\Big(\sum_{r=2}^q V_r\Big)^3\,dx\Big| = O \big( \delta_\beta^3|\log\delta_\beta|\big)\,.
$$
Substituting the estimates for $I_i$, $i \in \{1,2,3\}$, into \eqref{E.projection3} the result follows.
\end{proof}

\subsection{Proof of Theorem \ref{T.mainResultmgeq3}} We now have all the needed ingredients to prove Theorem \ref{T.mainResultmgeq3}. Here, given a fixed integer $k \geq 2$, $0 < a < b$, $t \in [a,b]$, $\be_\star < - \sqrt{2}$ as in Proposition \ref{P.nonlinear3} and $\beta \in (-\infty, \be_\star]$, we let $(\phi,\psi) = (\phi[t],\psi[t])$ be the unique solution to \eqref{E.nonlinearProjected3} provided by Proposition \ref{P.nonlinear3}.

\begin{proof}[Proof of Theorem \ref{T.mainResultmgeq3}]
First of all, choosing $\delta_\beta$ such that
$$\delta_\beta^\frac12|\log\delta_\beta|=-\frac{1}{\beta}\,,$$
from Proposition \ref{P.reduction3}, it follows that
$$
\tilde{\cc}(t)=\Big(-\tilde{\mathtt c}_1t+\tilde{\mathtt c}_2 t^{\frac32}\Big) \delta_\beta+\theta_{3,\beta}(t)\,,
$$
with $\tilde{\cc}(t)$ defined in \eqref{E.proj3}. Hence, we can choose
$$\tilde{t}_{*,\be}= \Big(\frac{\tilde{\mathtt c}_1}{\tilde{\mathtt c}_2}\Big)^2+o_{\delta_\beta}(1)\,,$$
to have $\tilde{\cc}(\tilde{t}_{*,\be})=0$. Fixing $a,b$ such that $0<a<t_*<b$, we conclude that
\begin{equation*}
\mathcal L_2^*(\phi,\psi) - \tilde{\mathcal E}_2^* - \tilde{\mathcal N}_2^*(\phi,\psi)=0\,,
\end{equation*}
and therefore $(\phi_{\delta_\beta},\psi_{\delta_\beta}) = (\phi[\tilde{t}_{*,\be}],\psi[\tilde{t}_{*,\be}])$ solves \eqref{E.systemLyapunov-Schmidt3}.
\end{proof}

\section*{Acknowledgments}

A.J.F.  and M.M. are partially supported by the grant PID2023-149451NA-I00 of MCIN/AEI/10.13039/ 501100011033/FEDER, UE  and by Proyecto de Consolidaci\'on Investigadora, CNS2022-135640, MICINN (Spain).  M.M. is also supported by RYC2020-030410-I, MICINN (Spain). A.P. acknowledges support of INDAM-GNAMPA project “Problemi di doppia curvatura su variet`a a bordo e legami con le EDP di tipo ellittico” and of the project “Pattern formation in nonlinear phenomena” funded by the MUR Progetti di Ricerca di Rilevante Interesse Nazionale (PRIN) Bando 2022 grant 20227HX33Z.

\bibliography{system.bib}

\begin{thebibliography}{10}

\bibitem{AA1999}
N.~Akhmediev and A.~Ankiewicz.
\newblock Partially coherent solitons on a finite background.
\newblock {\em Phys. Rev. Lett.}, 82(13):2661--2664, 1999.

\bibitem{BiEg1991}
G.~Bianchi and H.~Egnell.
\newblock A note on the {S}obolev inequality.
\newblock {\em J. Funct. Anal.}, 100(1):18--24, 1991.

\bibitem{BN}
H.~Br\'{e}zis and L.~Nirenberg.
\newblock Positive solutions of nonlinear elliptic equations involving critical
  {S}obolev exponents.
\newblock {\em Comm. Pure Appl. Math.}, 36(4):437--477, 1983.

\bibitem{ChMePi2023}
H.~Chen, M.~Medina, and A.~Pistoia.
\newblock Segregated solutions for a critical elliptic system with a small
  interspecies repulsive force.
\newblock {\em J. Funct. Anal.}, 284(10):Paper No. 109882, 37, 2023.

\bibitem{CP2018}
M.~Clapp and A.~Pistoia.
\newblock Existence and phase separation of entire solutions to a pure critical
  competitive elliptic system.
\newblock {\em Calc. Var. Partial Differential Equations}, 57(1):Paper No. 23,
  20, 2018.

\bibitem{CPT2021}
M.~Clapp, A.~Pistoia, and H.~Tavares.
\newblock Yamabe systems, optimal partitions, and nodal solutions to the yamabe
  equation.
\newblock {\em J. Europ. Math. Soc., in press.}, arXiv: 2106.00579, 2021.

\bibitem{CSS2021}
M.~Clapp, A.~Salda\~{n}a, and A.~Szulkin.
\newblock Phase separation, optimal partitions, and nodal solutions to the
  {Y}amabe equation on the sphere.
\newblock {\em Int. Math. Res. Not. IMRN}, (5):3633--3652, 2021.

\bibitem{CTV2002}
M.~Conti, S.~Terracini, and G.~Verzini.
\newblock Nehari's problem and competing species systems.
\newblock {\em Ann. Inst. H. Poincar\'{e} C Anal. Non Lin\'{e}aire},
  19(6):871--888, 2002.

\bibitem{delpimupapi2011}
M.~del Pino, M.~Musso, F.~Pacard, and A.~Pistoia.
\newblock Large energy entire solutions for the {Y}amabe equation.
\newblock {\em J. Differential Equations}, 251(9):2568--2597, 2011.

\bibitem{delpimupapi2013}
M.~del Pino, M.~Musso, F.~Pacard, and A.~Pistoia.
\newblock Torus action on {$S^n$} and sign-changing solutions for conformally
  invariant equations.
\newblock {\em Ann. Sc. Norm. Super. Pisa Cl. Sci. (5)}, 12(1):209--237, 2013.

\bibitem{D1986}
W.~Y. Ding.
\newblock On a conformally invariant elliptic equation on {${\bf R}^n$}.
\newblock {\em Comm. Math. Phys.}, 107(2):331--335, 1986.

\bibitem{DH2009}
O.~Druet and E.~Hebey.
\newblock Stability for strongly coupled critical elliptic systems in a fully
  inhomogeneous medium.
\newblock {\em Anal. PDE}, 2(3):305--359, 2009.

\bibitem{GuoLiWei2014}
Y.~Guo, B.~Li, and J.~Wei.
\newblock Entire nonradial solutions for non-cooperative coupled elliptic
  system with critical exponents in {$\Bbb{R}^3$}.
\newblock {\em J. Differential Equations}, 256(10):3463--3495, 2014.

\bibitem{guoliu2008}
Y.~Guo and J.~Liu.
\newblock Liouville type theorems for positive solutions of elliptic system in
  {$\Bbb R^N$}.
\newblock {\em Comm. Partial Differential Equations}, 33(1-3):263--284, 2008.

\bibitem{medinamusso2021}
M.~Medina and M.~Musso.
\newblock Doubling nodal solutions to the {Y}amabe equation in {$\Bbb{R}^n$}
  with maximal rank.
\newblock {\em J. Math. Pures Appl. (9)}, 152:145--188, 2021.

\bibitem{MiP2004}
A.~M. Micheletti and A.~Pistoia.
\newblock On the effect of the domain geometry on the existence of sign
  changing solutions to elliptic problems with critical and supercritical
  growth.
\newblock {\em Nonlinearity}, 17(3):851--866, 2004.

\bibitem{MP2002}
M.~Musso and A.~Pistoia.
\newblock Multispike solutions for a nonlinear elliptic problem involving the
  critical {S}obolev exponent.
\newblock {\em Indiana Univ. Math. J.}, 51(3):541--579, 2002.

\bibitem{NTTV2010}
B.~Noris, H.~Tavares, S.~Terracini, and G.~Verzini.
\newblock Uniform {H}\"{o}lder bounds for nonlinear {S}chr\"{o}dinger systems
  with strong competition.
\newblock {\em Comm. Pure Appl. Math.}, 63(3):267--302, 2010.

\bibitem{PrVe2019}
B.~Premoselli and J.~V\'{e}tois.
\newblock Compactness of sign-changing solutions to scalar curvature-type
  equations with bounded negative part.
\newblock {\em J. Differential Equations}, 266(11):7416--7458, 2019.

\bibitem{Rey}
O.~Rey.
\newblock The role of the {G}reen's function in a nonlinear elliptic equation
  involving the critical {S}obolev exponent.
\newblock {\em J. Funct. Anal.}, 89(1):1--52, 1990.

\bibitem{STTZ2016}
N.~Soave, H.~Tavares, S.~Terracini, and A.~Zilio.
\newblock H\"{o}lder bounds and regularity of emerging free boundaries for
  strongly competing {S}chr\"{o}dinger equations with nontrivial grouping.
\newblock {\em Nonlinear Anal.}, 138:388--427, 2016.

\bibitem{WeiBN}
L.~Sun, J.~Wei, and W.~Yang.
\newblock On {B}rezis' first open problem: a complete solution.
\newblock {\em arXiv:2503.06904}, 2025.

\bibitem{TT2012}
H.~Tavares and S.~Terracini.
\newblock Sign-changing solutions of competition-diffusion elliptic systems and
  optimal partition problems.
\newblock {\em Ann. Inst. H. Poincar\'{e} C Anal. Non Lin\'{e}aire},
  29(2):279--300, 2012.

\bibitem{TTVW2011}
H.~Tavares, S.~Terracini, G.~Verzini, and T.~Weth.
\newblock Existence and nonexistence of entire solutions for non-cooperative
  cubic elliptic systems.
\newblock {\em Comm. Partial Differential Equations}, 36(11):1988--2010, 2011.

\bibitem{T1998}
E.~Timmermans.
\newblock Phase separation of bose–einstein condensates.
\newblock {\em Phys. Rev. Lett.}, 81(26):5718–5721, 1998.

\bibitem{weiyan2010}
J.~Wei and S.~Yan.
\newblock Infinitely many solutions for the prescribed scalar curvature problem
  on {$\Bbb S^N$}.
\newblock {\em J. Funct. Anal.}, 258(9):3048--3081, 2010.

\end{thebibliography}
\bibliographystyle{abbrv}

\vspace{0.5cm}

\end{document}